\documentclass{amsart}
\DeclareRobustCommand{\SkipTocEntry}[5]{} 
\usepackage{pdfsync} 
\usepackage{cancel}
\usepackage{ulem}
\usepackage{leftindex}
\usepackage{amsfonts,amsmath,amsthm,mathtools}
\newtheorem{theorem}{Theorem}
\numberwithin{theorem}{section}
\newtheorem{lemma}[theorem]{Lemma}
\newtheorem{proposition}[theorem]{Proposition}

\newtheorem{corollary}[theorem]{Corollary}
\theoremstyle{definition}

\newtheorem{remark}[theorem]{Remark}
\numberwithin{equation}{section}
\usepackage{amssymb}
\usepackage[colorinlistoftodos,prependcaption,textsize=tiny]{todonotes}
\usepackage[colorlinks,linkcolor={blue},citecolor={blue},urlcolor={purple},]{hyperref}\usepackage{graphicx}
\usepackage{csquotes}
\usepackage{wrapfig}
\usepackage{subfig}
\usepackage{epsfig}
\usepackage{float,mathrsfs}
\usepackage[shortlabels]{enumitem}
\usepackage{bbm}
\usepackage{todonotes}
\usepackage[markup=underlined]{changes}
\definechangesauthor[color=red]{Foivos}

\usepackage{algpseudocode}
\usepackage[font=scriptsize]{caption}
\counterwithout{figure}{section}
\usepackage{color}
\definecolor{refkey}{rgb}{0.9451,0.2706,0.4941}\definecolor{labelkey}{rgb}{0.9451,0.2706,0.4941}

\definecolor{darkred}{RGB}{139,0,0}
\definecolor{darkgreen}{RGB}{0,100,0}
\definecolor{darkmagenta}{RGB}{139,0,139}
\definecolor{gray}{RGB}{180,180,180}

\renewcommand{\epsilon}{\varepsilon}

\newcommand{\rd}{{\mathrm{d}}}

\newcommand{\bbN}{{\mathbb{N}}}
\newcommand{\bbR}{{\mathbb{R}}}

\newcommand{\Heff}{H_{\mathrm{eff}}}

\newcommand{\bbE}{\mathbb{E}}

\newcommand{\mask}[1]{{}}

\newtheorem{assumption}{Assumption}

\usepackage{pgfplots}
\pgfplotsset{compat=1.18}

\title{The stochastic
Landau--Lifshitz--Baryakhtar equation in critical spaces}

\author{F. Butori}
\address{Scuola Normale Superiore, Pisa, Italy}
\email{federico.butori@sns.it}

\author{F. Evangelopoulos-Ntemiris}
\address{Delft University of Technology, Delft, Netherlands}
\email{f.a.evangelopoulos-ntemiris@tudelft.nl}

\author{\\L. Marino}
\address{Scuola Normale Superiore, Pisa, Italy}
\email{lorenzo.marino@sns.it}

\author{M. Orteu-Capdevila}
\address{Freie Universit\"at Berlin, Berlin, Germany}
\email{m.orteu.capdevila@fu-berlin.de}

\allowdisplaybreaks

\begin{document}

\thanks{The second author has received funding from the VICI subsidy VI.C.212.027 of the Dutch Research Council (NWO). The fourth author acknowledges funding by the
Deutsche Forschungsgemeinschaft (DFG, German Research Foundation) – CRC/TRR 388
``Rough Analysis, Stochastic Dynamics and Related Fields" – Project ID 516748464}
\begin{abstract}
We study the stochastic
Landau--Lifshitz--Baryakhtar equation with multiplicative noise
and homogeneous Neumann boundary conditions on smooth bounded
domains in dimensions $d\le3$. Relying on stochastic maximal regularity results, we establish global well-posedness in a variational strong, intermediate, and weak setting, as well as for initial
data in the scaling-critical Besov spaces $B^{d/q-1}_{q,p}$
for every $q\in[2,\infty)$ and $p\in(2,\infty)$. 
\end{abstract}

\keywords{Stochastic Landau--Lifshitz--Baryakhtar equation, nonlinear SPDEs, global well-posedness, critical spaces, stochastic maximal regularity, energy estimates, rough initial data, multiplicative noise}
\subjclass[2020]{Primary: 60H15; Secondary: 35B65, 35K91, 35K35}

\maketitle
\setcounter{tocdepth}{2}
	\tableofcontents

\section{Introduction}
The Landau--Lifshitz--Baryakhtar equation (LLBar) \cite{Bar84, Bar89} is a continuum model used to describe the dynamics of magnetization in ferromagnetic materials. It generalizes both the Landau--Lifshitz--Gilbert equation (LLG) \cite{Gi55} and the Landau--Lifshitz--Bloch equation (LLB) \cite{Ga97}. The LLG equation assumes that the magnitude of the magnetization vector remains constant, a reasonable assumption only if the material is at very low temperatures. In contrast, the LLB equation allows the magnetization magnitude to vary, extending its applicability to higher-temperature regimes. The LLBar model further incorporates a nonlocal (exchange) damping contribution, commonly known as the Baryakhtar term. This additional term significantly improves the description of short-wavelength spin-wave propagation and lifetimes. In fact, it is the only model that aligns with certain experimental observations in micromagnetics concerning ultrafast magnetization dynamics at elevated temperatures. The existence, uniqueness, and regularity of solutions to the LLBar equation have been studied in \cite{SoeTran23}. 

From a physical perspective, in many practical applications the effective magnetic fields are inevitably influenced by environmental randomness, including thermal fluctuations, magnetic field variations, and external noise sources. These factors introduce stochasticity into the effective field and consequently affect its evolution. Motivated by this, it appears necessary to extend the LLBar model in order to incorporate random fluctuations of the effective field into the magnetization dynamics, yielding a stochastic version of the LLBar model. Such an approach has previously been considered in \cite{GoSoTr25, XuLiuZhang26, XuLiuZhang26_1}.

We can now present our model in more detail. Let $\mathscr D \subset \mathbb{R}^d$, $d\in \{1,2,3\}$, be a bounded, sufficiently smooth domain. The stochastic Landau--Lifshitz--Baryakhtar (sLLBar) equation describes the magnetization $u$ of the body $\mathscr D$ as:
\begin{equation}  \label{eq: LLB}      \tag{sLLBar}
\begin{aligned}
\begin{cases}
{\rm d}u = (\lambda_r \Heff(u) - \lambda_e\Delta  \Heff(u) - \gamma u\times  \Heff(u)) \, {\rm d}t + \sum_{k = 1}^\infty g_k(t,\omega, \cdot, u,\nabla u) \, {\rm d}W^k(t) \, \text{ in } \mathscr D, \\ 
        \partial_n u = 0, \partial_n \Heff(u)  = 0 \, \text{ on } \partial \mathscr D, \\
        u(0, \cdot) = u_0, \, \text{ in } \mathscr D.
    \end{cases}    
\end{aligned}
\end{equation}

Here, $ \Heff (u) \coloneq  \alpha\Delta u + \kappa_1u - \kappa_2|u|^2 u$ is the effective magnetic field, which is  derived from the variational derivative of the system's free energy; $- \gamma u\times  \Heff(u)$ describes the standard Larmor precession with electron gyromagnetic ratio $\gamma>0$; $\lambda_r \Heff(u)$ is the local (relativistic) longitudinal damping term; and $\lambda_e\Delta  \Heff(u)$ is usually called the Baryakhtar exchange damping term. Both parameters $\lambda_r$ and $\lambda_e$ are positive. Above, $\alpha>0$ represents the exchange stiffness, while $\kappa_1\in \mathbb{R}$ and $\kappa_2>0$ are phenomenological coefficients tied to the longitudinal magnetic susceptibility and the equilibrium magnetization magnitude. The phase transition of the material is dictated by the sign of the parameter $\kappa_1$, which changes depending on the temperature $T$ of the material, relative to the Curie temperature $T_C$. In the ferromagnetic state (when $T<T_C$), the material possesses a spontaneous macroscopic magnetization and $\kappa_1$ is positive. On the other hand, in the paramagnetic state (when $T>T_C$), thermal fluctuations completely overpower the exchange interactions, destroying the macroscopic magnetic ordering with the consequence that $\kappa_1$ becomes negative.

Let $(\Omega,\mathcal F,(\mathcal F_t)_{t\ge0},
\mathbb P)$ be a complete filtered probability space satisfying the usual
conditions. We assume that the family $\{W^k\}_{k\ge1}$ in \eqref{eq: LLB} is a sequence of independent standard real-valued $(\mathcal F_t)$-Wiener processes. The noise coefficient is
described by a progressively measurable map
\[
g\colon
\mathbb R_+\times\Omega\times\mathscr D
\times\mathbb R^3\times\mathbb R^{3\times d}
\longrightarrow
\ell^2(\mathbb N;\mathbb R^3),\]
subject to the setting-dependent regularity, Lipschitz, and integrability
conditions specified below in Section \ref{section:main results}.

Taking into account the definition of $ \Heff$, \eqref{eq: LLB} can be written as an evolution equation of the form 
\begin{equation}
     \label{eq: LLB1}
     \begin{cases}
    {\rm d}u + Au \, {\rm d} t 
        = F(u) \, {\rm d}t  + \sum_{k = 1}^\infty g_k(u) \, {\rm d}W^k(t), \quad t>0
     \\
     u(0)=u_0
     \end{cases}  
\end{equation}
where the linear operator $A$ and the nonlinearity $F$ are given by
\begin{equation}\label{eq:def of A and F}
    Au \coloneq \alpha_1\Delta^2 u +\alpha_2\Delta u +\alpha_3 u, \qquad F(u) \coloneq \gamma_1\Delta(|u|^2 u)+ \gamma_2\Delta u \times u-\gamma_3 |u|^2 u,
\end{equation}
with  $\alpha_1, \gamma_1,\gamma_2,\gamma_3>0$ positive constants and  $\alpha_2, \alpha_3 \in \mathbb{R}$.  

\addtocontents{toc}{\SkipTocEntry}
\subsection{Overview of previous results in the literature}

There is a handful of existing results in the literature addressing the local and global well-posedness of \eqref{eq: LLB}, most of them based on a Faedo--Galerkin approximation combined with tightness arguments. In this section, we provide a short overview of what has been shown so far.

The well-posedness and the existence of an invariant measure of \eqref{eq: LLB1} were investigated in \cite{GoSoTr25}, where the authors proved existence of global analytically strong solutions with paths in $C([0, T]; H^2)\cap L^2(0, T; H^4)$ in dimension $d=1,2,3$. 

In \cite{XuLiuZhang26_1}, the authors showed existence and uniqueness of a local maximal pathwise weak solution with paths in $L^{\infty}(0,T; H^1) \cap L^2(0,T; H^3)$ in $d=1,2,3$, as well as existence and uniqueness of global pathwise analytically weak solutions in $C([0, T]; H^1)\cap L^2(0, T; H^3) $ in $d=1$. Additionally, they prove existence and uniqueness of global pathwise very weak solutions, meaning analytically weak solutions with paths in $C([0, T]; L^2)\cap L^2(0, T; H^2)$, but only in $d=1,2$. For $d=3$, uniqueness remains open and the authors showed only existence of global martingale very weak solutions. 

With this work, we unify all the preceding cases in a single framework and extend some of the results to more general noise and/or higher dimension. We note that the setting in \cite{GoSoTr25} corresponds to our strong setting, while the path regularity of the weak and very weak solutions in \cite{XuLiuZhang26_1} corresponds to that of our intermediate and weak ones, respectively -- and which in our case are analytically strong. On top of that, thanks to the stochastic maximal regularity and $L^p(L^q)$-theory we are able to derive global existence and uniqueness also in the case of rough initial data coming from the scaling-critical trace space $B^{d/q-1}_{q,p}$.

We note that there has also been some work on the well-posedness of the stochastic LLBar equation with L\'evy noise, which we do not consider here. Indeed, in \cite{XuLiuZhang26} the authors prove existence and uniqueness of a probabilistically strong solution with paths in $\mathbb D([0,T]; H^1)\cap L^2(0,T;H^3)$ for dimensions $d = 1,2,3$, and establish a Freidlin--Wentzell type large deviations principle.

\addtocontents{toc}{\SkipTocEntry}
\subsection{Outline of the paper}

Section~\ref{Sec: Preliminaries} introduces the function spaces
associated with the Neumann realization of the fourth-order operator,
establishes its stochastic maximal regularity, and explains the scaling that leads to the critical Besov spaces 
$B^{d/q-1}_{q,p}$. 

Section~\ref{section:main results} states our global well-posedness
results under the corresponding assumptions on the noise. We first
consider three variational settings: the weak setting with initial
data in $L^2$ for $d\le2$, and the intermediate and strong settings
with initial data in $H^1$ and ${}_\nu H^2$, respectively, for
$d\le3$. We then treat initial data in
$B^{d/q-1}_{q,p}$ in $d\le 3$, for every $q\in[2,\infty)$ and $p\in(2,\infty)$,
as well as the endpoint $p=q=2$. These critical spaces include
distributions of negative regularity when $q>d$. The resulting
solutions regularize instantaneously at positive times.

Section \ref{section:proofs} contains the proofs. Local well-posedness follows from stochastic maximal regularity techniques and suitable local Lipschitz estimates for the drift and noise coefficients. 
In the weak setting, a coercivity estimate yields global
well-posedness directly. In the intermediate and strong settings, global well-posedness requires additional energy estimates. Finally, for initial data in the critical space $B^{d/q-1}_{q,p}$, we construct local
solutions by using stochastic maximal $L^p$-regularity techniques. 
Instantaneous regularization and uniqueness allow us to identify
the resulting local solutions, at positive times, with the global
solutions in the intermediate setting. The bounds for these
$H^1$ solutions then rule out finite-time blow-up and yield
global well-posedness.

\section{Preliminaries}
\label{Sec: Preliminaries}
\addtocontents{toc}{\SkipTocEntry}
\subsection{Notation}
 Throughout this paper, $C$ denotes a generic positive constant that may change from line to line. Given a set $D$ and functions $a,b\!: D \to \mathbb R$, we will use the standard notation $a\lesssim b$ in case there is a generic constant $C>0$ such that $a(x)\leq Cb(x)$ for all $x \in D$. Moreover, we write $a\eqsim b$ if $a\lesssim b$ and $b \lesssim a$. To stress the dependence of the general constant $C$ in some other variable, e.g. $T$, we will write $\lesssim_T$. For matrices $M,N\in\mathbb R^{3\times d}$, we write $
    M:N \coloneq \sum_{j=1}^d \sum_{i=1}^3 M_{i j}\cdot N_{i j}
    $
for the Frobenius product.

Let $E$ be a Banach space.
In the interest of brevity, the domain $\mathscr D \subset \bbR^d$ is omitted from the notation of function spaces when no confusion is likely to arise; e.g., $L^q(E)$ refers to $L^q(\mathscr D;E)$. Moreover, we let $\ell^2(E)\coloneq \ell^2(\bbN;E)$. In case $E=\bbR^3$, we simply write $L^q$ instead of $L^q(\bbR^3)$ and $\ell^2$ instead of $\ell^2(\bbR^3)$.

For a Banach couple $(X_0, X_1)$, we use 
$$ X_{\alpha,p} \coloneq (X_0, X_1)_{\alpha,p} \ \ \text{and} \ \ X_{\alpha} \coloneq [X_{0}, X_1]_{\alpha} , \ \ \ \alpha\in (0,1), \ p\in [1,\infty],$$
for the real and complex interpolation spaces, respectively. Moreover, for a Gelfand triple $\mathcal V \hookrightarrow \mathcal H \hookrightarrow \mathcal V^*$, we write 
$$ \mathcal V_{\alpha,p} \coloneq (\mathcal V^*,\mathcal V)_{\alpha,p}, \quad \mathcal V_{\alpha} \coloneq [\mathcal V^*,\mathcal V]_{\alpha} .$$

\addtocontents{toc}{\SkipTocEntry}
\subsection{Function spaces}
Let $A=\alpha_1\Delta^2+\alpha_2\Delta+\alpha_3$ be as in \eqref{eq:def of A and F}, where $\alpha_1>0$ and $\alpha_2,\alpha_3 \in \bbR$. Let $ q\in (1,\infty)$ and let 
$$A_q \colon D(A_q) \subseteq L^q \to L^q, \quad A_q(u)=Au$$
denote the $L^q$-realization of $A$ with domain
$$D(A_q) \coloneq \{u \in H^{4,q} \colon \partial_n u = \partial_n \Delta u =0\}.$$
By \cite[Theorem~2.3]{new_thoughts}, there exists $\lambda\geq0$ sufficiently large so that $\lambda+A_q$ has a bounded $H^\infty$-calculus of angle less than $\pi/2$. Hence, the interpolation-extrapolation scale generated by $A_q$ is well defined (cf.\,\cite[Appendix~A.1]{AgVe25} or \cite[Chapter V]{Amann}), which we denote by 
$$ ( {}_\nu  H^{s,q},A_{s,q})_{s \ge -4} .$$
The realization of $A$ on this scale can be defined by 
$$A_{s,q} \coloneq 
\begin{cases}
     {}_\nu  H^{s,q}-\text{realization of }A_q, &s\ge 0
    \\
    \text{closure of $A_q$ in $ {}_\nu H^{s,q}$}, & s \in [-4,0).
\end{cases}$$
In particular, $A_{0,q}=A_q$. Moreover, for $s<0$, one has  
$
     {}_\nu H^{s,q}
=
    (  {}_\nu  H^{-s,q'})^*$, where $\frac1q+\frac1{q'}=1 $; 
see \cite[Theorem~A.3]{AgVe25} for instance. One can show that $A_q$ and $A_{s,q}$ are similar, and thus, after a sufficiently large shift, the operator 
$$ A_{s,q} \colon D(A_{s,q}) \subseteq  {}_\nu  H^{s,q} \to  {}_\nu H^{s,q} , \quad D(A_{s,q})= {}_\nu  H^{s+4,q} $$
    has a bounded $H^\infty$-calculus of angle less than $\pi/2$.

For the notion of \textit{stochastic maximal $L^p_\kappa$-regularity}, $\mathcal {SMR}^\bullet_{p,\kappa}$ for short, the reader is referred  to \cite[Section 3]{AgVe25}.

\begin{lemma}[Stochastic maximal regularity]
\label{lemma:SMR}
Let $q\in[2,\infty)$, $s\ge -4$, and let $A_{s,q}$ denote the realization of $A$ on $ {}_\nu H^{s,q}$. Then,
$$
A_{s,q}\in\mathcal{SMR}^{\bullet}_{p,\kappa}, \quad \text{for every} \quad p \in (2,\infty), \, \kappa \in [0,p/2-1).
$$
Moreover, if $q=2$ then $A_{s,2}\in\mathcal{SMR}^{\bullet}_{2,0}$.
\end{lemma}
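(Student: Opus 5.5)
The plan is to reduce the claim to the known sufficient condition for stochastic maximal regularity: a bounded $H^\infty$-calculus of angle $<\pi/2$ on a space of type $L^q$ with $q\in[2,\infty)$, or isomorphic to a closed subspace of such a space. This is the van Neerven--Veraar--Weis theorem, in the weighted and time-independent form recorded in \cite[Section~3]{AgVe25}. It gives $\mathcal{SMR}^\bullet_{p,\kappa}$ for all $p\in(2,\infty)$, $\kappa\in[0,p/2-1)$, and, when $q=2$ (the Hilbert-space case), also $\mathcal{SMR}^\bullet_{2,0}$.

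One should first check that the shift is harmless. The $H^\infty$-calculus is available for $\lambda+A_{s,q}$ rather than for $A_{s,q}$ itself. By the definition of $\mathcal{SMR}^\bullet$ on finite time intervals in \cite{AgVe25}, the class is invariant under bounded perturbations, in particular $A\mapsto A+\lambda$: one conjugates by $e^{\lambda t}$, which is bounded with bounded inverse on $[0,T]$. So it suffices to treat $\lambda+A_{s,q}$.

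The main steps, in order, are as follows.

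\textbf{Step 1.} For $s=0$, we have ${}_\nu H^{0,q}=L^q(\mathscr D;\bbR^3)$. By \cite[Theorem~2.3]{new_thoughts}, $\lambda+A_q$ has a bounded $H^\infty$-calculus of angle $<\pi/2$. Since $L^q$ with $q\ge2$ is a UMD Banach function space of type $2$, the cited theorem gives the claim directly.

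\textbf{Step 2.} For general $s\ge-4$, we use the construction of the interpolation-extrapolation scale. The operator $(\lambda+A_q)^{s/4}$ is an isometric isomorphism from ${}_\nu H^{s,q}$ onto $L^q$, with ${}_\nu H^{s,q}$ equipped with the norm induced by fractional powers. It intertwines $\lambda+A_{s,q}$ with $\lambda+A_q$. For $s<0$, one takes the completion of $L^q$ and the closure of the operator.

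Consequently $\lambda+A_{s,q}$ is similar to $\lambda+A_q$ via an isomorphism $\Phi_s\colon {}_\nu H^{s,q}\to L^q$. The space ${}_\nu H^{s,q}$ is therefore isomorphic to $L^q$, so it inherits UMD and type $2$ with equivalent constants. The $H^\infty$-calculus transfers as well, since $f(\lambda+A_{s,q})=\Phi_s^{-1}f(\lambda+A_q)\Phi_s$.

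\textbf{Step 3.} Stochastic maximal regularity is invariant under such similarity. If $u$ solves the linear problem in ${}_\nu H^{s,q}$ with data $(f,g)$, then $\Phi_s u$ solves it in $L^q$ with data $(\Phi_s f,\Phi_s g)$. The $\gamma$-radonifying norms $\gamma(\ell^2,\cdot)$ are preserved up to constants under isomorphisms. Hence the estimates transfer, and Step 1 yields the result for all $s$. Alternatively, one can bypass similarity and apply the abstract theorem directly to ${}_\nu H^{s,q}$, using that it is isomorphic to $L^q$.

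\textbf{Step 4.} For $q=2$, all spaces ${}_\nu H^{s,2}$ are Hilbert spaces. The claim $\mathcal{SMR}^\bullet_{2,0}$ then follows from the classical Hilbertian $L^2$-theory, which rests on the boundedness of the $H^\infty$-calculus (equivalently, square-function estimates), as recorded in \cite[Section~3]{AgVe25}.

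The main obstacle I expect is the bookkeeping in Step 2 for $s<0$. One must verify that the extrapolated space, defined as a completion or as a dual $({}_\nu H^{-s,q'})^*$, is genuinely isomorphic to $L^q$ and not merely to $L^q$-like spaces in a weaker sense. One must also verify that the closure $A_{s,q}$ coincides with the similarity transform of $A_q$.

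I would handle this via Amann's abstract construction \cite[Chapter~V]{Amann}. There the extrapolated spaces are defined exactly so that $(\lambda+A)^{s/4}$ extends to an isometry onto $L^q$, and the identification with the dual is only a secondary description, which the proof does not need.
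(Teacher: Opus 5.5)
Your proposal is correct and follows essentially the same route as the paper. The paper uses the similarity of $A_{s,q}$ and $A_q$ (stated just before the lemma) to obtain a bounded $H^\infty$-calculus of angle $<\pi/2$ for $\lambda_0+A_{s,q}$ on ${}_\nu H^{s,q}$, and then applies \cite[Theorem~3.14]{AgVe25} directly on that space, which is your ``alternative'' in Step~3, including the Hilbert-space case $q=2$; your extra bookkeeping (the $e^{\lambda t}$ conjugation for the shift, and the transfer of the relevant space properties under the isomorphism with $L^q$) only makes explicit what the paper leaves implicit.
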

\begin{proof}
    By the above, there is $\lambda_0\ge0$ large enough so that $\lambda_0+A_{s,q}$ has a bounded $H^\infty$-calculus of angle less than $\pi/2$. Therefore,  \cite[Theorem~3.14]{AgVe25} implies the claim.
\end{proof}

 We note that the family of operators $(A_{s,q})$ is consistent, in the sense that for all $1<q_0,q_1<\infty$ and $s_0,s_1\ge -4$, 
 $$ A_{s_0,q_0} u = A_{s_1,q_1} u, \quad u \in  {}_\nu  H^{s_0+4,q_0} \cap  {}_\nu  H^{s_1+4,q_1}.$$
 We now present some standard results on interpolation with boundary conditions (see e.g.\,\cite{Seeley} or \cite{Guidetti}). For $s_0,s_1 \ge -4 , \, q \in (1,\infty)$ and $\theta \in (0,1)$,
 $$ [ {}_\nu H^{s_0,q},  {}_\nu H^{s_1,q}]_{\theta} =  {}_\nu H^{(1-\theta) s_0 +\theta s_1,q} .$$
Moreover, the following characterization holds
$$
      {}_\nu  H^{s,q}
    =
    \begin{cases}
        H^{s,q},
        & 0 \le s<1+\tfrac1q,
        \\[2mm]
        \left\{
            u\in H^{s,q}:
            \partial_n u=0
            \text{ on }\partial\mathscr D
        \right\},
        & 1+\tfrac1q<s<3+\tfrac 1q,
        \\[2mm]
        \{ u \in H^{s,q} \colon \partial_n u = \partial_n \Delta u = 0 \text{ on }\partial\mathscr D\}, & 3+\tfrac 1q<s\le 4.
    \end{cases}
$$
We define the scale of Besov spaces 
\[
     {}_\nu  B_{q,p}^{s}
     \coloneq 
    \bigl(
          {}_\nu  H^{s_0,q},
          {}_\nu  H^{s_1,q}
    \bigr)_{\theta,p},
\]
where $-4\le s_0<s_1\le 4$, $\theta \in (0,1)$ and $s=(1-\theta)s_0+\theta s_1$. Then, one has 
\[
     {}_\nu  B_{q,p}^{s}
    =
    \begin{cases}
        B_{q,p}^{s}(\mathscr D),
        & 0 \le s<1+\tfrac1q,\\[2mm]
        \{
            u\in B_{q,p}^{s}(\mathscr D):
            \partial_n u=0
            \text{ on }\partial\mathscr D
        \},
        & 1+\tfrac1q < s< 3+\tfrac 1q
        \\[2mm]
        \{
            u\in B_{q,p}^{s}(\mathscr D):
            \partial_n u=\partial_n \Delta u = 0
            \text{ on }\partial\mathscr D
        \}, & 3+\tfrac 1q<s<4.
    \end{cases}
    \]
For notational convenience, we omit the subscript $\nu$ in the norms
of these spaces, writing $\|\cdot\|_{H^{s,q}}$ and
$\|\cdot\|_{B^s_{q,p}}$ for the norms of ${}_\nu H^{s,q}$ and
${}_\nu B^s_{q,p}$, respectively.

\addtocontents{toc}{\SkipTocEntry}
\subsection{Scaling and criticality} Before stating our main results, we present a scaling argument for \eqref{eq: LLB1} on $\bbR^d$. 
Following the literature (see, e.g., \cite{AgVe25}), we say that a Banach space is \textit{critical} for \eqref{eq: LLB} if functions in this Banach space are invariant under the following map for the initial data
$$u_0 \mapsto u_{0,\lambda} \coloneq  \lambda^{1/4} u_0(\lambda^{1/4} \cdot ).$$
We have 
$$\|u_\lambda (t) \|_{L^q(\bbR^d;\bbR^3)} = \lambda^{\tfrac 14-\tfrac d{4q}} \|u(\lambda t)\|_{L^q(\bbR^d;\bbR^3)}, \qquad \|u_\lambda(t)\|_{\dot B^s_{q,p}(\bbR^d;\bbR^3)} \eqsim \lambda^{\tfrac 14+\tfrac 14 (s-\tfrac dq)} \|u(\lambda t)\|_{\dot B^s_{q,p}(\bbR^d;\bbR^3)}.$$
Thus the scaling exponent vanishes when $q=d$ in the Lebesgue scale
and when $s=d/q-1$ in the (homogeneous) Besov scale. Therefore, the scaling-critical spaces are $ \dot B^{d/q-1}_{q,p}(\bbR^d;\bbR^3)$ and $ L^d (\bbR^d;\bbR^3)$. 

\section{Main results}\label{section:main results}
In this section, we present the main results of our paper. We begin by studying the global well-posedness of \eqref{eq: LLB1} by means of a variational approach, with a Gelfand triple $\mathcal V \hookrightarrow \mathcal H \hookrightarrow \mathcal V^*$. First, we work in dimension $d\le 2$ with $\mathcal V= {}_\nu H^2$ and $\mathcal H=L^2$, which we call the weak setting; later, in dimension $d\le 3$ we work both with $\mathcal V =  {}_\nu H^3$ and $\mathcal H=H^1$ (intermediate setting), and $\mathcal V= {}_\nu H^4$ and $\mathcal H=  {}_\nu  H^2$ (strong setting). Lastly, we state a global well-posedness result for rough initial data from the scaling-critical trace space $B^{d/q-1}_{q,p}$.

\subsection{The variational setting}

\subsubsection{Weak setting}
 We first consider the weak setting with $\mathcal V =  {}_\nu H^2 = \{ u \in H^2 \colon \partial_n u = 0 \}$ and  $\mathcal H =L^2$ in dimension $d\le 2$. In this case, we will require the following assumptions on the noise:
\begin{assumption}[Noise assumptions in the weak setting]\label{ass:noise in weak setting} The function  $g=(g_k)_{k\ge1}\colon \bbR_+ \times \Omega \times \mathscr D\times \bbR^3\times  \bbR^{3d} \to \ell^2$ satisfies
    \begin{enumerate}[(1)]
        \item (Measurability)
        For every $(y,z)\in\mathbb R^3\times\mathbb R^{3\times d}$, the map
$
    (t,\omega,x)\longmapsto g(t,\omega,x,y,z)
$
is $\mathcal P\otimes\mathcal B(\mathscr D)$-measurable, where $\mathcal P$ is the progressive $\sigma$-algebra on $\bbR_+\times\Omega$;

\item (Integrability at the origin) For every $T>0$, the function $g^{(0)}(t,\omega)\coloneq g(t,\omega,\cdot,0,0)$ belongs to $L^2((0,T)\times\Omega; L^2(\ell^2))$, namely, 
\begin{equation}
\label{eq:g0-weak-integrability}
    \bbE \int_0^T
    \|g(t,\omega,\cdot,0,0)\|_{L^2(\ell^2)}^2
    \,\rd t
    \le C_T;
\end{equation}

    \item (Lipschitz in $u,\nabla u$) There is $L>0$ such that, uniformly in $(t,\omega,x)$, 
$y,y'\in\bbR^3$ and $z,z'\in \bbR^{3d}$,
    \begin{equation}\label{ineq:g is Lip}
        \|g(t,\omega,x,y,z ) - g(t,\omega,x,y',z') \|_{\ell^2} \le L |y-y'| + L |z-z'|.
    \end{equation}

\end{enumerate}
\end{assumption}

\begin{theorem}[Global well-posedness in the weak setting]\label{theorem:GWP_d=2_L2_time}
    Let $d\in\{1,2\}$ and suppose that Assumption \ref{ass:noise in weak setting} holds. Then, for any $u_0 \in L_{\mathcal F_0}^0(\Omega; L^2)$, there exists a unique global solution $u$ to \eqref{eq: LLB1} such that 
    \[
        u \in L_{\rm loc}^2([0,\infty);   {}_\nu  H^2)\cap  C([0,\infty); L^2) \, \text{ a.s.} 
    \]    
    Moreover, for every $T>0$ and $\gamma \in (0,1)$, 
    \begin{equation}\label{ineq:energy estimate weak 1}
        \begin{aligned}
            &\sup_{t \in [0,T]} \mathbb E  \| u(t) \|_{L^2}^2 +  \mathbb E  \int_0^T \| u(t) \|_{H^2}^2 \, {\rm d}t 
       \\
       & \qquad \leq C_T \Bigl( 1 + \mathbb{E} \| u_0 \|_{L^2}^2 + \mathbb{E}  \int_0^T \| g(t,\omega,\cdot,0,0) \|_{ L^2(\ell^2)}^{2} \, {\rm d}t  \Bigr),
        \end{aligned}
    \end{equation}
    and 
    \begin{equation} \label{ineq:energy estimate weak 2}
    \begin{aligned}  
        &\mathbb E  \sup_{t \in [0,T]}  \| u(t) \|_{L^2}^{2\gamma}  + \mathbb E  \Bigl ( \int_0^T \| u(t) \|_{H^2}^2 \, {\rm d}t \Bigr)^{\gamma}  
       \\
       & \qquad \leq C_{\gamma,T} \Bigl( 1 + \mathbb{E}  \| u_0 \|_{L^2}^{2\gamma}  +   \bbE \Bigl( \int_0^T
    \|g(t,\omega,\cdot,0,0)\|_{L^2(\ell^2)}^2 \,\rd t \Bigr)^\gamma
     \Bigr).
    \end{aligned}
    \end{equation}
\end{theorem}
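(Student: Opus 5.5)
The plan is to apply the abstract critical variational framework of Agresti and Veraar (the global well-posedness theorem for stochastic evolution equations in the critical variational setting) to \eqref{eq: LLB1} with the Gelfand triple
\[
\mathcal V={}_\nu H^2\hookrightarrow \mathcal H=L^2\hookrightarrow \mathcal V^*=({}_\nu H^2)^*={}_\nu H^{-2}.
\]
With this choice the complex interpolation spaces are $\mathcal V_\beta=[\mathcal V^*,\mathcal V]_\beta={}_\nu H^{4\beta-2}$. That framework yields existence and uniqueness of a global solution in $L^2_{\rm loc}([0,\infty);\mathcal V)\cap C([0,\infty);\mathcal H)$, together with exactly the two types of estimates \eqref{ineq:energy estimate weak 1} and \eqref{ineq:energy estimate weak 2} (the $\gamma\in(0,1)$ version being the one valid for merely $\mathcal F_0$-measurable data, obtained there via a stochastic Gronwall lemma and localization). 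It therefore suffices to verify three structural hypotheses: the linear part, local Lipschitz bounds for $F$ and $g$ with critical exponents, and a coercivity condition.

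\textbf{Step 1: the linear part and the noise.} The bilinear form $\langle Au,v\rangle=\alpha_1(\Delta u,\Delta v)-\alpha_2(\nabla u,\nabla v)+\alpha_3(u,v)$ on ${}_\nu H^2$ satisfies $\langle Au,u\rangle\ge \theta\|u\|_{H^2}^2-M\|u\|_{L^2}^2$. This follows from $\|u\|_{H^2}^2\eqsim\|\Delta u\|_{L^2}^2+\|u\|_{L^2}^2$ under Neumann conditions and the interpolation bound $\|\nabla u\|_{L^2}^2\le \epsilon\|\Delta u\|^2_{L^2}+C_\epsilon\|u\|^2_{L^2}$. For the noise, Assumption~\ref{ass:noise in weak setting} and \eqref{ineq:g is Lip} give
\[
\|g(u)-g(v)\|_{L^2(\ell^2)}\le L\|u-v\|_{H^1},\qquad \|g(u)\|_{L^2(\ell^2)}\le \|g^{(0)}\|_{L^2(\ell^2)}+L\|u\|_{H^1}.
\]
Since $H^1=\mathcal V_{3/4}$ lies strictly between $\mathcal H$ and $\mathcal V$, interpolation and Young's inequality absorb $\tfrac12\|g(u)\|^2$ into $\tfrac\theta2\|u\|_{H^2}^2$, up to $C(\|u\|_{L^2}^2+\|g^{(0)}\|^2)$.

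\textbf{Step 2: critical local Lipschitz estimates for $F$ (expected main obstacle).} I need bounds of the form
\[
\|F(u)-F(v)\|_{\mathcal V^*}\lesssim \textstyle\sum_j(1+\|u\|_{\mathcal V_{\beta_j}}^{\rho_j}+\|v\|_{\mathcal V_{\beta_j}}^{\rho_j})\|u-v\|_{\mathcal V_{\beta_j}},
\]
with the criticality restriction $(1+\rho_j)(2\beta_j-1)\le1$, in $d\le2$ (the case $d=2$ being the worst).

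For the cubic term, $\|\Delta(|u|^2u-|v|^2v)\|_{H^{-2}}\lesssim\||u|^2u-|v|^2v\|_{L^2}\lesssim(\|u\|_{L^6}^2+\|v\|_{L^6}^2)\|u-v\|_{L^6}$. With $H^{2/3}\hookrightarrow L^6$ in $d=2$, this gives $\beta=2/3$ and $\rho=2$, which is exactly critical: $3\cdot\tfrac13=1$. The term $|u|^2u$ without the Laplacian is easier.

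For the quadratic term $\Delta u\times u$, I write $\langle \Delta u\times u,\varphi\rangle=-\sum_i\int(\partial_iu\times u)\cdot\partial_i\varphi$, using $\partial_iu\times\partial_iu=0$ and the Neumann condition. This gives $\|\Delta u\times u\|_{H^{-2}}\lesssim\|u\,\nabla u\|_{H^{-1}}$. In $d=2$, $H^1\cdot L^2\hookrightarrow L^r$ for every $r<2$, and $L^r\hookrightarrow H^{-1}$ for $r>1$, so bilinearity gives
\[
\|\Delta u\times u-\Delta v\times v\|_{\mathcal V^*}\lesssim(\|u\|_{H^1}+\|v\|_{H^1})\|u-v\|_{H^1}.
\]
This corresponds to $\beta=3/4$ and $\rho=1$, again exactly critical: $2\cdot\tfrac12=1$. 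The case $d=1$ follows from the same computations with better embeddings. The delicate point is that both nonlinearities sit at the critical threshold, so the endpoint embeddings must be checked carefully; if one fails, I would split the product with a slightly different pair of Sobolev exponents.

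\textbf{Step 3: coercivity.} Pairing $F(u)$ with $u$ and integrating by parts under the Neumann conditions gives
\[
\langle F(u),u\rangle=-\gamma_1\!\int\!\bigl(|u|^2|\nabla u|^2+2\textstyle\sum_i(u\cdot\partial_iu)^2\bigr)-\gamma_3\!\int\!|u|^4\le 0,
\]
since $(\Delta u\times u)\cdot u=0$. Combining with Step 1 yields the coercivity condition
\[
\langle Au-F(u),u\rangle-\tfrac12\|g(u)\|^2_{L^2(\ell^2)}\ge\tfrac\theta2\|u\|_{H^2}^2-M'\|u\|^2_{L^2}-C\|g^{(0)}\|^2_{L^2(\ell^2)}.
\]
Measurability and integrability of $g^{(0)}$ are given by \eqref{eq:g0-weak-integrability}. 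The abstract theorem then delivers the global unique solution and the estimates \eqref{ineq:energy estimate weak 1} and \eqref{ineq:energy estimate weak 2}.
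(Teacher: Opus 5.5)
Your proposal is correct and follows essentially the same route as the paper. You verify linear coercivity, the critical local Lipschitz bounds ($\beta=2/3$, $\rho=2$ for the cubic terms; $\beta=3/4$, $\rho=1$ for $\Delta u\times u$ via $\Delta u\times u=\operatorname{div}(\nabla u\times u)$; $\beta=3/4$, $\rho=0$ for the noise) and the coercivity inequality from $\langle F(u),u\rangle\le 0$, and then conclude with the Agresti--Veraar global theorem in the critical variational setting (the paper cites \cite[Theorem 3.5]{AgVe24a}), which also supplies both energy estimates.
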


Furthermore, the following continuous dependence on the initial data holds.

\begin{corollary}\label{Coroll:GWP_d=2_L2_time}
Suppose that the assumptions of Theorem \ref{theorem:GWP_d=2_L2_time} hold.  If $u_0^n \in L^0_{\mathcal F_0} (\Omega;L^2)$ are such that $\|u_0-u^n_0\|_{L^2} \to 0$ in probability, then for every $T>0$,
    \begin{equation}
        \|u-u^n\|_{L^2(0,T;H^2)}+\|u-u^n\|_{ C([0,T];L^2)} \to 0 \quad \text{in probability},
    \end{equation}
    where $u^n$ is the unique global solution to \eqref{eq: LLB1} with initial data $u_0^n$.
\end{corollary}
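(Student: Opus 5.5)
The plan is to obtain the corollary from the abstract continuous-dependence theory for critical variational SPDEs, which is the same framework that gives global existence in Theorem \ref{theorem:GWP_d=2_L2_time}. The natural tool is \cite{AgVe25} (and its variational counterpart by Agresti--Veraar on coercive semilinear SPDEs). There one shows that if an equation ${\rm d}u + Au\,{\rm d}t = F(u)\,{\rm d}t + G(u)\,{\rm d}W$ is set in a Gelfand triple, is locally well posed, and satisfies a coercivity condition, then the map $u_0\mapsto u$ is continuous. The continuity is from $L^0_{\mathcal F_0}(\Omega;\mathcal H)$ into $L^0(\Omega;L^2(0,T;\mathcal V)\cap C([0,T];\mathcal H))$, with respect to convergence in probability.

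\textbf{Step 1.} I reuse the structure verified in the proof of Theorem \ref{theorem:GWP_d=2_L2_time}, with $\mathcal V={}_\nu H^2$, $\mathcal H=L^2$ and $\mathcal V^*={}_\nu H^{-2}$. The required facts are:
\begin{itemize}
\item $A_{-2,2}\in\mathcal{SMR}^\bullet_{2,0}$, by Lemma \ref{lemma:SMR};
\item $F\colon {}_\nu H^{2}\to {}_\nu H^{-2}$ and $G\colon {}_\nu H^2\to L^2(\ell^2)$ satisfy the local Lipschitz estimates with critical/subcritical exponents, using Sobolev embeddings in $d\le 2$;
\item the coercivity bound $\langle Au - F(u),u\rangle - \tfrac12\|G(u)\|^2_{L^2(\ell^2)} \ge \theta\|u\|^2_{H^2} - C(1+\|u\|_{L^2}^2) - \|g^{(0)}\|^2$.
\end{itemize}
Once these hypotheses are in place, the abstract theorem gives the continuity statement directly. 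This is the route I would take first.

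\textbf{Step 2 (fallback by hand).} If the abstract result is not available in the needed form, I would argue directly. First, the a priori bounds \eqref{ineq:energy estimate weak 2}, applied uniformly to $u^n$ and $u$ (with $\gamma<1$, so only convergence in probability of the data is needed), give tightness-type control: for every $\epsilon>0$ there is $R$ with $\mathbb P(\|u\|_{L^2(0,T;H^2)\cap C(L^2)}+\|u^n\|_{\cdots}>R)<\epsilon$ uniformly in $n$. Next I localize with the stopping times $\tau_R^n=\inf\{t:\|u\|_{C([0,t];L^2)}^2+\|u\|_{L^2(0,t;H^2)}^2+(\text{same for }u^n)\ge R\}$ and first reduce to $\|u_0\|_{L^2},\|u^n_0\|_{L^2}\le R$ by cutting on $\mathcal F_0$-sets.

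On $[0,\tau^n_R]$, apply Itô's formula to $\|u-u^n\|_{L^2}^2$. The linear part gives $-2\alpha_1\|\Delta(u-u^n)\|^2$ plus lower-order terms. The nonlinear differences are handled as follows:
\begin{itemize}
\item $\langle\Delta(|u|^2u-|u^n|^2u^n),u-u^n\rangle$ is bounded via Gagliardo--Nirenberg in $d\le2$ by $\epsilon\|u-u^n\|_{H^2}^2+C(\|u\|^2_{H^2}+\|u^n\|_{H^2}^2)(1+\ldots)\|u-u^n\|^2_{L^2}$;
\item the precession term is treated similarly;
\item the noise difference is bounded by $L^2(\|w\|_{L^2}^2+\|\nabla w\|_{L^2}^2)\le \epsilon\|w\|_{H^2}^2+C\|w\|_{L^2}^2$, where $w=u-u^n$.
\end{itemize}
A stochastic Gronwall lemma, whose weight $\int_0^{\tau}(\|u\|_{H^2}^2+\|u^n\|^2_{H^2})$ is bounded by $R$ before $\tau^n_R$, then gives $\mathbb E\sup_{t\le\tau^n_R}\|w\|_{L^2}^2+\mathbb E\int_0^{\tau^n_R}\|w\|^2_{H^2}\lesssim_R \mathbb E\|u_0-u^n_0\|^2_{L^2}\mathbf 1\to0$. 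Combining this with $\mathbb P(\tau^n_R<T)<\epsilon$ yields the claim.

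\textbf{Main obstacle.} The hardest step is the cubic term $\Delta(|u|^2u)$ in the difference estimate, since it sits at critical order in the $L^2$/${}_\nu H^2$ setting. I would bound $\||u|^2u-|u^n|^2u^n\|_{L^2}\lesssim (\|u\|_{L^\infty}^2+\|u^n\|_{L^\infty}^2)\|w\|_{L^2}$ and use $\|u\|_{L^\infty}^2\lesssim\|u\|_{L^2}\|u\|_{H^2}$ in $d\le2$. Then Young's inequality gives a weight in $L^1_t$ on the localized event, which is exactly what the stochastic Gronwall lemma requires.
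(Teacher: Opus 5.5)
Your Step 1 is essentially the paper's argument. The paper's proof is the one-line citation of \cite[Theorem~3.8]{AgVe24a}, the continuous-dependence result in the critical variational setting. Its hypotheses are the local Lipschitz and criticality estimates verified in Proposition~\ref{prop:LWP in weak setting} and the coercivity bound \eqref{ineq: coercivity for GWP2} from the proof of Theorem~\ref{theorem:GWP_d=2_L2_time}.

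Your by-hand fallback also appears to work, though it is not needed. It localizes on $\mathcal F_0$-sets and with stopping times, applies It\^o's formula to $\|u-u^n\|_{L^2}^2$, and uses Agmon's inequality $\|u\|_{L^\infty}^2\lesssim\|u\|_{L^2}\|u\|_{H^2}$ in $d\le 2$ together with a Gronwall argument whose weight is pathwise bounded on the stopped interval.
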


\begin{remark}[Parabolic regularization]\label{remark:parabolic reg weak setting}
    By replacing \eqref{eq:g0-weak-integrability} with the stronger assumption 
    $$g^{(0)} \in L^\infty((0,T)\times \Omega ; L^{q_0}(\ell^2)) ,\quad \text{ for every $T>0$}, \quad \text{ for some $q_0 \in [2,\infty)$},$$
one can show that the following instantaneous regularization holds almost surely:
    \begin{equation*}
        u \in H^{\theta,r}_{\mathrm{loc}}((0,\infty); {}_\nu H^{2-4\theta,q}), \quad 0 \le \theta<1/2, \quad r \in [2,\infty), \quad q \in [2,q_0].
        \end{equation*}
We leave the details to the interested reader, as the proof is analogous to that of Lemma \ref{lemma:parab reg variational} and we will not pursue this extension further here.
\end{remark}

The proofs of Theorem \ref{theorem:GWP_d=2_L2_time} and Corollary \ref{Coroll:GWP_d=2_L2_time} are presented in Section \ref{subsec: proofs weak setting}. As they will reveal, the weak setting is critical in dimension $d = 2$. Consequently, to treat the higher dimension $d=3$, it is necessary to work in stronger function spaces. This motivates the intermediate and strong settings introduced in the following two sections.

\subsubsection{Intermediate setting}
We consider the intermediate setting with 
$\mathcal V =  {}_\nu H^3 = \{u\in H^3: \partial_n u=0 \text{ on }\partial\mathscr D\}$ 
and $\mathcal H =H^1$. In this case, we can prove global well-posedness in  dimension $d\le 3$. In order to do so, we need higher-order regularity on $g$; specifically, we assume that $g$ and its first order derivatives are Lipschitz continuous. Moreover, contrary to the weak setting, we do not allow dependence on the gradient variable.

\begin{assumption}[Noise assumptions in the intermediate setting]\label{ass:noise_intermediate} Suppose that the function  $g=(g_k)_{k\ge1}\colon \bbR_+\times\Omega\times\mathscr D\times\bbR^3 \to \ell^2$ is independent of the gradient variable. Moreover, 
    \begin{enumerate}
    \item[(1)] (Measurability) For every $y\in\bbR^3$, the map $(t,\omega,x)\mapsto g(t,\omega,x,y)$ is
$\mathcal P\otimes \mathcal B(\mathscr D)$-measurable, where $\mathcal P$ is the progressive $\sigma$-algebra on $\bbR_+\times\Omega$;

\item [(2)]  (Integrability at the origin) For every $T>0$, the function $g^{(0)}(t,\omega)\coloneq g(t,\omega,\cdot,0)$ belongs to 
 $L^\infty((0,T)\times\Omega; W^{1,\infty}(\ell^2))$,
namely, there is $K_T>0$ such that, 
\begin{equation}\label{eq:g0-assumption_intermediate}
\|g(t,\omega,\cdot,0)\|_{W^{1,\infty}(\ell^2)}\le K_T \quad \text{for a.e.
$(t,\omega)$};
\end{equation}
    \item[(3)] (Lipschitz in $u$) The map $(x,y)\mapsto g(t,\omega,x,y)$ is continuously differentiable for a.e.
$(t,\omega)$, and there is $L>0$ such that, uniformly in $(t,\omega,x)$ and
$y,y'\in\bbR^3$,
\begin{align}
\|g(t,\omega,x,y)-g(t,\omega,x,y')\|_{\ell^2}
&\le L|y-y'|,\label{eq:g-Lip}\\
\|\nabla_x g(t,\omega,x,y)-\nabla_x g(t,\omega,x,y')\|_{\ell^2}
&\le L|y-y'|,\label{eq:gx-Lip}\\
\|\partial_y g(t,\omega,x,y)\|_{\ell^2}
&\le L,\label{eq:gy-bdd}\\
\|\partial_y g(t,\omega,x,y)-\partial_y g(t,\omega,x,y')\|_{\ell^2}
&\le L|y-y'|.\label{eq:gy-Lip}
\end{align}
\end{enumerate}
\end{assumption}

\begin{theorem}[Global well-posedness in the intermediate setting]\label{theorem:GWP_intermediate}
    Let $d\in\{1,2,3\}$ and suppose that Assumption \ref{ass:noise_intermediate} holds. Then, for any $u_0 \in L_{\mathcal F_0}^{0}(\Omega; H^1)$, there exists a unique global solution $u$ to equation \eqref{eq: LLB1} such that 
    \[
        u \in L_{\rm loc}^2([0,\infty);   {}_\nu  H^3)\cap  C([0,\infty); H^1) \, \text{ a.s.}
    \]
    Moreover, the following instantaneous regularization holds almost surely:
     \begin{equation}\label{eq:parabolic reg of intermediate sol0}
        u \in H^{\theta,r}_{\mathrm{loc}}((0,\infty); {}_\nu H^{3-4\theta,q}) , \quad 0\le \theta<1/2, \quad r,q \in [2,\infty).
    \end{equation}
   In addition, if  $u_0 \in L^{8}_{\mathcal F_0}(\Omega;H^1) $, then for every $T>0$, 
   \begin{align}\label{eq:intermediate-main-energy-estimate 2}
\mathbb E 
    \sup_{t\in[0,T]}
    \|u(t)\|_{H^1}^2
+
\mathbb E\int_0^T
    \|u(t)\|_{H^3}^2
\,\rd t
\le
C_{T}\left(
    1+\mathbb E\|u_0\|_{H^1}^{8}
\right).
\end{align}
\end{theorem}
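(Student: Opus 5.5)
The proof follows the critical variational framework of Agresti--Veraar: local well-posedness by stochastic maximal regularity, global existence via a blow-up criterion and energy estimates, and regularization by bootstrapping. Throughout, we work with the Gelfand triple $\mathcal V={}_\nu H^3\hookrightarrow \mathcal H=H^1\hookrightarrow \mathcal V^*={}_\nu H^{-1}$ and the operator $A_{-1,2}$, which lies in $\mathcal{SMR}^\bullet_{2,0}$ by Lemma \ref{lemma:SMR}.

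\textbf{Local well-posedness.} We verify the local Lipschitz conditions for $F\colon {}_\nu H^3\to {}_\nu H^{-1}$ and $g\colon {}_\nu H^3\to H^1(\ell^2)$ in critical form:
\[
\|F(u)-F(v)\|_{H^{-1}}\lesssim (1+\|u\|_{\beta}^{2}+\|v\|_{\beta}^2)\|u-v\|_{\beta},
\]
with $\beta=\mathcal V_{\beta}$-type interpolation exponents satisfying $\rho_j\ge0$ and $(1+\rho_j)(2\beta_j-1)\le1$.

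\emph{Cubic term.} For $\Delta(|u|^2u)$ we need $|u|^2u\in H^1$, which by Sobolev embeddings in $d\le3$ requires $u\in H^{s}$ with $s=\tfrac{4}{3}$ (cubic, $d=3$). This places $u$ at level $\beta$ with $3\cdot(2\beta-1)\le1$, i.e. it is critical or subcritical.

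\emph{Cross term.} For $\Delta u\times u=\operatorname{div}(\nabla u\times u)$ we need $\nabla u\times u\in L^2$, which is bilinear and is handled by $H^{3/2}$-type bounds.

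\emph{Noise.} $g$ is globally Lipschitz from $H^1$ to $H^1(\ell^2)$ by the chain rule together with \eqref{eq:g-Lip}--\eqref{eq:gy-Lip} and $g^{(0)}\in W^{1,\infty}$. The only term that is not globally Lipschitz is $\partial_y g(u)\nabla u-\partial_y g(v)\nabla v$, which we treat with \eqref{eq:gy-Lip} and an $L^\infty$-type interpolation bound.

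This yields a unique maximal local solution $(u,\sigma)$ with $u\in L^2_{\rm loc}([0,\sigma);{}_\nu H^3)\cap C([0,\sigma);H^1)$, together with the blow-up criterion
\[
\mathbb P\Bigl(\sigma<T,\ \sup_{t<\sigma}\|u\|_{H^1}+\|u\|_{L^2(0,\sigma;H^3)}<\infty\Bigr)=0.
\]

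\textbf{Global existence: energy estimates.} This is the main obstacle. We proceed in two steps.

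\emph{Step 1 ($L^2$ estimate).} Apply It\^o's formula to $\|u\|_{L^2}^2$, for which
\[
(\gamma_1\Delta(|u|^2u),u)=-\gamma_1\int\bigl(|u|^2|\nabla u|^2+2|u\cdot\nabla u|^2\bigr)\le0,
\qquad (\Delta u\times u,u)=0,
\]
and $-\gamma_3|u|^4\le 0$. This gives $u\in L^\infty(L^2)\cap L^2(H^2)$, with bounds in $L^{p}(\Omega)$ for any $p$ if $u_0$ has moments. We use stopping times and an $L^p$-moment version of It\^o's formula.

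\emph{Step 2 ($H^1$ estimate).} Apply It\^o's formula to $\|\nabla u\|_{L^2}^2$, i.e. test with $-\Delta u$. The cubic term gives $-\gamma_1(\nabla(|u|^2u),\nabla\Delta u)$. We bound it by $\epsilon\|u\|_{H^3}^2+C\|\nabla(|u|^2u)\|_{L^2}^2$, and then control
\[
\|u\|_{L^\infty}^4\|\nabla u\|^2_{L^2}\lesssim \|u\|_{L^\infty}^4\|\nabla u\|_{L^2}^2,
\]
using Gagliardo--Nirenberg in $d\le3$ between $L^2$ and $H^3$, or more cleanly $\|u\|_{L^\infty}^2\lesssim \|u\|_{H^1}\|u\|_{H^2}$. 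After Gronwall this produces a random exponent $\int_0^T\|u\|_{H^2}^2$, which we handle by the stochastic Gronwall lemma. The precession term is $(\nabla(\Delta u\times u),\nabla\Delta u)=(\Delta u\times\nabla u,\nabla\Delta u)$, bounded by $\|\nabla\Delta u\|\,\|\Delta u\|_{L^3}\|\nabla u\|_{L^6}$, and it is treated the same way by interpolation. The It\^o correction terms are controlled by the Lipschitz assumptions. These estimates give the sample path bounds needed in the blow-up criterion, so $\sigma=\infty$.

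\emph{Moments.} Estimate \eqref{eq:intermediate-main-energy-estimate 2} with the eighth moment arises by Cauchy--Schwarz: the Gronwall factor involves $\int\|u\|_{L^\infty}^4\cdots$, and is bounded using the high-moment $L^2$-level bounds from Step 1 (moment $8$ needed for the quartic growth) instead of the stochastic Gronwall lemma.

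\textbf{Regularization.} To obtain \eqref{eq:parabolic reg of intermediate sol0}, we bootstrap using the $L^p_\kappa$-regularity of Lemma \ref{lemma:SMR} in spaces ${}_\nu H^{-1,q}$ with weights $\kappa$, combined with the instantaneous regularization results of Agresti--Veraar. At each step we raise $p$ and $q$ through Sobolev embeddings, checking the nonlinear estimates at each level, and conclude by uniqueness that the solutions coincide.
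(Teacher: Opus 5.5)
\textbf{Gap: Step 2 of your energy estimate does not close in $d=3$.} Your local well-posedness argument and your $L^2$-estimate essentially agree with the paper. The problem is Step 2, and $d=3$ is the main case of the theorem.

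Testing with $-\Delta u$ leaves the term $\gamma_1\langle\nabla(|u|^2u),\nabla\Delta u\rangle$, which has no sign. After Young's inequality you must control $\|u\|_{L^\infty}^4\|\nabla u\|_{L^2}^2$. Using $\|u\|_{L^\infty}^2\lesssim\|u\|_{H^1}\|u\|_{H^2}$, this is $\lesssim\|u\|_{H^2}^2\,y^2$ with $y=\|u\|_{H^1}^2$. So the Gronwall coefficient is $\|u\|_{H^1}^2\|u\|_{H^2}^2$, not $\|u\|_{H^2}^2$, and it is not known to be integrable in time: via $\|u\|_{H^1}^2\lesssim\|u\|_{L^2}\|u\|_{H^2}$ you only get $L^{2/3}_t$. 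What you actually have is a Riccati-type inequality $y'\lesssim a(t)y^2$.

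No choice of interpolation exponents repairs this. Under the scaling $u\mapsto\lambda^{1/4}u(\lambda\,\cdot,\lambda^{1/4}\cdot)$, the Step 1 quantities $\sup_t\|u\|_{L^2}^2$ and $\int\|u\|_{H^2}^2\,\rd t$ are supercritical in $d=3$ (the critical space is $H^{1/2}$), just as for the enstrophy estimate of 3D Navier--Stokes. In $d\le2$, where $\|u\|_{L^\infty}^2\lesssim\|u\|_{L^2}\|u\|_{H^2}$, the coefficient becomes $\|u\|_{L^2}^2\|u\|_{H^2}^2$ and your argument would at least give global existence.

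Two further points in this step:
\begin{itemize}
\item The stochastic Gronwall lemma does not remove the superlinearity.
\item Your explanation of the eighth moment does not work: polynomial moments of $L^2$-level quantities cannot control an exponential Gronwall factor.
\end{itemize}
Also, the precession term needs no estimate at this level, since $(\Delta u\times u)\cdot\Delta u=0$ pointwise. The expression $(\Delta u\times\nabla u,\nabla\Delta u)$ you wrote is what appears when testing with $\Delta^2u$, i.e.\ at the $H^2$ level.

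\textbf{Missing idea: the Lyapunov structure of the LLBar drift.} The paper applies It\^o's formula to the Ginzburg--Landau energy $\mathscr E(u)=K+\frac\alpha2\|\nabla u\|_{L^2}^2+\frac{\kappa_2}4\|u\|_{L^4}^4-\frac{\kappa_1}2\|u\|_{L^2}^2$, whose derivative is $-\Heff(u)$. The drift is $\lambda_r\Heff-\lambda_e\Delta\Heff-\gamma u\times\Heff$, so its contribution is exactly $-\lambda_r\|\Heff(u)\|_{L^2}^2-\lambda_e\|\nabla\Heff(u)\|_{L^2}^2$. The bad cross term is cancelled by the drift contribution of the $L^4$-part, and the precession drops out because $(u\times\Heff)\cdot\Heff=0$. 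The It\^o correction is bounded by $C_T\mathscr E(u)$, so Gronwall is linear with a deterministic constant. This gives moments of $\sup_t\mathscr E(u)$ and of $\int\|\Heff(u)\|_{H^1}^2\,\rd t$.

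An elliptic estimate then recovers the $H^3$ norm: testing $-\alpha\Delta v+\kappa_2|v|^2v=\kappa_1v-\Heff(v)$ with $|v|^4v$ gives $\|u\|_{H^3}^2\lesssim1+\|u\|_{H^1}^4+(1+\|u\|_{H^1}^2)\|\Heff(u)\|_{H^1}^2$. The eighth moment comes from handling $\mathbb E\bigl[\sup_t\|u\|_{H^1}^2\int\|\Heff(u)\|_{H^1}^2\,\rd t\bigr]$ by Cauchy--Schwarz. That needs the energy estimate with $p=2$, hence $\mathbb E\mathscr E(u_0)^2\lesssim1+\mathbb E\|u_0\|_{H^1}^8$.

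Finally, the It\^o formula for $\|u\|_{L^4}^4$ is not covered by the variational It\^o formula. The paper justifies it using the parabolic regularization $u\in L^6(\delta,T;H^{3,4})$ for $\delta>0$, so the regularization result must be proved before the energy estimate, not after it as in your plan.
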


Furthermore, the following continuous dependence on the initial data holds.

\begin{corollary}\label{Coroll:GWP_intermediate}
Suppose that the assumptions of Theorem \ref{theorem:GWP_intermediate} hold.  If $u_0^n \in L^0_{\mathcal F_0} (\Omega;H^1)$ are such that $\|u_0-u^n_0\|_{H^1} \to 0$ in probability, then for every $T>0$,
    \begin{equation}
        \|u-u^n\|_{L^2(0,T;H^3)}+\|u-u^n\|_{ C([0,T];H^1)} \to 0 \quad \text{in probability as } n \to \infty,
    \end{equation}
    where $u^n$ is the unique global solution to \eqref{eq: LLB1} with initial data $u_0^n$. 
\end{corollary}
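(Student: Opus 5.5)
We deduce the continuous dependence in Corollary \ref{Coroll:GWP_intermediate} from the local continuous dependence of the critical variational framework, combined with the global existence and the energy bounds of Theorem \ref{theorem:GWP_intermediate}. The framework is the stochastic maximal regularity theory of \cite{AgVe25} applied to the Gelfand triple $\mathcal V={}_\nu H^3\hookrightarrow \mathcal H=H^1\hookrightarrow \mathcal V^*={}_\nu H^{-1}$. Lemma \ref{lemma:SMR} with $q=2$ gives $A_{-1,2}\in\mathcal{SMR}^\bullet_{2,0}$. The local Lipschitz estimates for $F$ from ${}_\nu H^{3-}$-type intermediate spaces into ${}_\nu H^{-1}$, and for $g$ into $H^1(\ell^2)$, are the same ones used in the proof of Theorem \ref{theorem:GWP_intermediate}. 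In that setting the abstract theory yields a local continuity statement. Let $u^n$ be the solution with data $u_0^n$, let $\sigma$ be the explosion time of $u$, and fix a stopping time $\tau<\sigma$ with $u$ bounded in $C([0,\tau];H^1)\cap L^2(0,\tau;H^3)$. Then there are stopping times $\tau_n\to\tau$ in probability such that $u^n$ exists on $[0,\tau_n]$ and
$$\|u-u^n\|_{L^2(0,\tau_n;H^3)}+\|u-u^n\|_{C([0,\tau_n];H^1)}\to 0 \quad \text{in probability}.$$
This is the analogue of \cite[Theorem~4.? / Proposition on continuous dependence]{AgVe25}. I would quote it rather than reprove it, after a localization on $\{\|u_0\|_{H^1}\le R\}$ that reduces to bounded data.

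Next we globalize. Fix $T>0$ and $\varepsilon>0$. By Theorem \ref{theorem:GWP_intermediate}, $u$ is global, so $\sigma=\infty$ a.s. Hence we may pick $R$ large so that the stopping time
$$\tau^R\coloneq\inf\{t\le T:\|u(t)\|_{H^1}+\|u\|_{L^2(0,t;H^3)}\ge R\}$$
equals $T$ with probability at least $1-\varepsilon$. Apply the local result with $\tau=\tau^R$. The main obstacle is to show that $\mathbb P(\tau_n<\tau^R)\to0$, i.e.\ that the approximating time does not degenerate before $\tau^R$. In the abstract result $\tau_n$ is defined as the first time $\|u-u^n\|$, measured in the maximal-regularity norm, exceeds a small threshold $\delta$, or $\tau^R\wedge\sigma_n$. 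On $\{\tau_n<\tau^R\}$ the difference must therefore reach $\delta$ at time $\tau_n$, unless $u^n$ blows up first. Blow-up of $u^n$ before $T$ has probability zero, because each $u^n$ is global by Theorem \ref{theorem:GWP_intermediate}. Moreover, near $\tau_n$ the solution $u^n$ stays within $\delta$ of $u$, so its norm is at most $R+\delta$. The convergence in probability of the difference up to $\tau_n$ then forces $\mathbb P(\tau_n<\tau^R)\to 0$.

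Finally we combine the two steps. We have
$$\mathbb P\bigl(\|u-u^n\|_{L^2(0,T;H^3)}+\|u-u^n\|_{C([0,T];H^1)}>\eta\bigr)\le \mathbb P(\tau^R<T)+\mathbb P(\tau_n<\tau^R)+\mathbb P\bigl(\|u-u^n\|_{\tau_n}>\eta\bigr),$$
where $\|\cdot\|_{\tau_n}$ denotes the norm on $[0,\tau_n]$. The first term is at most $\varepsilon$ and the other two tend to zero, which proves the corollary. If the abstract statement is not directly available, the fallback is a hands-on estimate. Apply It\^o's formula to $\|u-u^n\|_{H^1}^2$ up to the stopping time at which both $u$ and $u^n$ have norm at most $R$. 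The nonlinear differences $F(u)-F(u^n)$ are bounded in ${}_\nu H^{-1}$ via the local Lipschitz estimates with constants depending on $R$, absorbed into $\|u-u^n\|_{H^3}^2$. Then use Gronwall and the stochastic Gronwall lemma, which gives bounds in probability since only convergence in probability of the data is assumed. The probability that $u^n$ exceeds level $R$ is controlled by $u$ exceeding $R/2$ plus the difference, again by a stopping-time argument.
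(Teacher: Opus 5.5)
Your argument is correct in substance, but it globalizes differently from the paper.

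\textbf{The paper's route.} The paper reduces to $u_0\in L^\infty_{\mathcal F_0}(\Omega;H^1)$ as in \cite[Proposition 4.5]{AgVe24a}. It then runs the proof of \cite[Theorem 3.8]{AgVe24a} verbatim, with three inputs: the coercivity \eqref{ineq: linear coercivity in weakened setting}, the local Lipschitz bounds, and the energy estimate \eqref{eq:intermediate-main-energy-estimate 2}. The energy estimate controls, uniformly in $n$, the probability that $u$ \emph{or} $u^n$ leaves a large ball of $C([0,t];H^1)\cap L^2(0,t;H^3)$ before $T$. A stability estimate for $u-u^n$ is then applied on that random interval.

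\textbf{Your route.} You instead stop when $u$ leaves an $R$-ball or $u-u^n$ leaves a $\delta$-ball in that norm. The norm is continuous and nondecreasing in $t$, so on $\{\tau_n<\tau^R\}$ the difference has norm at least $\delta$ on $[0,\tau_n]$. The stability estimate itself then gives $\mathbb P(\tau_n<\tau^R)\le \delta^{-2}C_{R,\delta,T}\,\mathbb E\|u_0-u_0^n\|_{H^1}^2\to 0$, after localizing the data on $\mathcal F_0$-sets. This needs no moment bounds on $u^n$ uniform in $n$, only global existence of $u$. Your route is more self-contained; the paper's is shorter because it quotes an abstract theorem.

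\textbf{Two caveats.}

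First, the local continuity statement in your first step is not an off-the-shelf result of \cite{AgVe25}, as your placeholder citation already suggests. That statement has an arbitrary stopping time $\tau<\sigma$, $\tau_n\to\tau$, and $\tau_n$ an exit time of the difference. You should not rely on how $\tau_n$ is defined in that theory. The It\^o/Gronwall argument you list as a fallback is what actually proves the corollary. Make it the main argument, with $\tau_n$ defined by you as above.

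Second, in that argument the Lipschitz factors are not constants depending on $R$. Set $w=u-u^n$. The interpolation $\|w\|_{\mathcal V_\beta}\lesssim\|w\|_{H^1}^{2-2\beta}\|w\|_{H^3}^{2\beta-1}$ and Young's inequality produce a time-dependent Gronwall coefficient $a(t)=C\bigl(1+\|u\|_{\mathcal V_\beta}^{\rho}+\|u^n\|_{\mathcal V_\beta}^{\rho}\bigr)^{1/(1-\beta)}$. The same coefficient arises from the It\^o correction of the noise difference. The integral $\int_0^{\tau_n}a\,\rd t$ is bounded in terms of $R,\delta,T$ precisely because $(2\beta_j-1)(\rho_j+1)\le 1$. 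This holds strictly for the exponents obtained in Proposition \ref{thm:LWP in intermediate}.
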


\begin{remark}
The global well-posedness conclusion of
Theorem~\ref{theorem:GWP_intermediate} remains valid
if \eqref{eq:g0-assumption_intermediate} is replaced by
the weaker assumption
\begin{equation}\label{eq:intermediate-weaker-origin intro}
g^{(0)}\in
L^\infty((0,T)\times\Omega;W^{1,q_0}(\ell^2)),
\qquad T>0,
\quad\text{for some }q_0 \in [4,\infty).
\end{equation}
Under this assumption, the proof establishes the
instantaneous regularization
\eqref{eq:parabolic reg of intermediate sol0}
for $q\in[2,q_0]$.
\end{remark}

The proofs of Theorem \ref{theorem:GWP_intermediate} and Corollary \ref{Coroll:GWP_intermediate} are presented in Section \ref{Sec:intermediate_setting}.

\subsubsection{Strong setting}
We now consider the strong setting with $\mathcal V=  {}_\nu H^4=\{ u \in H^4 \colon \partial_n u= \partial_n \Delta u =0\} $ and $\mathcal H =  {}_\nu H^2 = \{ u \in H^2 \colon  \partial_n u = 0 \}$. Similar to the intermediate setting, we require higher-order regularity on $g$ and do not allow for dependence on the gradient variable. Moreover, we require $g$ to satisfy a compatibility condition as given below.

\begin{assumption}[Noise assumptions in the strong setting]\label{assumptions on g strong setting}
Suppose that the function $    g=(g_k)_{k\ge1}
    \colon
    \mathbb R_+\times\Omega\times{\mathscr D}\times\mathbb R^3
    \longrightarrow \ell^2
$ is independent of the gradient variable. Moreover, 
\begin{enumerate}
\item[(1)] (Measurability) For every $y\in\mathbb R^3$, the map
$
    (t,\omega,x)\longmapsto g(t,\omega,x,y)
$
is $\mathcal P\otimes\mathcal B({\mathscr D})$-measurable.
The same measurability property holds for all the derivatives appearing
below;

\item[(2)]  (Integrability at the origin) For every $T>0$, the function $g^{(0)}(t,\omega)\coloneq g(t,\omega,\cdot,0)$ belongs to $L^\infty((0,T)\times\Omega;  {}_\nu H^2(\ell^2))$, namely, there is $K_T>0$ such that, 
\begin{equation}
\label{eq:g0-H2-strong}
    \|g(t,\omega,\cdot,0)\|_{H^2(\ell^2)}
    \le K_T \quad\text{ for a.e. $(t,\omega)$;}
\end{equation}

\item[(3)] (Lipschitz in $u$) The map $(x,y)\mapsto g(t,\omega,x,y)$ is twice continuously differentiable for a.e.
$(t,\omega)$, and there is $L>0$ such that, uniformly in $(t,\omega,x)$ and
$y,y'\in\bbR^3$,
\begin{align}
\|g(t,\omega,x,y)-g(t,\omega,x,y')\|_{\ell^2}
&\le L|y-y'|,
\label{eq:g-strong-Lip}
\\
\|\nabla_xg(t,\omega,x,y)-\nabla_xg(t,\omega,x,y')\|_{\ell^2}
&\le L|y-y'|,
\label{eq:gx-strong-Lip}
\\
\|\nabla_x^2g(t,\omega,x,y)-\nabla_x^2g(t,\omega,x,y')\|_{\ell^2}
&\le L|y-y'|
\label{eq:gxx-strong-Lip}
\\
\|\partial_yg(t,\omega,x,y)\|_{\ell^2}
&\le L,
\label{eq:gy-strong-bdd}
\\
\|\partial_{xy}^2g(t,\omega,x,y)\|_{\ell^2}
&\le L,
\label{eq:gxy-strong-bdd}
\\
\|\partial_{yy}^2g(t,\omega,x,y)\|_{\ell^2}
&\le L
\label{eq:gyy-strong-bdd}
\\
\|\partial_yg(t,\omega,x,y)-\partial_yg(t,\omega,x,y')\|_{\ell^2}
&\le L|y-y'|,
\label{eq:gy-strong-Lip}
\\
\|\partial_{xy}^2g(t,\omega,x,y)-\partial_{xy}^2g(t,\omega,x,y')\|_{\ell^2}
&\le L|y-y'|,
\label{eq:gxy-strong-Lip}
\\
\|\partial_{yy}^2g(t,\omega,x,y)-\partial_{yy}^2g(t,\omega,x,y')\|_{\ell^2}
&\le L|y-y'|.
\label{eq:gyy-strong-Lip}
\end{align}

\item[(4)](Compatibility condition) The noise is compatible with the homogeneous Neumann boundary
condition: for a.e.\,$(t,\omega)$ and every $y\in\mathbb R^3$,
\begin{equation}
\label{eq:g-Neumann-compatibility}
    \partial_n^x g(t,\omega,\cdot,y)=0
    \quad\text{on }\partial\mathscr D.
\end{equation}
\end{enumerate}
\end{assumption}

\begin{theorem}[Global well-posedness in the strong setting]\label{theorem:GWP_d=3_L2_time}
    Let $d\in\{1,2,3\}$ and suppose that Assumption \ref{assumptions on g strong setting} holds. Then for any $u_0 \in L_{\mathcal F_0}^{0}(\Omega;  {}_\nu H^2)$, there exists a unique global solution $u$ to equation \eqref{eq: LLB1} such that 
    \[
        u \in L_{\rm loc}^2([0,\infty);   {}_\nu  H^4)\cap  C([0,\infty);  {}_\nu H^2)\, \text{ a.s.}
    \]
    Moreover, if $u_0 \in L_{\mathcal F_0}^{28}(\Omega; H^1)\cap L_{\mathcal F_0}^{2}(\Omega;  {}_\nu H^2)$, then for every $T>0$,
\begin{align}
\label{eq:strong-main-energy-estimate}
\mathbb E
    \sup_{t\in[0,T]}
    \|u(t)\|_{H^2}^2
+
\mathbb E\int_0^T
    \|u(t)\|_{H^4}^2
\,\rd t
\le
C_T\left(
1+\mathbb E\|u_0\|_{H^2}^2
+\bbE \|u_0\|_{H^1}^{28}
\right).
\end{align}
\end{theorem}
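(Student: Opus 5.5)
We follow the same scheme as in the intermediate setting, now one level higher in the variational scale: local well-posedness from critical variational/stochastic maximal regularity theory, then global existence by ruling out blow-up through energy estimates. Concretely, we work in the Gelfand triple $\mathcal V={}_\nu H^4\hookrightarrow\mathcal H={}_\nu H^2\hookrightarrow\mathcal V^*={}_\nu H^0=L^2$. Lemma~\ref{lemma:SMR} with $q=2$, $s=0$ gives $A_{0,2}\in\mathcal{SMR}^\bullet_{2,0}$, and the trace space is $(L^2,{}_\nu H^4)_{1/2,2}={}_\nu H^2$.

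\textbf{Step 1: local estimates.} We check the local Lipschitz estimates of \cite{AgVe25}: $\|F(u)-F(v)\|_{L^2}$ and $\|g(u)-g(v)\|_{H^2(\ell^2)}$ should be controlled by $(1+\|u\|_{\mathcal V_{\beta}}^{\rho}+\|v\|_{\mathcal V_\beta}^\rho)\|u-v\|_{\mathcal V_\beta}$ plus a term of the form $C_n\|u-v\|_{\mathcal V_{\beta'}}$ with $\beta'<1$, where $\|u\|_{\mathcal H},\|v\|_{\mathcal H}\le n$. For the drift, $\Delta(|u|^2u)$ and $\Delta u\times u$ are handled using the embedding ${}_\nu H^2\hookrightarrow L^\infty$ for $d\le3$. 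Together with Sobolev/Gagliardo--Nirenberg bounds for $\nabla u\otimes\nabla u$ and $u\,\Delta u$, this shows $F$ is in fact subcritical from $\mathcal H$ into $L^2$. For the noise, the estimate $g(u)\in{}_\nu H^2(\ell^2)$ needs the chain rule with $\partial^2_{yy}g$, $\partial^2_{xy}g$, $\nabla_x^2 g$ and the bounds \eqref{eq:gy-strong-bdd}--\eqref{eq:gyy-strong-Lip}; the quadratic term $\partial_{yy}g[\nabla u,\nabla u]$ is controlled via $H^{1}\cap H^{3/2+}$ interpolation. Membership in the Neumann space ${}_\nu H^2$ follows from $\partial_n g(u)=\partial_n^x g(\cdot,u)+\partial_yg(u)\partial_nu=0$, using \eqref{eq:g-Neumann-compatibility} and $\partial_n u=0$. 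This yields a unique maximal solution $(u,\sigma)$ together with the blow-up criterion: on $\{\sigma<T\}$, $\sup_{t<\sigma}\|u\|_{H^2}+\int_0^\sigma\|u\|_{H^4}^2=\infty$.

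\textbf{Step 2: global existence.} Since ${}_\nu H^2\subset H^1$, Theorem~\ref{theorem:GWP_intermediate} provides a global $H^1$-solution, which coincides with $u$ on $[0,\sigma)$ by uniqueness. It therefore suffices to bound the $H^2$ norm in terms of $H^1$ data. We apply It\^o's formula to $\|\Delta u\|_{L^2}^2$, or equivalently to $\|(1+A)^{1/2}u\|^2$, made rigorous by stopping times and the variational It\^o formula in $\mathcal V,\mathcal H,\mathcal V^*$. The leading term gives $-2\alpha_1\|\Delta^2u\|^2$ up to lower-order terms. The drift contribution $(F(u),\Delta^2 u)$ is estimated by Young's inequality as
\[
\tfrac{\alpha_1}{2}\|u\|_{H^4}^2+C\bigl(1+\|u\|_{H^1}^{a}\bigr)\|u\|_{H^3}^2\bigl(1+\|u\|_{H^2}^2\bigr),
\]
using Gagliardo--Nirenberg inequalities in $d=3$. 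The It\^o correction $\|g(u)\|_{H^2(\ell^2)}^2\lesssim1+\|u\|_{H^2}^2+\|\nabla u\|_{L^4}^4$ is handled the same way.

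\textbf{Step 3: Gr\"onwall and the moment bound.} A stochastic Gr\"onwall lemma, with random weight $\int_0^T(1+\|u\|_{H^1}^a)\|u\|_{H^3}^2$ that is finite a.s. by the intermediate result, excludes blow-up and gives $\sigma=\infty$ a.s. For \eqref{eq:strong-main-energy-estimate}, we aim for a deterministic-weight version instead. We track the worst Gagliardo--Nirenberg exponent, apply the Burkholder--Davis--Gundy inequality to the martingale term, and use higher moments of the intermediate energy to close the estimate via H\"older's inequality. This requires moment versions of \eqref{eq:intermediate-main-energy-estimate 2} for higher powers, obtained by It\^o's formula on $\|u\|_{H^1}^{2m}$; such versions are what produce the exponent $28$.

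\textbf{Main obstacle.} We expect the hardest step to be this drift estimate in $d=3$: balancing the powers so that the remaining factor multiplying $\|u\|_{H^2}^2$ is integrable in time with finite expectation, rather than of exponential type. A standard Gr\"onwall argument would introduce $\exp\int\|u\|_{H^3}^2$, which is not integrable. We would first try a stopping-time or truncation argument combined with Gagliardo--Nirenberg interpolation between $H^1$, $H^3$ and $H^4$. The aim is to make the bad terms linear in $\|u\|_{H^3}^2$ times a polynomial in $\|u\|_{H^1}$, so that $\mathbb E\sup\|u\|_{H^1}^{28}$-type moments suffice.
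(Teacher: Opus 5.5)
Your route to global existence is a workable alternative to the paper's. Along the global $H^1$ solution the weight $\int_0^T(1+\|u\|_{H^1}^a)(1+\|u\|_{H^3}^2)\,\rd t$ is a.s.\ finite. Stopping when it exceeds $R$ and running Gr\"onwall against the factor $\exp\bigl(-C\int_0^t(1+\|u\|_{H^1}^a)(1+\|u\|_{H^3}^2)\,\rd s\bigr)$ therefore does exclude blow-up. One caveat: Assumption~\ref{assumptions on g strong setting} does not imply Assumption~\ref{ass:noise_intermediate}. Indeed, \eqref{eq:g0-H2-strong} only gives $g^{(0)}\in W^{1,4}(\ell^2)$, not $W^{1,\infty}(\ell^2)$, so the $H^1$ theory has to be invoked through Remark~\ref{rem:intermediate-weaker-origin}.

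The genuine gap is the moment bound \eqref{eq:strong-main-energy-estimate}, which is part of the statement and which you do not prove. With the drift bound you wrote down, $\|u\|_{H^2}^2$ is multiplied by the random factor $(1+\|u\|_{H^1}^a)\|u\|_{H^3}^2$. Any Gr\"onwall argument then yields $\exp\bigl(C\int_0^T(1+\|u\|_{H^1}^a)\|u\|_{H^3}^2\,\rd t\bigr)$, whose expectation is not controlled by polynomial moments of $u_0$. You notice this yourself, but your last paragraph only describes what you would try, and the exponent $28$ is asserted rather than derived. Also, do not seek the higher $H^1$ moments from It\^o's formula for $\|u\|_{H^1}^{2m}$: the pairing $\langle\Delta(|u|^2u),\Delta u\rangle$ generated by the Baryakhtar term is not sign-definite. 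These moments are already provided, for every $p$, by Proposition~\ref{prop:energy estimate in H1}, via the Ginzburg--Landau energy $\mathscr E$.

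The missing step is an \emph{additive} bound for the nonlinear terms, obtained by interpolating directly between $H^1$ and $H^4$ rather than through $H^2$ or $H^3$. Use $\|v\|_{L^\infty}\lesssim\|v\|_{H^1}^{5/6}\|v\|_{H^4}^{1/6}$ (from $({}_\nu H^1,{}_\nu H^4)_{1/6,1}={}_\nu B^{3/2}_{2,1}\hookrightarrow L^\infty$), $\|\nabla v\|_{L^4}\lesssim\|v\|_{H^1}^{3/4}\|v\|_{H^4}^{1/4}$ and $\|v\|_{H^2}\lesssim\|v\|_{H^1}^{2/3}\|v\|_{H^4}^{1/3}$. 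Then the worst terms of $\langle v,F(v)\rangle_{{}_\nu H^4,L^2}$ are bounded by $\|v\|_{H^1}^{7/3}\|v\|_{H^4}^{5/3}\le\varepsilon\|v\|_{H^4}^2+C_\varepsilon\|v\|_{H^1}^{14}$, while the lower-order pairings have a good sign (Lemma~\ref{lem:strong-drift-estimate}). Similarly, $\|\nabla v\|_{L^4}^4\lesssim\|v\|_{H^1}^3\|v\|_{H^4}$ controls the It\^o correction and the martingale term (Lemma~\ref{lem:strong-noise-growth}).

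After this, $\|u\|_{H^2}^2$ appears only with a deterministic constant, and an ordinary Gr\"onwall in expectation closes. The only external input is $\mathbb E\sup_{t<T\wedge\sigma}\|u(t)\|_{H^1}^{14}\le C_T(1+\mathbb E\|u_0\|_{H^1}^{28})$, i.e.\ Proposition~\ref{prop:energy estimate in H1} with $p=7$. This is where $28=4\cdot 7$ comes from; the factor $4$ is due to the quartic term in $\mathscr E(u_0)$. Applied after localising to $u_0\in L^\infty_{\mathcal F_0}(\Omega;{}_\nu H^2)$, the same estimate together with the blow-up criterion of Proposition~\ref{prop:LWP in strong} also gives global existence, so your random-weight step becomes unnecessary.

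Your own stated aim is attainable as well. Interpolating between $H^1$ and $H^3$ bounds the drift by $\varepsilon\|u\|_{H^4}^2+C(1+\|u\|_{H^1}^4)(1+\|u\|_{H^3}^2)$, which can be closed with Cauchy--Schwarz, Lemma~\ref{lem:d4-endpoint-elliptic} and higher moments from Proposition~\ref{prop:energy estimate in H1}. However, this has to be carried out, and it is not in the proposal.
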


Furthermore, the following continuous dependence on the initial data holds.
\begin{corollary}\label{Coroll:GWP_strong}
Suppose that the assumptions of Theorem \ref{theorem:GWP_d=3_L2_time} hold.  If $u_0^n \in L_{\mathcal F_0}^{0}(\Omega;  {}_\nu H^2)$ are such that $\|u_0-u^n_0\|_{H^2} \to 0$ in probability, then for every $T>0$,
    \begin{equation}
        \|u-u^n\|_{L^2(0,T;H^4)}+\|u-u^n\|_{ C([0,T];H^2)} \to 0 \quad \text{in probability as } n \to \infty,
    \end{equation}
    where $u^n$ is the unique global solution to \eqref{eq: LLB1} with initial data $u_0^n$. 
\end{corollary}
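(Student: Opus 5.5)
The plan is to deduce Corollary~\ref{Coroll:GWP_strong} from the local continuous dependence built into the critical variational framework of \cite{AgVe25}, together with the global existence of Theorem~\ref{theorem:GWP_d=3_L2_time}. We work in the strong setting $\mathcal V={}_\nu H^4$, $\mathcal H={}_\nu H^2$. We use three ingredients: Lemma~\ref{lemma:SMR} for $A_{2,2}$ (that is, $\mathcal{SMR}^\bullet_{2,0}$ on ${}_\nu H^2$); the local Lipschitz estimates for $F$ and $g$ between $\mathcal V_{\beta}$ and $\mathcal V^*={}_\nu H^0$, respectively $\ell^2(\mathcal H^{1/2})$, established in the proof of that theorem; and the compatibility condition \eqref{eq:g-Neumann-compatibility}. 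These place the problem within the framework of the local well-posedness theory for critical semilinear SPDEs. That theory yields the following local statement. For every $u_0\in L^0_{\mathcal F_0}(\Omega;{}_\nu H^2)$ there exist a stopping time $\sigma>0$ and a set $\Gamma$ of positive probability with the following property. For $u_0^n\to u_0$ in probability, the maximal solutions $(u^n,\tau^n)$ satisfy
\[
\liminf_n \mathbb P(\tau^n>\sigma)\ \text{ is close to } 1,\qquad \|u-u^n\|_{L^2(0,\sigma\wedge\tau^n;H^4)}+\|u-u^n\|_{C([0,\sigma\wedge\tau^n];H^2)}\to0
\]
in probability. I will quote this as \cite[Theorem 4.7 / Proposition 4.13]{AgVe25}, applied with $p=2$, $\kappa=0$. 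Since all solutions are global by Theorem~\ref{theorem:GWP_d=3_L2_time}, $\tau^n=\infty$, and the only issue is to pass from the small random time $\sigma$ to an arbitrary deterministic $T$.

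To extend to $[0,T]$, fix $T$, $\varepsilon>0$ and $\delta>0$. By global existence, $\|u\|_{C([0,T];H^2)}<\infty$ almost surely, so there is $R$ with $\mathbb P(\sup_{[0,T]}\|u\|_{H^2}>R)<\varepsilon$. Localized on $\{\|u_0\|_{H^2}\le R\}$-type sets, the local theory gives a uniform existence time: there is a deterministic $\delta_R>0$ (or at least a stopping time with $\mathbb P(\sigma_R<\delta_R)$ small, uniformly in data bounded by $R$) on which the difference estimate holds with a constant depending only on $R$. We then iterate over $[k\delta_R,(k+1)\delta_R]$, $k\le T/\delta_R$. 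At each restart time $t_k$, the data $u(t_k)$ and $u^n(t_k)$ are $\mathcal F_{t_k}$-measurable and converge in probability in ${}_\nu H^2$ by the previous step, so the strong Markov-type restart (the equation is time-inhomogeneous, but the local theory allows arbitrary starting stopping times) gives convergence on the next interval. A finite number of steps covers $[0,T]$ up to an exceptional set of probability at most $C\varepsilon$. Letting $\varepsilon\to0$ concludes the proof.

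An alternative route, which I would attempt first because it is cleaner, is a direct stability estimate. Define $\tau_R^n=\inf\{t:\|u(t)\|_{H^2}+\|u^n(t)\|_{H^2}\ge R\}\wedge T$. Apply It\^o's formula to $\|u-u^n\|_{H^2}^2$ (via the ${}_\nu H^2$ inner product $\langle (\lambda+A)^{1/2}\cdot,(\lambda+A)^{1/2}\cdot\rangle$). Then use coercivity of $A$, the Lipschitz bound $\|F(u)-F(v)\|_{L^2}\lesssim_R \|u-v\|_{H^4}^{\theta}\|u-v\|_{H^2}^{1-\theta}$ for some $\theta<1$, which follows from $H^2\hookrightarrow L^\infty$ in $d\le3$, and the Lipschitz properties \eqref{eq:g-strong-Lip}--\eqref{eq:gyy-strong-Lip}. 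Together with Young's inequality and the stochastic Gronwall lemma, these yield
\[
\mathbb E\sup_{t\le\tau_R^n}\|u-u^n\|_{H^2}^2+\mathbb E\int_0^{\tau_R^n}\|u-u^n\|_{H^4}^2\,\rd t\le C_{R,T}\,\mathbb E\bigl(\|u_0-u_0^n\|_{H^2}^2\wedge 1\bigr),
\]
after first truncating the data by $\mathbf 1_{\{\|u_0\|_{H^2}+\|u_0^n\|_{H^2}\le R\}}$ using $\mathcal F_0$-measurability.

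The main obstacle in either route is showing that $\mathbb P(\tau_R^n<T)$ is small uniformly in $n$, i.e. tightness of $\sup_{[0,T]}\|u^n\|_{H^2}$. My plan is to handle this with a bootstrap. On $\{\tau_R^n<T\}\cap\{\sup_{[0,T]}\|u\|_{H^2}\le R/3\}$, the difference must reach $R/3$ before $\tau_R^n$, and the estimate above shows this has probability tending to $0$. Hence $\limsup_n\mathbb P(\tau^n_R<T)\le\mathbb P(\sup\|u\|_{H^2}>R/3)$, which is small for large $R$. This avoids needing the moment bound \eqref{eq:strong-main-energy-estimate} for $u^n$, which would require $L^{28}$ integrability of the data.
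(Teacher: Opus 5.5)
Your second route (the direct stability estimate plus the bootstrap) is correct and gives a complete proof, but it is not the paper's route.

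\textbf{The paper's route.} The paper reduces by localization to $u_0\in L^\infty_{\mathcal F_0}(\Omega;{}_\nu H^2)$. It then runs the abstract continuous-dependence argument of \cite[Theorem~3.8]{AgVe24a}. This combines local continuity from the critical variational theory with the energy estimate \eqref{eq:strong-main-energy-estimate}. For bounded data, that estimate bounds $\sup_{[0,T]}\|u^n\|_{H^2}$ uniformly in $n$, which keeps the approximants from reaching large norms.

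\textbf{Your route.} You use the fact that, in the strong setting, the estimates from the proof of Proposition~\ref{prop:LWP in strong} are Lipschitz in the $\mathcal H$-norm itself: $\|F(u)-F(v)\|_{L^2}+\|G(u)-G(v)\|_{\mathcal L_2(\ell^2,{}_\nu H^2)}\le C_R\|u-v\|_{H^2}$ whenever $\|u\|_{H^2},\|v\|_{H^2}\le R$. So your $\theta$ can be taken to be $0$. Together with \eqref{eq:linear coercivity strong setting}, It\^o's formula for $\|u-u^n\|_{H^2}^2$, BDG and Gronwall, this gives the stopped bound. Your bootstrap then does the rest: on $\{\tau_R^n<T,\ \sup_{[0,T]}\|u\|_{H^2}\le R/3\}$ the difference has norm at least $R/3$ at time $\tau_R^n$, or already at time $0$ off the truncation set. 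This replaces the uniform energy bound by the a.s.\ finiteness of $\sup_{[0,T]}\|u\|_{H^2}$ alone.

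\textbf{What each buys.} Your argument is self-contained and needs no moment bounds for $u^n$. The paper's is shorter and more robust. The same citation covers the intermediate setting and genuinely critical cases such as the weak setting for $d=2$. There the Lipschitz constants involve $\mathcal V_\beta$-norms with $\beta>1/2$, so your stopping time would also have to control $\int_0^t(\|u\|_{\mathcal V}^2+\|u^n\|_{\mathcal V}^2)\,\rd s$.

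\textbf{Your first route.} You can drop it. The uniform existence time for data of size $R$ is asserted rather than proved; it holds here only because every estimate is strictly subcritical. Also, the relevant operator is $A_{0,2}$ on $\mathcal V^*=L^2$, with trace space ${}_\nu H^2$, not $A_{2,2}$.
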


\begin{remark}[Parabolic regularization]
Analogously to Remark \ref{remark:parabolic reg weak setting}, by replacing \eqref{eq:g0-H2-strong} with the more general assumption 
    $$g^{(0)} \in L^\infty((0,T)\times \Omega ; {}_\nu H^{2,q_0}(\ell^2)) ,\quad \text{ for every $T>0$}, \quad \text{ for some $q_0 \in [2,\infty)$},$$
one can show that the following instantaneous regularization holds almost surely:
    \begin{equation*}
        u \in H^{\theta,r}_{\mathrm{loc}}((0,\infty); {}_\nu H^{4-4\theta,q}), \quad 0 \le \theta<1/2, \quad r \in [2,\infty), \quad q \in [2,q_0].
        \end{equation*}
\end{remark}

The proofs of Theorem \ref{theorem:GWP_d=3_L2_time} and Corollary \ref{Coroll:GWP_strong} are presented in Section \ref{Sec:strong_setting}.

\subsection{Rough initial data} \label{sec:rough initial data}
We now examine the global well posedness of equation \eqref{eq: LLB1} with rough initial data from the scaling-critical space
\[
    B^{d/q-1}_{q,p}, \quad q \in [2,\infty), \quad p \in [2,\infty).
\]
Note that when
\(q>d\), the exponent \(d/q-1\) is negative, so the initial
data may live in the space of distributions and need not belong to any of the
variational energy spaces considered above.  Moreover, as $q\uparrow\infty$, the smoothness index $d/q-1$ approaches $-1$ from above.

\begin{theorem}[Global well-posedness in the critical space]
\label{thm:rough-final-global}
Suppose that Assumption~\ref{ass:noise_intermediate} holds. Let $d\in \{1,2,3\}$ and assume either 
\begin{enumerate}[(I)]
    \item   $q\in[2,\infty)$, $p \in (2,\infty)$ and $s,\kappa$ satisfy 
    $$
 3-\frac dq<s<\min\left\{3,5-\frac dq-\frac4p\right\}, \qquad 
\kappa = \frac{p(5 - s - d/q) - 4}{4};
$$

\item  $p=q=2$ and 
$$s=3-\frac d2 , \qquad \kappa=0.$$
\end{enumerate}
Then, for every
$u_0\in L^0_{\mathcal F_0}(\Omega;B^{d/q-1}_{q,p})$, there exists a unique global 
solution $u$ to equation \eqref{eq: LLB1} such that, almost surely,
\begin{equation}\label{eq:rough-global-critical-regularity}
 u\in L^p_{\mathrm{loc}}([0,\infty),t^\kappa; {}_\nu H^{4-s,q})
       \cap C([0,\infty);B^{d/q-1}_{q,p}).
\end{equation}
Moreover, the following instantaneous regularization holds almost surely:
\begin{equation}\label{eq:rough-global-smoothing}
 u\in H^{\theta,r}_{\mathrm{loc}}
       ((0,\infty); {}_\nu H^{3-4\theta,\zeta}),
 \qquad 0\le\theta<\frac12,\quad r,\zeta\in[2,\infty).
\end{equation}
\end{theorem}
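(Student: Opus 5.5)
The proof runs in three stages: local well-posedness in the critical weighted space, instantaneous regularization, and globalization through the intermediate setting.

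\emph{Local well-posedness.} Rewrite \eqref{eq: LLB1} as a semilinear stochastic evolution equation on the ground space $X_0={}_\nu H^{-s,q}$ with $X_1={}_\nu H^{4-s,q}$, and apply the local theory for critical spaces of \cite{AgVe25}. Lemma~\ref{lemma:SMR} gives $A_{-s,q}\in\mathcal{SMR}^\bullet_{p,\kappa}$. The trace space is
\[
(X_0,X_1)_{1-\frac{1+\kappa}{p},p}={}_\nu B^{4-s-4\frac{1+\kappa}{p}}_{q,p}.
\]
With the stated choice of $\kappa$, one has $4(1+\kappa)/p=5-s-d/q$, so the smoothness index is exactly $d/q-1$. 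The constraints on $s$ are what is needed for $\kappa\in[0,p/2-1)$ and for the space to be nontrivial. In case (II) the same identity holds with $p=q=2$, $\kappa=0$.

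Next I verify the critical local Lipschitz estimates (assumptions (HF),(HG) of \cite{AgVe25}) for $F$ and $g$.
\begin{itemize}
\item For the cubic term $\Delta(|u|^2u)$, write $\|\Delta(|u|^2u)\|_{H^{-s,q}}\lesssim\||u|^2u\|_{H^{2-s,q}}$. Estimate this by products in $H^{\beta,q}$ with $\beta$ chosen through Sobolev embeddings, so that the trilinear bound involves $X_{\beta}$ with $3(\beta-1+\frac{1+\kappa}{p})\cdot\frac{1}{2}\le \dots$. This is the criticality bookkeeping: for a cubic term, the condition reads $\frac{1+\kappa}{p}\le\frac{3}{2}(1-\beta)$ with equality at the scaling exponent.
\item The term $\Delta u\times u$ is handled similarly, as a bilinear term with two derivatives.
\item $|u|^2u$ is of lower order.
\item Since $g$ does not depend on $\nabla u$, the noise term satisfies $\|g(u)\|_{H^{2-s,q}(\ell^2)}\lesssim 1+\|u\|_{H^{2-s,q}}$ via Assumption~\ref{ass:noise_intermediate}, which is sub-critical because $2-s<1$.
\end{itemize}
This yields a unique maximal solution $(u,\sigma)$ with $u\in L^p_{\rm loc}([0,\sigma),t^\kappa;{}_\nu H^{4-s,q})\cap C([0,\sigma);B^{d/q-1}_{q,p})$.

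\emph{Instantaneous regularization.} Use the bootstrap results of \cite{AgVe25} (time-weight removal and $q$-increase, as in Lemma~\ref{lemma:parab reg variational}). These show that on $(0,\sigma)$ the solution lies in $H^{\theta,r}_{\rm loc}((0,\sigma);{}_\nu H^{3-4\theta,\zeta})$ for all $r,\zeta$. In particular $u(t)\in H^1$ for every $t\in(0,\sigma)$, and $u\in L^2_{\rm loc}((0,\sigma);{}_\nu H^3)$.

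\emph{Globalization.} Fix a small deterministic $t_0>0$ and work on the set $\{\sigma>t_0\}$. The restarted process $u(t_0+\cdot)$ solves \eqref{eq: LLB1} with $\mathcal F_{t_0}$-measurable data in $L^0(\Omega;H^1)$. By Theorem~\ref{theorem:GWP_intermediate} (applied with shifted filtration) it has a global solution $v$. Uniqueness in the larger class, given that the $H^1$-setting embeds in the critical setting locally, gives $u=v$ on $[t_0,\sigma)$. Hence $\sup_{[t_0,\sigma\wedge T)}\|u\|_{H^1}<\infty$ a.s., which controls the critical blow-up norm because $H^1\hookrightarrow B^{d/q-1}_{q,p}$ in $d\le3$ via Sobolev embedding. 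The blow-up criterion of \cite{AgVe25} then gives $\mathbb P(t_0<\sigma<\infty)=0$, and letting $t_0\downarrow0$ along a sequence yields $\sigma=\infty$.

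\emph{Main obstacle.} The hardest step is the first one: the precise critical nonlinear estimates for $\Delta(|u|^2u)$ and $\Delta u\times u$ in negative-order spaces ${}_\nu H^{-s,q}$ with Neumann duality, checked uniformly over the whole range of $(q,p,s)$. A second delicate point is the blow-up criterion: one needs the version in $L^p(t^\kappa;X_{1-\kappa/p}\dots)$ norms, or Serrin-type norms, rather than the trace norm alone, and must show these are dominated by the intermediate-setting quantities $L^2(H^3)\cap C(H^1)$ on $[t_0,T]$.
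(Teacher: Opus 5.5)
\textbf{Main gap: the local step.} You apply the unshifted critical theory of \cite{AgVe25}. There every nonlinearity is estimated as a map $X_\beta\to X_0={}_\nu H^{-s,q}$, and a term of growth $\rho$ must satisfy $\mu\le\frac{(1+\rho)(1-\beta)}{\rho}$ with $\mu=\frac{1+\kappa}{p}=\frac{5-s-d/q}{4}$. In case (I), $\mu<\frac12$ forces $s>3-\frac dq$.

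For the precession term $F_2(u)=\Delta u\times u$, no bound $\|F_2(u)\|_{{}_\nu H^{-s,q}}\lesssim\|u\|_{H^{t,q}}^2$ can hold with $t<\frac12$. Take $u=\chi(x)\bigl(a\cos(Nx_1)+b\sin(Nx_1)\bigr)$ with $\chi\in C_c^\infty(\mathscr D)$. Then exactly $\Delta u\times u=-2N\chi\,\partial_1\chi\,(a\times b)$, while $\|u\|_{H^{t,q}}\lesssim N^t$. With $\beta_2=\frac{s+t}{4}$ and $\rho=1$, the condition $\mu\le2(1-\beta_2)$ is equivalent to $s+2t\le3+\frac dq$. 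Since $t\ge\frac12$, this forces $s\le2+\frac dq$, which is compatible with $s>3-\frac dq$ only if $q<2d$.

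So in case (I), for $d=1$ no $q\ge2$ is reachable, and for $d=2,3$ you lose every $q\ge4$, respectively $q\ge6$. Your sentence that $\Delta u\times u$ is ``handled similarly'' hides exactly the bookkeeping that fails. The term is scaling-subcritical. The problem is that it naturally lands in ${}_\nu H^{-2,q}$ (or better), and measuring it in ${}_\nu H^{-s,q}$ with $s>2$ wastes $s-2$ derivatives that the unshifted condition cannot recover.

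The missing idea is the shifted local theory of \cite{AGV26shift} (and \cite{BGV26shift} for $p=q=2$). One estimates $F_j\colon X_{\beta_j}\to X_a={}_\nu H^{-(s\wedge2),q}$ with $a=\frac{(s-2)_+}{4}$, which turns the condition into $\mu\le\frac{(1+\rho)(1-\beta)+a}{\rho}$. With this, $F_1,F_3$ are exactly critical and $F_2$ is strictly subcritical, with margin $\frac{2-s}{4}$ if $s<2$ and $\frac{3d}{20q}$ if $s\ge2$.

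\textbf{Smaller points.} For $s>2$ your noise bound $\|g(u)\|_{H^{2-s,q}(\ell^2)}\lesssim1+\|u\|_{H^{2-s,q}}$ is meaningless, since $g$ cannot be composed with a distribution of negative order. You need a Lipschitz bound from a space embedded in $L^q$, via $L^q\hookrightarrow{}_\nu H^{2-s,q}$; for $s<2$, an $H^{1,q}$-bound with $u\in H^{(1+d/q)/2,q}$.

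In the globalization, $v\in C(H^1)\cap L^2(H^3)$ does not by itself place the intermediate solution in the weighted class $L^p(t_0,T,w_{t_0}^\kappa;{}_\nu H^{4-s,q})$ near $t_0$ when $p>2$ and $q$ is large. So ``uniqueness in the larger class'' is not immediate in case (I). The paper goes the other way. It lifts $u$ itself by stochastic maximal regularity in $({}_\nu H^{-2,q},{}_\nu H^{2,q})$ (if $s>2$) and then in $({}_\nu H^{-1},{}_\nu H^3)$, obtaining $u\in L^2_{\rm loc}((0,\sigma);{}_\nu H^3)\cap C((0,\sigma);H^1)$. It then uses uniqueness in the intermediate setting. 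This passage through smoother scales is also what your regularization step needs, beyond weight removal and $q$-increase.

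On the other hand, the domination you ask for in the blow-up criterion does hold. With $\gamma=4-s-\frac{4\kappa}{p}=\frac dq-1+\frac4p$, interpolation and Sobolev embedding give $C(H^1)\cap L^2({}_\nu H^3)\hookrightarrow L^p({}_\nu H^{1+4/p})\hookrightarrow L^p({}_\nu H^{\gamma,q})$ for $d\le3$.
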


\begin{remark}
Theorem \ref{thm:rough-final-global} remains valid
if \eqref{eq:g0-assumption_intermediate} is replaced by
the weaker assumption 
\eqref{eq:intermediate-weaker-origin intro}. 
Under this assumption, the proof establishes the
instantaneous regularization
\eqref{eq:rough-global-smoothing}
for $\zeta \in[2,q_0]$.
\end{remark}

The proof of Theorem \ref{thm:rough-final-global} is presented in
Section \ref{subsec:critical-LpLq-full-range}.

\section{Proofs}\label{section:proofs}

\subsection{Overview and auxiliary results}

In this section, we prove the main theorems presented in section \ref{section:main results}. We start by considering the problem in an abstract variational setting and show how it can be solved from the general framework of \cite{AgVe22, AgVe25} for the specific choices of Hilbert spaces $\mathcal V$, $\mathcal H$ leading to our weak, intermediate, and strong setting.

Let $(\mathcal V, \mathcal H, \mathcal V^*)$ be a Gelfand triple. Following the notation in \cite{AgVe25}, we rewrite \eqref{eq: LLB1} as
\[
\begin{cases}
    {\rm d}u + Au \, {\rm d}t = F(u) \, {\rm d}t + G(u) \, {\rm d}W_{\ell^2}, &t>0
    \\
    u(0)=u_0,
\end{cases}
\]
where $A \in \mathscr L( \mathcal V , \mathcal V^*)$ and $G\colon \bbR_+ \times \Omega \times \mathscr D \times \mathcal V \to \mathscr L_2(\ell^2, \mathcal H)  $ are given by 
\begin{equation}\label{eq:definition of A and G}
    Au = \alpha_1\Delta^2 u +\alpha_2\Delta u +\alpha_3 u, \quad  (G(u))(t,\omega,x) = \big(g_k(t,\omega,x,u(x),\nabla u(x))\big)_{k\ge1} ,
\end{equation}
 and the nonlinearity $F  \colon \mathcal V \to \mathcal V^* $ is split into three terms $F=\gamma_1F_1+\gamma_2F_2+\gamma_3F_3$ with 
\begin{equation}\label{eq:definition of F}
    F_1(u) =  \Delta(|u|^2 u), \quad F_2(u) =  \Delta u \times u, \quad F_3(u) = -|u|^2 u.
\end{equation}
Recall that $\alpha_1, \gamma_1,\gamma_2,\gamma_3$ are positive constants and  $\alpha_2, \alpha_3\in \mathbb{R}$. 
Note that the measurability assumptions of $g$ imply that $G$ is well defined and that $(t,\omega)\mapsto G(t,\omega)$ is progressively measurable. 

Our main tool for proving local well-posedness in the variational setting is \cite[Theorem 3.3]{AgVe24a} (see also \cite[Theorem 6.2 and Remark 4.4]{AgVe25}), which we recall for later reference.

\begin{theorem}[Local well-posedness and blow-up criteria]\label{Thm:LWP_variational}
Assume that 
\begin{enumerate}[(1)]
\item There exist $\theta>0$ and $M\ge 0$ such that for all $u\in \mathcal V$, 
\begin{equation}\label{ineq:coerc_A}
\langle u, Au\rangle_{\mathcal V, \mathcal V^*} \ge \theta \|u\|_{\mathcal V}^2 -M \|u\|_{\mathcal H}^2;
\end{equation}

\item The map $
        G\colon
        \mathbb R_+\times\Omega\times \mathscr D \times \mathcal V
        \longrightarrow
        \mathscr L_2(\ell^2,\mathcal H)
    $
    is $\mathcal P\otimes\mathcal B(\mathcal V)$-measurable.
    Moreover, setting 
    $        G_0(t,\omega,x) \coloneq G(t,\omega,x, 0),
    $
    one has
    \begin{equation}\label{eq:G0-L2loc}
        G_0\in
        L^2_{\mathrm{loc}}(
            [0,\infty);
            \mathscr L_2(\ell^2,\mathcal H))\, \text{ a.s.}
    \end{equation}
    
\item  There is $L>0$ such that for all $u,v \in \mathcal V$,
\begin{equation}\label{ineq:Local_Lip_F}
\|F_j(u) - F_j(v)\|_{\mathcal V^*}  \le  L \big(1 + \|u\|_{\mathcal V_{\beta_j}}^{\rho_j} + \|v\|_{\mathcal V_{\beta_j}}^{\rho_j}\big) \|u - v\|_{\mathcal V_{\beta_j}}, \quad j\in \{1,2,3\},
\end{equation}
and, for all $t \in \bbR_+$ and $\omega \in \Omega$, 
\begin{equation}\label{ineq:Local_Lip_G}
		\|G(t,\omega,u) - G(t,\omega,v)\|_{\mathscr{L}_2(\ell^2,\mathcal H)}  \le L  \big(1 + \|u\|_{\mathcal V_{\beta_4}}^{\rho_4} + \|v\|_{\mathcal V_{\beta_4}}^{\rho_4}\big) \|u - v\|_{\mathcal V_{\beta_4}} ,
	\end{equation}
	where $\beta_j \in (1/2, 1)$ and $\rho_j \geqslant 0$ satisfy the sub-criticality condition
\begin{equation}\label{ineq:subcriticality condition}
		(2\beta_j - 1)(\rho_j + 1) \le 1, \quad j \in \{1, \ldots, 4\}.
	\end{equation}
\end{enumerate}
Then, for any $u_0 \in L_{\mathcal F_0}^0(\Omega; \mathcal{H})$, there exists a unique maximal solution $(u,\sigma)$ to equation \eqref{eq: LLB1} with $\sigma>0$ a.s., such that $u \in L^2_{\mathrm{loc}}([0,\sigma);\mathcal V) \cap C([0,\sigma);\mathcal H)$.
Moreover, the following blow-up criteria hold:
\begin{align}
    &\mathbb{P}\Bigl(\sigma<\infty,\sup_{t\in [0,\sigma)}\|u(t)\|_{\mathcal{H}}+\|u\|_{L^2(0,\sigma;\mathcal{V})}<\infty\Bigr)=0,
   \intertext{and, if \eqref{ineq:subcriticality condition} holds with strict inequality,}
   \label{eq: blow-up-criterion-subcritical}
      &\mathbb{P}\Bigl(\sigma<\infty,\sup_{t\in [0,\sigma)}\|u(t)\|_{\mathcal{H}}<\infty\Bigr)=0.
\end{align}
\end{theorem}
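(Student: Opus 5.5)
The statement is a specialization of the abstract variational local well-posedness theory of \cite[Theorem 3.3]{AgVe24a} (equivalently \cite[Theorem 6.2 and Remark 4.4]{AgVe25}). The plan is therefore not to rebuild the theory, but to check that our data $(A,F,G)$ fit the hypotheses of that result, and then to outline why the conclusions hold.

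\emph{Matching the hypotheses.} We work in the Gelfand triple $(\mathcal V,\mathcal H,\mathcal V^*)$ with $\mathcal V_{\beta}=[\mathcal V^*,\mathcal V]_\beta$. The nonlinearity $F=\gamma_1F_1+\gamma_2F_2+\gamma_3F_3$ splits into finitely many pieces. Each piece is allowed its own pair $(\beta_j,\rho_j)$, and the abstract theorem accepts sums of such terms as long as each satisfies \eqref{ineq:subcriticality condition}. The time-dependent linear part is simply $A$, which is deterministic and constant in time. The $\mathcal P\otimes\mathcal B(\mathcal V)$-measurability of $G$ and \eqref{eq:G0-L2loc} are exactly the measurability and integrability requirements on the inhomogeneity $G(\cdot,0)$.

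\emph{Why the conclusions hold.} The sketch has four steps.
\begin{enumerate}[(i)]
\item \emph{Linear estimates.} The coercivity \eqref{ineq:coerc_A} gives, via the classical It\^o formula for $\|u\|_{\mathcal H}^2$, stochastic maximal $L^2$-regularity for $A$ on $\mathcal V^*$. The associated path space is $L^2(0,T;\mathcal V)\cap C([0,T];\mathcal H)$.
\item \emph{Local solution.} Interpolation together with $(2\beta_j-1)(\rho_j+1)\le1$ gives the bound
\[
\|u\|_{\mathcal V_{\beta_j}}^{\rho_j}\,\|u-v\|_{\mathcal V_{\beta_j}}
\]
in $L^2(0,T;\mathcal V^*)$, controlled by norms of the maximal regularity space. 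In the strictly subcritical case this comes with a small factor $T^{\varepsilon}$. In the critical case the smallness comes instead from the smallness of $\|u\|_{L^2(0,T;\mathcal V)}$ for short times. The same holds for $G$ in $L^2(0,T;\mathscr L_2(\ell^2,\mathcal H))$. After truncating the nonlinearities by stopping times, a contraction argument produces a local solution.
\item \emph{Maximal solution.} Uniqueness and gluing over stopping times yield the maximal solution $(u,\sigma)$ with $\sigma>0$ almost surely.
\item \emph{Blow-up criteria.} These are proved by contradiction. On the event where the stated norms stay bounded, the solution can be extended beyond $\sigma$. In the strictly subcritical case, boundedness in $\mathcal H$ alone already controls $L^2(\mathcal V)$ through the energy inequality, because the nonlinear terms can be absorbed using interpolation with a strict gain.
\end{enumerate}

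The main obstacle is the critical case $(2\beta_j-1)(\rho_j+1)=1$. There no power of $T$ is gained, so the contraction and the blow-up criterion require the refined localization arguments of \cite{AgVe24a}. I would simply invoke those arguments rather than reprove them.
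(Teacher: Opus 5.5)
Your proposal is correct and takes the same approach as the paper, which also proves the statement by directly invoking \cite[Theorem~3.3]{AgVe24a}. The only explicit check the paper makes is the one you point to: write $G=\widetilde G+G_0$, treat $G_0=G(\cdot,0)$ as the admissible inhomogeneity via \eqref{eq:G0-L2loc}, and take $v=0$ in \eqref{ineq:Local_Lip_G} to get the polynomial growth bound $\|\widetilde G(t,\omega,u)\|_{\mathscr L_2(\ell^2,\mathcal H)}\le C_L(1+\|u\|_{\mathcal V_{\beta_4}}^{\rho_4+1})$ required there. Your four-step sketch of the underlying theory is a reasonable outline, but it goes beyond what the paper does, since the paper reproves none of it.
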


To relate the above assumptions to the growth condition in
\cite[Theorem~3.3]{AgVe24a}, decompose the noise coefficient as
\[
    G(t,\omega,x,u)
    =
    \widetilde G(t,\omega,x,u)+G_0(t,\omega,x),
\]
where
\[
    \widetilde G(t,\omega,x,u)
    \coloneq
    G(t,\omega,x,u)-G(t,\omega,x,0).
\]
By \eqref{eq:G0-L2loc}, the process \(G_0\) is an admissible
inhomogeneous additive noise term. Moreover, taking \(v=0\) in
\eqref{ineq:Local_Lip_G}, we obtain
\begin{align}
    \label{eq:growth-centered-G}
    \|\widetilde G(t,\omega,u)\|
        _{\mathscr L_2(\ell^2,\mathcal H)}
    &\leq
    C_L(1+\|u\|_{\mathcal V_{\beta_4}}^{\rho_4})
    \|u\|_{\mathcal V_{\beta_4}}
   \le 
    C_L(1+\|u\|_{\mathcal V_{\beta_4}}^{\rho_4+1}).
\end{align}
Consequently, \(\widetilde G\) satisfies the polynomial growth
condition required in \cite[Theorem~3.3]{AgVe24a}.

\subsection{Proof of well-posedness in the weak setting}
\label{subsec: proofs weak setting}

In this subsection, we prove Theorem \ref{theorem:GWP_d=2_L2_time} and Corollary \ref{Coroll:GWP_d=2_L2_time}. Recall $\mathcal V =  {}_\nu H^2 = \{ u \in H^2 \colon   \partial_n u = 0 \}$, $\mathcal H =L^2$ and $d\in \{1,2\}$. 

We begin by first establishing local well-posedness for equation \eqref{eq: LLB1}.
\begin{proposition}[Local well-posedness]\label{prop:LWP in weak setting}
     Let $d\in\{1,2\}$ and suppose that Assumption \ref{ass:noise in weak setting} holds. Then, for any $u_0 \in L_{\mathcal F_0}^0(\Omega; L^2)$, there exists a unique  maximal local solution $(u,\sigma)$ to equation \eqref{eq: LLB1} with $\sigma>0$ a.s., such that 
     $$u \in L^2_{\mathrm{loc}}([0,\sigma); {}_\nu H^2) \cap C([0,\sigma);L^2).$$
\end{proposition}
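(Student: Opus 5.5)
The plan is to apply Theorem~\ref{Thm:LWP_variational} with the Gelfand triple $\mathcal V={}_\nu H^2$, $\mathcal H=L^2$, $\mathcal V^*=({}_\nu H^2)^*$. By the interpolation results for the scale $({}_\nu H^{s,2})$ recalled in Section~\ref{Sec: Preliminaries}, the intermediate spaces are $\mathcal V_\beta=[\mathcal V^*,\mathcal V]_\beta={}_\nu H^{4\beta-2}$. I will therefore check, in order, coercivity of $A$, measurability and integrability of $G$, and the local Lipschitz bounds for $F_1,F_2,F_3$ and $G$. For each term I will choose the exponents $(\beta_j,\rho_j)$ so that \eqref{ineq:subcriticality condition} holds.

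\textbf{Coercivity and the noise.} For $u\in{}_\nu H^2$, the Neumann condition allows integration by parts, giving
\[
\langle u,Au\rangle=\alpha_1\|\Delta u\|_{L^2}^2-\alpha_2\|\nabla u\|_{L^2}^2+\alpha_3\|u\|_{L^2}^2 .
\]
Elliptic regularity for the Neumann Laplacian gives $\|u\|_{H^2}\lesssim\|\Delta u\|_{L^2}+\|u\|_{L^2}$. Interpolation gives $\|\nabla u\|_{L^2}^2\le\epsilon\|u\|_{H^2}^2+C_\epsilon\|u\|_{L^2}^2$. Together these yield \eqref{ineq:coerc_A}.

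Progressive measurability of $G$ follows from Assumption~\ref{ass:noise in weak setting}(1) and the fact that $G$ is a Nemytskii-type operator. Condition \eqref{eq:G0-L2loc} follows from \eqref{eq:g0-weak-integrability}, since finite expectation implies almost sure finiteness. By \eqref{ineq:g is Lip},
\[
\|G(u)-G(v)\|_{L^2(\ell^2)}\le L\|u-v\|_{L^2}+L\|\nabla(u-v)\|_{L^2}\lesssim\|u-v\|_{H^1}.
\]
So $\beta_4=3/4$ and $\rho_4=0$, which is strictly subcritical.

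\textbf{Drift terms.} For $F_1$, I use that $\langle\Delta f,v\rangle=\langle f,\Delta v\rangle$ for $v\in{}_\nu H^2$, so $\|\Delta f\|_{\mathcal V^*}\lesssim\|f\|_{L^2}$. Hölder's inequality then gives
\[
\|F_1(u)-F_1(v)\|_{\mathcal V^*}\lesssim\bigl(\|u\|_{L^6}^2+\|v\|_{L^6}^2\bigr)\|u-v\|_{L^6}.
\]
In $d\le2$ we have $H^{2/3}\hookrightarrow L^6$, so $\beta_1=2/3$ and $\rho_1=2$, for which $(2\beta_1-1)(\rho_1+1)=1$. This is critical but allowed.

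$F_3$ is handled the same way with the $L^2$-norm, so it is bounded by the same quantity.

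For $F_2$, the key identity is $\Delta u\times u=\partial_j(\partial_ju\times u)$, which holds because $\partial_ju\times\partial_ju=0$. Testing against $v\in{}_\nu H^2$, the boundary term $\int_{\partial\mathscr D}(\partial_nu\times u)\cdot v$ vanishes, and
\[
|\langle F_2(u),v\rangle|\le\|\nabla u\times u\|_{L^{4/3}}\|\nabla v\|_{L^4}\lesssim\|\nabla u\times u\|_{L^{4/3}}\|v\|_{H^2},
\]
using $H^1\hookrightarrow L^4$ in $d\le2$. Writing $\nabla u\times u-\nabla w\times w=\nabla(u-w)\times u+\nabla w\times(u-w)$ and applying Hölder's inequality with $L^2\cdot L^4$ together with $H^1\hookrightarrow L^4$ gives
\[
\|F_2(u)-F_2(w)\|_{\mathcal V^*}\lesssim\bigl(\|u\|_{H^1}+\|w\|_{H^1}\bigr)\|u-w\|_{H^1}.
\]
Hence $\beta_2=3/4$ and $\rho_2=1$, which is again exactly critical.

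\textbf{Conclusion and main obstacle.} With all hypotheses verified, Theorem~\ref{Thm:LWP_variational} gives the unique maximal solution $(u,\sigma)$ with $u\in L^2_{\mathrm{loc}}([0,\sigma);{}_\nu H^2)\cap C([0,\sigma);L^2)$.

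The main obstacle is the quasilinear-looking term $F_2$. A naive estimate places $\Delta u$ in $L^2$ and fails to be critical, so the divergence-form rewriting is essential. One must also check that the boundary term vanishes under the Neumann condition, which also requires a density argument to justify the identity for $u\in{}_\nu H^{1}$. Both $F_1$ and $F_2$ sit exactly at criticality in $d=2$, which is consistent with the remark that the weak setting is critical in two dimensions.
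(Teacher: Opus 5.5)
Your proposal is correct and follows the paper's proof essentially step by step. It uses the same Gelfand triple and the same coercivity computation, together with the same exponents: $(\beta,\rho)=(2/3,2)$ for $F_1,F_3$ via $L^6$, $(3/4,1)$ for $F_2$ via the divergence-form rewriting, and $(3/4,0)$ for $G$.

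The only quibble is that no density argument is needed for $F_2$. The Lipschitz bounds are only required for $u,v\in{}_\nu H^2$, where the integration by parts and the vanishing of the boundary term are directly justified.
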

\begin{proof}
We verify the assumptions of Theorem \ref{Thm:LWP_variational} in our framework. We begin by checking that  the linear part of \eqref{eq: LLB1} is coercive, i.e., that \eqref{ineq:coerc_A} holds. By standard elliptic $L^2$-theory 
(see \cite[Proposition $7.4$]{Ta10}) it is enough to show that there are constants $\theta>0$ and $M\ge 0$ such that 
\begin{equation}\label{eq:proof_coerc_A}
\langle u, Au \rangle_{ {}_\nu H^2,( {}_\nu H^2)^*} \ge  \theta \|\Delta u\|_{L^2}^2-M\|u\|_{L^2}^2, \quad u \in  {}_\nu H^2.
\end{equation}
To prove \eqref{eq:proof_coerc_A}, using integration by parts, we calculate 
\begin{equation}\label{eq:formula for <u,Au>}
    \langle u, Au \rangle_{ {}_\nu H^2,( {}_\nu H^2)^*}= \alpha_1\|\Delta u\|_{L^2}^2-\alpha_2\|\nabla u\|^2_{L^2} +\alpha_3 \|u\|_{L^2}^2 .
\end{equation} 
We recall that for any $\epsilon>0$, Young's inequality and interpolation imply
\[\|\nabla u\|^2_{L^2} \le C_\epsilon\|u\|^2_{L^2}+\epsilon \|\Delta u\|^2_{L^2}.\]

 Hence, absorbing the term $\|\nabla u\|_{L^2}$ in \eqref{eq:formula for <u,Au>} gives \eqref{eq:proof_coerc_A}.

Next, we verify \eqref{ineq:Local_Lip_F} for the nonlinearities $F_1,F_3$ as defined in \eqref{eq:definition of F}. We estimate  
\begin{equation}\label{ineq:estimating F1 and F3}
	\begin{aligned}
		\| F_1(u) - F_1(v) \|_{( {}_\nu H^2)^*} + \| F_3(u) - F_3(v) \|_{( {}_\nu H^2)^*} & \lesssim  \| |u|^2 u - |v|^2 v \|_{L^2} 
		\\
		&\stackrel{\mathrm{(i)}}{\lesssim}  \| (u-v) (1+|u|^2+|v|^2) \|_{L^2} 
		\\ 
		&\stackrel{\mathrm{(ii)}}{\lesssim}   ( 1 + \|u\|_{L^6}^2 + \|v\|_{L^6}^2 ) \| u-v \|_{L^6}
		\\
		&\stackrel{\mathrm{(iii)}}{\lesssim} ( 1 + \|u\|_{H^{4\beta-2}}^2 + \|v\|_{H^{4\beta-2}}^2 ) \| u-v \|_{H^{4\beta-2}}
		\\
		& \stackrel{\mathrm{(iv)}}{\lesssim} ( 1 + \|u\|_{\mathcal V_\beta}^2 + \|v\|_{\mathcal V_\beta}^2 ) \| u-v \|_{\mathcal V_\beta}.
	\end{aligned}
\end{equation}
Indeed, (i) follows from 
$
\bigl||u|^2u-|v|^2v\bigr| \lesssim (1+|u|^2+|v|^2)|u-v|,
$ 
(ii) follows from H\"older's inequality, (iii) from the Sobolev embedding $H^{4\beta-2} \hookrightarrow L^6$ which holds in dimension $d$ when $\beta\ge (d+6)/12$, and (iv) follows from the embedding $\mathcal V_\beta \hookrightarrow  H^{4\beta-2}$. Therefore  \eqref{ineq:Local_Lip_F} holds with $\rho=2$, $\beta=2/3$ and in that case the criticality condition \eqref{ineq:subcriticality condition} is satisfied with equality.

We now verify \eqref{ineq:Local_Lip_F} for the nonlinearity $F_2$ given in \eqref{eq:definition of F}. We do so by a duality argument. Let $\phi \in  {}_\nu H^2$ be fixed but arbitrary. 
It suffices to prove that
\begin{equation}\label{ineq:duality estimate for F2}
	|(\phi,-\Delta u \times u + \Delta v \times v )_{L^2}| \lesssim  \|\phi\|_{H^2} (1 + \| u \|_{H^1} + \| v \|_{H^1} ) \| u - v \|_{H^1}.
\end{equation}
Integrating by parts and taking into account the boundary conditions of $u,v,\phi$ and the properties of the cross product, we get
\begin{align*}
\bigl(\phi,-\Delta u\times u+\Delta v\times v\bigr)_{L^2}
&=\int_{\mathscr D}
\bigl(\nabla u\times u-\nabla v\times v\bigr):\nabla\phi
\,\rd x
\\
&=
\int_{\mathscr D}
\Bigl(
\nabla(u-v)\times u+\nabla v\times(u-v)
\Bigr):\nabla\phi
\,\rd x.
\end{align*}
Consequently,
\begin{equation}
\begin{aligned}
\label{ineq:estimating F2}
\left|
\bigl(\phi,-\Delta u\times u+\Delta v\times v\bigr)_{L^2}
\right|
&\le
\int_{\mathscr D}
\Bigl(
|\nabla(u-v)|\,|u|
+
|\nabla v|\,|u-v|
\Bigr)|\nabla\phi|
\,\rd x
\\
&\stackrel{\mathrm{(i)}}{\le}
\|\nabla\phi\|_{L^4}
\left(
\|\nabla(u-v)\|_{L^2}\|u\|_{L^4}
+
\|\nabla v\|_{L^2}\|u-v\|_{L^4}
\right)
\\
&\stackrel{\mathrm{(ii)}}{\lesssim}
\|\phi\|_{H^2}
\left(
\|u\|_{H^1}+\|v\|_{H^1}
\right)
\|u-v\|_{H^1}
\\
&\, \lesssim \|\phi\|_{H^2}
(
\|u\|_{\mathcal V_{3/4}}
+
\|v\|_{\mathcal V_{3/4}}
)
\|u-v\|_{\mathcal V_{3/4}},
\end{aligned}
\end{equation}
where (i) follows from  H\"older's inequality with  $\tfrac 12+\tfrac 14+\tfrac 14=1$, (ii) from the Sobolev embeddings $H^{2} \hookrightarrow W^{1,4}$ and $H^1 \hookrightarrow L^4$ (both valid in $d \le 4$), and (iii) from the embedding $\mathcal V_{3/4} \hookrightarrow H^1 $. This shows that $F_2$ satisfies \eqref{ineq:Local_Lip_F} with $\rho=1$, $\beta=3/4$ and that \eqref{ineq:subcriticality condition} holds with equality.

Finally, we verify the assumptions on $G$. Note that the measurability of $G$ and the integrability assumption \eqref{eq:G0-L2loc} on $G_0$ follow by Assumption \ref{ass:noise in weak setting}. Recall that $\mathscr L_2(\ell^2,L^2)=L^2(\mathscr D; \ell^2)$ isometrically. Moreover, \eqref{ineq:g is Lip} implies that for all $u,v \in {}_\nu H^2$, $t \in \mathbb R_+$ and $\omega \in \Omega$ 
\begin{equation}\label{ineq:estimating G}
	\|G(t,\omega,u) - G(t,\omega,v)\|_{\mathscr{L}_2(\ell^2,L^2)} \lesssim \|u-v\|_{L^2} + \|\nabla (u-v)\|_{L^2} \lesssim \|u-v\|_{H^1} \lesssim \|u-v\|_{\mathcal V_{3/4}},
\end{equation}
where in the last step we used the embedding $\mathcal V_{3/4} \hookrightarrow H^1$. This shows that $G$ satisfies \eqref{ineq:Local_Lip_G} with $\rho=0$, $\beta=3/4$ and the sub-criticality condition \eqref{ineq:subcriticality condition} is satisfied with strict inequality.

\end{proof}

We now present the proofs of  Theorem \ref{theorem:GWP_d=2_L2_time} and Corollary \ref{Coroll:GWP_d=2_L2_time}.

\begin{proof}[Proof of Theorem \ref{theorem:GWP_d=2_L2_time}]
Let $(u,\sigma)$ be the local solution given by Proposition \ref{prop:LWP in weak setting}. 
We apply \cite[Theorem $3.5$]{AgVe24a}. We show that there exists $\phi \in L^2((0,T) \times \Omega)$ such that for any $v \in {}_\nu H^2$, $t \in [0,T]$ and $\omega\in \Omega$,
\begin{equation}\label{ineq: coercivity for GWP2}
\langle v, Av - F(v) \rangle_{{}_\nu H^2, ({}_\nu H^2)^*} - \tfrac12 \| G(t,\omega,v) \|_{\mathscr L_2(\ell^2,L^2)}^2 \geq \theta' \| v \|_{H^2}^2 - M' \| v \|_{L^2}^2 - \left|\phi(t,\omega)\right|^2.
\end{equation}
Recalling the definition of $F$ in \eqref{eq:definition of F} and
integrating by parts, we get
\begin{align*}
    -\langle v, F(v) \rangle_{{}_\nu H^2, ({}_\nu H^2)^*} 
    =  \gamma_1 \int_{\mathscr D} \Bigl(|v|^2 |\nabla v|^2+2 \sum_{j=1}^d (v\cdot \partial_j v)^2\Bigr) dx 
    + \gamma_3\| |v|^2 \|_{L^2}^2
    \ge 0.
\end{align*}
On the other hand, by the triangle inequality, Assumption \ref{ass:noise in weak setting} on the noise $G$, and Young's inequality there holds
\begin{equation}\label{eq:control_G_L2}
\begin{split}
\tfrac 12\|G(t,\omega,v)\|_{\mathscr{L}_2(\ell^2,L^2)}^2 & \leq      \|G(t,\omega,0)\|_{\mathscr{L}_2(\ell^2,L^2)}^2+\|G(t,\omega,v)-G(t,\omega,0)\|_{\mathscr{L}_2(\ell^2,L^2)}^2 
    \\
   & \le \|G(t,\omega,0)\|_{\mathscr{L}_2(\ell^2,L^2)}^2+ L\|v\|_{L^2}^2 + L\|\nabla v\|_{L^2}^2
   \\
   & \le \|G(t,\omega,0)\|_{\mathscr{L}_2(\ell^2,L^2)}^2 + LC_\epsilon \| v \|_{L^2}^2 + L \epsilon  \| \Delta v \|_{L^2}^2.
   \end{split}
\end{equation}
Let $\theta >0$ and $M\ge 0$ be as in \eqref{eq:proof_coerc_A}. Choosing $\varepsilon>0$ small enough, the above estimates and \eqref{eq:proof_coerc_A} give \eqref{ineq: coercivity for GWP2} 
with $\phi(t,\omega)\coloneq C\|G(t,\omega,0)\|_{\mathscr{L}_2(\ell^2,L^2)} $. Note  $\phi \in L^2((0,T)\times \Omega)$ by \eqref{eq:g0-weak-integrability}. Hence, the assumptions of \cite[Theorem $3.5$]{AgVe24a} are satisfied, which concludes the proof. 

\end{proof}

\begin{proof}[Proof of Corollary \ref{Coroll:GWP_d=2_L2_time}]
    This follows from \cite[Theorem 3.8]{AgVe24a}.
    
\end{proof}

\subsection{Proof of well-posedness in the intermediate setting}
\label{Sec:intermediate_setting}
In this section we prove Theorem \ref{theorem:GWP_intermediate} and Corollary \ref{Coroll:GWP_intermediate}. Recall that in this case
$\mathcal V =  {}_\nu H^3 = \{u\in H^3: \partial_n u=0 \text{ on }\partial\mathscr D\}$,
 $\mathcal H =H^1$ and $d\in \{1,2,3\}$.

\subsubsection{Local well-posedness in the intermediate setting} \label{sec: LWP in intermediate}
We begin by establishing local well-posedness of equation \eqref{eq: LLB1}:

\begin{proposition}[Local well-posedness] \label{thm:LWP in intermediate}
     Let $d\in\{1,2,3\}$ and suppose that Assumption \ref{ass:noise_intermediate} holds. Then for any $u_0 \in L_{\mathcal F_0}^{0}(\Omega; H^1)$ there exists a unique maximal solution $(u,\sigma)$ of equation \eqref{eq: LLB1} with $\sigma>0$ a.s. such that 
      \[
        u \in L_{\rm loc}^2([0,\sigma);   {}_\nu  H^3)\cap  C([0,\sigma); H^1) \, \text{ a.s.}
    \]
      Moreover, the following blow up criterion holds: 
     \begin{equation}\label{eq:blow up criterion in intermediate}
    \mathbb{P}\bigl(\sigma<\infty, \sup_{t\in[0,\sigma)}\|u(t)\|_{H^1}<\infty\bigr)=0.
\end{equation}
\end{proposition}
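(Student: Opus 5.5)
We verify the hypotheses of Theorem~\ref{Thm:LWP_variational} with $\mathcal V={}_\nu H^3$, $\mathcal H=H^1$, $\mathcal V^*=({}_\nu H^3)^*$, where the pairing is realized through the $H^1$ inner product, so that $\mathcal V^*\simeq {}_\nu H^{-1}$ and $\mathcal V_\beta={}_\nu H^{6\beta-1}$ by the interpolation identities of Section~\ref{Sec: Preliminaries}. We aim for strict sub-criticality in every term, since then \eqref{eq: blow-up-criterion-subcritical} gives the blow-up criterion \eqref{eq:blow up criterion in intermediate} directly.

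\textbf{Coercivity.} For $u\in{}_\nu H^3$ we use $\langle u,Au\rangle=(u,Au)_{L^2}+(\nabla u,\nabla Au)_{L^2}$. Integrating by parts with $\partial_n u=0$, the top-order part is $\alpha_1\|\nabla\Delta u\|_{L^2}^2$; the boundary term involving $\partial_n\Delta u$ is handled by density, since $(\nabla u,\nabla\Delta^2 u)=-(\Delta u,\Delta^2u)=\|\nabla\Delta u\|^2$ for $u\in {}_\nu H^4$. The lower-order terms are absorbed by interpolation, $\|u\|_{H^2}^2\le\epsilon\|u\|_{H^3}^2+C_\epsilon\|u\|_{H^1}^2$. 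Elliptic regularity for the Neumann Laplacian, $\|u\|_{H^3}\lesssim\|\nabla\Delta u\|_{L^2}+\|u\|_{H^1}$, then yields \eqref{ineq:coerc_A}.

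\textbf{Nonlinearities.} We estimate $F_j(u)-F_j(v)$ in $H^{-1}$, that is, in the dual of $H^1$.
\begin{itemize}
\item For $F_1=\Delta(|u|^2u)$, it suffices to bound $\||u|^2u-|v|^2v\|_{H^1}$. Expanding the gradient and using H\"older, this is controlled by $(1+\|u\|_{L^\infty}^2+\|v\|_{L^\infty}^2+\dots)\|u-v\|_{H^1}$ plus terms like $|u||\nabla u||u-v|$. In $d\le3$ we use $H^{s}\hookrightarrow L^\infty$ and $H^{s}\hookrightarrow W^{1,p}$ with $s=6\beta-1$. With $\rho=2$, criticality requires $(2\beta-1)\cdot3\le1$, i.e.\ $\beta\le 2/3$, so $s\le 3$. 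The critical choice is $s=1+2\cdot\frac{1}{3}\cdot 3/... $; more precisely, distribute the derivative: $\|\,|u|^2\nabla(u-v)\|_{L^2}\le\|u\|_{L^{6}}^{2}\cdot$... We need $\|u\|_{L^{2a}}^2\|\nabla(u-v)\|_{L^{b}}$, and Sobolev with $s=2$ ($\beta=1/2$ too small) suggests taking $\beta$ slightly less than $2/3$, i.e.\ $s$ slightly below $3$. Since $H^{s}\hookrightarrow L^\infty\cap W^{1,3}$ for $s>3/2+$, this works for $d\le3$ with room to spare, so strict inequality is attainable by taking $s\in(2,3)$ with $3(2\beta-1)<1$, i.e.\ $s<3$.
\item The cubic term $F_3$ is easier, since it carries two fewer derivatives.
\item For $F_2=\Delta u\times u$, we use duality against $\phi\in H^1$: $(\nabla\phi,\nabla(\Delta u\times u))$. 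Integrating once, $\nabla(\Delta u\times u)=\nabla\Delta u\times u+\Delta u\times\nabla u$, and we bound $\|\Delta u\times u-\Delta v\times v\|_{H^1}$ by $\|u\|_{H^{s}}\|u-v\|_{H^{s}}$ with $\rho=1$, which needs $2(2\beta-1)<1$, i.e.\ $s<5/2$. This is the main obstacle. The term $\nabla\Delta u\times u$ contains three derivatives, so it cannot be placed in $L^2$ with $s<5/2$. Instead we move one derivative onto $\phi$: $(\phi,\Delta u\times u)_{H^1}$ is rewritten as $\int\nabla\phi:\nabla(\Delta u\times u)$, and we integrate by parts again using $\partial_n u=0$, or alternatively use the $L^2$-part $(\phi,-\Delta\cdot)$ representation of the $H^{-1}$ norm. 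The goal is to write $\Delta u\times u=\operatorname{div}(\nabla u\times u)$, so that $\|F_2(u)\|_{H^{-1}}\lesssim\|\nabla u\times u\|_{L^2}$; this boundary-compatible identity is the same one used in the weak setting. The resulting bound $\|\nabla(u-v)\,u\|_{L^2}+\|\nabla v\,(u-v)\|_{L^2}\lesssim\|u\|_{H^{s}}\|u-v\|_{H^{s}}$ holds with $s=3/2$, even for $d=3$ (via $H^{3/2}\hookrightarrow L^6\cap W^{1,3}$ for the gradient), which is far below $5/2$. \emph{Note:} the $H^{-1}$ norm must be taken dual to $H^1$ without boundary conditions, which matches our pairing.
\end{itemize}

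\textbf{Noise.} Here $\mathscr L_2(\ell^2,H^1)=H^1(\ell^2)$. By the chain rule, $\nabla[g(x,u)]=\nabla_xg(x,u)+\partial_yg(x,u)\nabla u$. Then \eqref{eq:g-Lip}, \eqref{eq:gx-Lip}, \eqref{eq:gy-bdd} and \eqref{eq:gy-Lip} give
\[
\|G(u)-G(v)\|_{H^1(\ell^2)}\lesssim\|u-v\|_{H^1}+\|\,|u-v|\,|\nabla v|\,\|_{L^2}\lesssim(1+\|v\|_{H^{s}})\|u-v\|_{H^{s}}
\]
for $s\in(3/2,5/2)$, so $\rho=1$ with strict sub-criticality. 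Finally, $G_0\in L^\infty_t H^1(\ell^2)$ follows from \eqref{eq:g0-assumption_intermediate}.
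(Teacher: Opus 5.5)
Your overall strategy matches the paper's. You verify the hypotheses of Theorem~\ref{Thm:LWP_variational} with $\mathcal V={}_\nu H^3$ and $\mathcal H=H^1$. Coercivity comes from $\int(u-\Delta u)\cdot Au$ plus Neumann elliptic regularity. $F_2$ is handled in the divergence form $\operatorname{div}(\nabla u\times u)$ tested against $\phi\in H^1$, with $H^{3/2}\hookrightarrow L^6\cap W^{1,3}$. $G$ is handled by the chain rule. The blow-up criterion is read off from strict sub-criticality.

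However, the interpolation identification on which every one of your sub-criticality checks rests is wrong. Since $\mathcal V^*={}_\nu H^{-1}$ and $\mathcal V={}_\nu H^3$, one has $\mathcal V_\beta=[{}_\nu H^{-1},{}_\nu H^3]_\beta={}_\nu H^{4\beta-1}$, not ${}_\nu H^{6\beta-1}$. Your formula even gives $\mathcal V_{1/2}=H^2\neq\mathcal H$. Consequently, strict sub-criticality requires $s<5/3$ for the cubic terms ($\rho=2$, $\beta<2/3$) and $s<2$ for $F_2$ and $G$ ($\rho=1$, $\beta<3/4$). Your thresholds of $s<3$ and $s<5/2$ are too large.

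\textbf{Where this breaks.} Your treatment of $F_1$ fails. Writing $s=4\beta-1$, the condition \eqref{ineq:subcriticality condition} reads $(2\beta-1)(\rho+1)=\tfrac{3}{2}(s-1)$ for $\rho=2$. For your choice $s\in(2,3)$ this lies in $(\tfrac32,3)$, so the condition is violated and $F_1$ would be supercritical. The $F_1$ estimate is also never actually carried out; the bullet trails off before a concrete H\"older/Sobolev split is fixed. The same error makes your stated range $s\in(3/2,5/2)$ for $G$ partly invalid, since only $s<2$ is admissible. Your final choice $s=3/2$ for $F_2$ and $G$, i.e.\ $\beta=5/8$ with $(2\beta-1)(\rho+1)=1/2$, is fine and is exactly the paper's.

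\textbf{The repair.} You need $\||u|^2u-|v|^2v\|_{H^1}\lesssim(\|u\|_{H^s}^2+\|v\|_{H^s}^2)\|u-v\|_{H^s}$ for some $s\in[4/3,5/3)$. There are two ways to get it.
\begin{enumerate}
\item Your $L^\infty$ idea works if you restrict to $s\in(3/2,5/3)$. Then $H^s\hookrightarrow L^\infty$ in $d\le3$ bounds all the gradient terms crudely, and $\tfrac32(s-1)\in(\tfrac34,1)$, so the estimate is strictly sub-critical. This is a slightly more elementary variant of the paper's argument.
\item The paper works at the scaling-optimal $s=4/3$, i.e.\ $\beta=7/12$. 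It uses H\"older with $\tfrac{2}{18}+\tfrac{7}{18}+\tfrac12=1$ and the embedding $H^{4/3}\hookrightarrow L^{18}\cap W^{1,18/7}$, which gives $(2\beta-1)(\rho+1)=1/2$.
\end{enumerate}
With either choice every term is strictly sub-critical, and the blow-up criterion \eqref{eq:blow up criterion in intermediate} follows as you intended.
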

\begin{proof}
We verify the assumptions of Theorem \ref{Thm:LWP_variational} in our framework. To this end, we first check that the linear part of \eqref{eq: LLB1} is coercive in these
spaces, i.e., that \eqref{ineq:coerc_A} holds.
For sufficiently smooth $u$ satisfying $\partial_n u=\partial_n \Delta u =0$, we obtain
\begin{align*}
    \langle u,Au\rangle_{ {}_\nu H^3, ( {}_\nu H^3)^*}
    &=\int_{\mathscr D}(u-\Delta u)\cdot
    \bigl(\alpha_1\Delta^2u+\alpha_2\Delta u+\alpha_3u\bigr)\,{\rm d}x  \\
    &=\alpha_1\|\nabla\Delta u\|_{L^2}^2
      +(\alpha_1-\alpha_2)\|\Delta u\|_{L^2}^2
      +(\alpha_3-\alpha_2)\|\nabla u\|_{L^2}^2
      +\alpha_3\|u\|_{L^2}^2 ,
\end{align*}
where we used the identities
\[
\int_{\mathscr D}u\cdot\Delta^2u\,dx=\|\Delta u\|_{L^2}^2 
\quad \text{and} \quad 
-\int_{\mathscr D}\Delta u\cdot\Delta^2u\,{\rm d}x
=\|\nabla\Delta u\|_{L^2}^2.
\]

Note that by elliptic regularity,  $\|u\|_{H^3}^2
    \eqsim (\|\nabla\Delta u\|_{L^2}^2+\|u\|_{H^1}^2\bigr)$. Hence, by interpolation, for every \(\varepsilon>0\),
\[
    \|\Delta u\|_{L^2}^2 
    \lesssim   \varepsilon\|u\|_{H^3}^2
       +C_\varepsilon\|u\|_{H^1}^2 .
\]
Therefore, choosing \(\varepsilon>0\) small enough and absorbing the
\(\|\Delta u\|_{L^2}^2\)-term into the leading term $\|u\|^2_{H^3}$, we obtain
\begin{equation}\label{ineq: linear coercivity in weakened setting}
    \langle u,Au\rangle_{ {}_\nu H^3, ( {}_\nu H^3)^*} 
    \ge \theta\|u\|_{H^3}^2-M\|u\|_{H^1}^2 .
\end{equation}
Next, we verify that the local Lipschitz property \eqref{ineq:Local_Lip_F} holds for the nonlinearity $F$, which we again study separately in $F_1, F_2, F_3$ as defined in \eqref{eq:definition of F}. 
We start with the term $F_3$. Following the same arguments as in \eqref{ineq:estimating F1 and F3}, we get  
\[\begin{aligned}
\| F_3(u) - F_3(v) \|_{H^{-1}} &\lesssim ( 1 + \|u\|_{H^{4\beta-1}}^2 + \|v\|_{H^{4\beta-1}}^2 ) \| u-v \|_{H^{4\beta-1}},
	\end{aligned}
\]
where $\beta\ge (d+3)/12$. Therefore, \eqref{ineq:Local_Lip_F} holds with $\rho=2$, and all $\beta \in (1/2, 2/3)$ and in that case the criticality condition \eqref{ineq:subcriticality condition} is satisfied with a strict inequality. For simplicity, we take $\beta=7/12$.

To control the term $F_1$, we use a duality argument. Let $\phi \in H^1$ be fixed but arbitrary.
We prove that 
\begin{equation}\label{ineq: local Lip estimate for F1 in weakened}
   \left|
\left\langle
\Delta(|u|^2u)-\Delta(|v|^2v),\phi
\right\rangle_{H^{-1},H^1}
\right|  \lesssim  \|\phi\|_{H^1}(\|u\|^2_{\mathcal V_{7/12}}+\|v\|^2_{\mathcal V_{7/12}})\|u-v\|_{\mathcal V_{7/12}}
\end{equation}
which implies that $F_1$ satisfies \eqref{ineq:Local_Lip_F} with $\rho=2$, $\beta=7/12$ and that \eqref{ineq:subcriticality condition} holds with a strict inequality.
Integrating by parts implies
\[
\left|
\left\langle
\Delta(|u|^2u)-\Delta(|v|^2v),\phi
\right\rangle_{H^{-1},H^1}
\right|  \le
\int_{\mathscr D}
\left|\nabla(|u|^2u-|v|^2v)\right|
|\nabla\phi|\,dx .
\]
Using again the identity
$
|u|^2u-|v|^2v
=
|u|^2(u-v)+(|u|^2-|v|^2)v
$ 
and elementary calculations,
\begin{equation} \label{ineq: estimating gradient for F1 in weakened}
|\nabla(|u|^2u-|v|^2v)|
\lesssim
(|u|^2+|v|^2)|\nabla(u-v)|
+
(|u|+|v|)(|\nabla u|+|\nabla v|)|u-v|.
\end{equation}
Therefore, 
\begin{equation}\label{ineq:integration by parts I and II}
    \left|
\left\langle
\Delta(|u|^2u)-\Delta(|v|^2v),\phi
\right\rangle_{H^{-1},H^1}
\right|  \le
\int_{\mathscr D}
\left|\nabla(|u|^2u-|v|^2v)\right|
|\nabla\phi|\,{\rm d}x  \lesssim I+II,
\end{equation}
where we denoted
\[
\begin{split}
I& \coloneq  \int_{\mathscr D} \left( |u|^2 + |v|^2 \right)|\nabla(u - v)||\nabla \phi| \, {\rm d}x;\\
II& \coloneq \int_{\mathscr D}
(|u|+|v|)(|\nabla u|+|\nabla v|)
|u-v|\,|\nabla\phi|\,{\rm d}x .
\end{split}
\]
We estimate \(I\) by H\"older's inequality with exponents $
\frac{2}{18}+\frac{7}{18}+\frac12=1$
\[
\begin{aligned}
I
&\le
\left(\|u\|_{L^{18}}^2+\|v\|_{L^{18}}^2\right)
\|\nabla(u-v)\|_{L^{18/7}}
\|\nabla\phi\|_{L^2}.
\end{aligned}
\]
Since \(d\le3\), the Sobolev embedding
$
H^{4/3}\hookrightarrow L^{18}\cap W^{1,18/7}$ 
gives
\[
I
\lesssim
\left(\|u\|_{H^{4/3}}^2+\|v\|_{H^{4/3}}^2\right)
\|u-v\|_{H^{4/3}}
\|\phi\|_{H^1}.
\]
For \(II\), we use H\"older's inequality with exponents $
\frac1{18}+\frac7{18}+\frac1{18}+\frac12=1$
\[
\begin{aligned}
II
&\le
\left(\|u\|_{L^{18}}+\|v\|_{L^{18}}\right)
\left(\|\nabla u\|_{L^{18/7}}+\|\nabla v\|_{L^{18/7}}\right)
\|u-v\|_{L^{18}}
\|\nabla\phi\|_{L^2}
\\
& \lesssim \left(\|u\|_{H^{4/3}}+\|v\|_{H^{4/3}}\right)^2
\|u-v\|_{H^{4/3}}
\|\phi\|_{H^1}.
\end{aligned}
\]
By combining the estimates for \(I\) and \(II\) and noting $\mathcal V_{7/12}\hookrightarrow H^{4/3}$, we obtain  \eqref{ineq: local Lip estimate for F1 in weakened}.

To control the remaining term $F_2$, we again use a duality argument. 
Integration by parts gives
\[
\begin{aligned}
\left\langle
    \Delta u\times u-\Delta v\times v,\phi
\right\rangle_{H^{-1},H^1}
&=
-\int_{\mathscr D}
(
    \nabla u\times u-\nabla v\times v
):\nabla\phi\,\rd x .
\end{aligned}
\]
Moreover, by the identity $
    \nabla u\times u-\nabla v\times v
    =
    \nabla u\times(u-v)+\nabla(u-v)\times v 
$ we get
\[
\begin{aligned}
\left|
\left\langle
    \Delta u\times u-\Delta v\times v,\phi
\right\rangle_{H^{-1},H^1}
\right|
&\leq
C\int_{\mathscr D}
|\nabla\phi|
\left(
    |\nabla u||u-v|
    +
    |v||\nabla(u-v)|
\right)\,\rd x .
\end{aligned}
\]
Hence, by H\"older's inequality with exponents $\frac12+\frac13+\frac16=1$ and the Sobolev embedding $H^{3/2}\hookrightarrow W^{1,3} \cap  L^6$, we get 
\[                                                   
\begin{aligned}                                      
\left|                                              
\left\langle                                         
\Delta u\times u-\Delta v\times v,\phi               
\right\rangle_{H^{-1},H^1}                     
\right|
&\lesssim                                      
\|\nabla\phi\|_{L^2}                                 
\left(                                               
\|\nabla u\|_{L^3}\|u-v\|_{L^6}                      
+                                                    
\|v\|_{L^6}\|\nabla(u-v)\|_{L^3}                     
\right)
\\
&\lesssim  \|\phi\|_{H^1}\left(\|u\|_{H^{3/2}}+\|v\|_{H^{3/2}} \right)\|u-v\|_{H^{3/2}}. 
\end{aligned}                                        
\]                                                   

Hence, $F_2$ satisfies \eqref{ineq:Local_Lip_F} with $\rho=1$, $\beta=5/8$ and \eqref{ineq:subcriticality condition} holds with a strict inequality.

Now, we verify the assumptions on the noise term $G$. The measurability of $G$ and the integrability condition \eqref{eq:G0-L2loc} follow by Assumption \ref{ass:noise_intermediate}. We now prove the local Lipschitz property \eqref{ineq:Local_Lip_G} with $\beta=5/8$ and $\rho=1$. For notational brevity, we suppress the dependence on the variables $t,\omega,x$. By direct calculation, one obtains  that
\[\|G(u) - G(v)\|_{\mathscr{L}_2(\ell^2,H^1)} \lesssim \|G(u) - G(v)\|_{L^2(\ell^2)}+\|\nabla G(u) - \nabla G(v)\|_{L^2(\ell^2)}.\]
 The lower order term is immediate from \eqref{eq:g-Lip}:
\begin{equation}\label{eq:G-L2-Lip-detailed}
\|G(u)-G(v)\|_{L^2(\ell^2)}
\le L\|u-v\|_{L^2}
\le L\|u-v\|_{H^{3/2}}.
\end{equation}
It remains to estimate the term with the gradient. By the chain
rule 
$$
\nabla G(u)=\nabla_x g(\cdot,u)+\partial_y g(\cdot,u)\nabla u 
$$
and thus 
\begin{align}
\nabla G(u)-\nabla G(v)
&=\bigl[\nabla_x g(\cdot,u)-\nabla_x g(\cdot,v)\bigr]
+\partial_y g(\cdot,u)\nabla(u-v)
\notag\\
&\quad+\bigl[\partial_y g(\cdot,u)-\partial_y g(\cdot,v)\bigr]\nabla v.
\label{eq:gradient-difference-G}
\end{align}
We estimate the three terms on the right-hand side of
\eqref{eq:gradient-difference-G}. First, by \eqref{eq:gx-Lip},
\begin{equation}\label{eq:G-grad-term-1}
\|\nabla_x g(\cdot,u)-\nabla_x g(\cdot,v)\|_{L^2(\ell^2)}
\le L\|u-v\|_{L^2} \lesssim \|u-v\|_{H^{3/2}}.
\end{equation}
Second, by the uniform boundedness of $\partial_y g$ in \eqref{eq:gy-bdd},
\begin{equation}\label{eq:G-grad-term-2}
\|\partial_y g(\cdot,u)\nabla(u-v)\|_{L^2(\ell^2)}
\le L\|\nabla(u-v)\|_{L^2}\lesssim \|u-v\|_{H^{3/2}}.
\end{equation}
Finally, using the Lipschitz bound \eqref{eq:gy-Lip}, H\"older's inequality, and the
Sobolev embeddings $H^1\hookrightarrow L^6$ and $H^{3/2}\hookrightarrow W^{1,3}$ we obtain
\begin{align}
\|[\partial_y g(\cdot,u)-\partial_y g(\cdot,v)]\nabla v\|_{L^2(\ell^2)}
&\le L\||u-v|\,|\nabla v|\|_{L^2}
\le L\|u-v\|_{L^6}\|\nabla v\|_{L^3}
\notag\\
&\lesssim \|u-v\|_{H^1}\|v\|_{H^{3/2}}
\lesssim \|u-v\|_{H^{3/2}}\|v\|_{H^{3/2}}.
\label{eq:G-grad-term-3}
\end{align}
By combining \eqref{eq:G-L2-Lip-detailed}--\eqref{eq:G-grad-term-3}, and noting that $\mathcal V_{5/8} \hookrightarrow H^{3/2}$, we get 
\[
\|G(u)-G(v)\|_{H^1(\ell^2)}\lesssim  (1+\|u\|_{\mathcal V_{5/8}}+\|v\|_{\mathcal V_{5/8}})\|u-v\|_{\mathcal V_{5/8}},
\]
which proves \eqref{ineq:Local_Lip_G} with $\beta=5/8$ and $\rho=1$. Moreover, the sub-criticality condition \eqref{ineq:subcriticality condition} holds with strict inequality.

\end{proof}

\subsubsection{Global well-posedness in the intermediate setting}
In this section, we present the proof of Theorem \ref{theorem:GWP_intermediate}.
Note that we cannot simply apply \cite[Theorem $3.5$]{AgVe24a} to obtain global well-posedness as in the weak setting, since the full coercivity property \eqref{ineq: coercivity for GWP2} does not hold in this case. We instead rely on the blow-up criterion \eqref{eq:blow up criterion in intermediate}, for which we prove suitable energy estimates.  In fact, we establish energy estimates for general $L^p$-moments since these will also be used in Section \ref{subsesction: GWP in strong} to prove global well-posedness in the strong setting. 

\begin{proposition}[$L^p(\Omega)$-energy estimates in $H^1$]\label{prop:energy estimate in H1} Let $d \in \{1,2,3\}$ and suppose that Assumption~\ref{ass:noise_intermediate} holds. Let $p\in[1,\infty)$ and $u_0\in L_{\mathcal F_0}^{4p}(\Omega;H^1)$. Let $(u,\sigma)$ be the maximal solution of equation \eqref{eq: LLB1} given by Proposition \ref{thm:LWP in intermediate}. Then for any $T>0$,
\begin{equation}\label{ineq:H^1 estimate}
    \mathbb E
        \sup_{t\in[0,T\wedge \sigma)}\|u(t)\|_{H^1}^{2p}
     + \mathbb E \Big ( \int_0^{T\wedge\sigma}
    \|H_{\mathrm{eff}}(u(t))\|_{H^1}^2
    \,\rd t \Big)^p
    \leq C_{p,T} (1+\bbE \|u_0\|_{H^1}^{4p}),
\end{equation}
where $\Heff(u)=\alpha\Delta u+\kappa_1u-\kappa_2|u|^2u $.

\end{proposition}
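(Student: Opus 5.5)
The plan is to use the free energy associated with the effective field as a Lyapunov functional. Set
\[
\mathcal E(u)\coloneq \frac{\alpha}{2}\|\nabla u\|_{L^2}^2-\frac{\kappa_1}{2}\|u\|_{L^2}^2+\frac{\kappa_2}{4}\|u\|_{L^4}^4 ,
\]
so that, with the Neumann condition $\partial_n u=0$, the Fr\'echet derivative is $\mathcal E'(u)=-\Heff(u)$. The drift of \eqref{eq: LLB} is $\lambda_r\Heff-\lambda_e\Delta\Heff-\gamma u\times\Heff$, and $\partial_n\Heff(u)=0$. Pairing it with $-\Heff$ therefore gives
\[
-\lambda_r\|\Heff\|_{L^2}^2-\lambda_e\|\nabla\Heff\|_{L^2}^2 ,
\]
because the precession term is pointwise orthogonal to $\Heff$. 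This produces exactly the dissipation $\|\Heff(u)\|_{H^1}^2$ in \eqref{ineq:H^1 estimate}.

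Next I would choose a constant $C_0$ such that $\Psi\coloneq\mathcal E+C_0\ge 1$ and
\[
\Psi(u)\gtrsim 1+\|u\|_{H^1}^2+\|u\|_{L^4}^4 .
\]
This is possible since $\|u\|_{L^2}^2\le C_\epsilon+\epsilon\|u\|_{L^4}^4$. Conversely, $H^1\hookrightarrow L^4$ for $d\le3$ gives $\Psi(u_0)\lesssim 1+\|u_0\|_{H^1}^4$. This explains the $4p$-th moment of $u_0$ on the right-hand side.

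To justify It\^o's formula for $\Psi(u)$ (and then $\Psi(u)^p$) up to the stopping times
\[
\sigma_n\coloneq\inf\{t<\sigma:\|u(t)\|_{H^1}\ge n\}\wedge T ,
\]
I would use the regularity $u\in L^2(H^3)\cap C(H^1)$ from Proposition \ref{thm:LWP in intermediate}. Then $\Heff(u)\in L^2(H^1)$ pairs with the drift in $L^2(H^{-1})$. If needed, I would use a smooth approximation of the $L^4$ term.

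The It\^o correction is
\[
\tfrac12\sum_k\mathcal E''(u)[g_k,g_k]=\tfrac{\alpha}{2}\|\nabla G(u)\|^2-\tfrac{\kappa_1}{2}\|G(u)\|^2+\tfrac{\kappa_2}{4}\int\bigl(2|u|^2|g|^2+4(u\cdot g)^2\bigr).
\]
By Assumption \ref{ass:noise_intermediate}, namely the bound on $g(\cdot,0)$ in $W^{1,\infty}$, linear growth, and boundedness of $\partial_y g$, this is bounded by $C(1+\|u\|_{H^1}^2+\|u\|_{L^4}^4)\lesssim\Psi$.

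For the martingale part, the quadratic variation is $\sum_k(\Heff(u),g_k(u))^2$. I would not estimate this by $\|\Heff\|^2$. Instead I would integrate by parts once more, writing
\[
(\Heff,g_k)=-\alpha(\nabla u,\nabla g_k)+\kappa_1(u,g_k)-\kappa_2(|u|^2u,g_k),
\]
which gives
\[
\sum_k(\Heff,g_k)^2\lesssim\bigl(\|u\|_{H^1}(1+\|u\|_{H^1})+\|u\|_{L^4}^4+1\bigr)^2\lesssim\Psi^2 .
\]
Applying It\^o to $\Psi^p$, the correction and drift terms are bounded by $C\Psi^p$ plus the negative dissipation term $-p\Psi^{p-1}\|\Heff\|_{H^1}^2$. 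The Burkholder--Davis--Gundy inequality then yields
\[
\mathbb E\Bigl(\int_0^{\sigma_n}\Psi^{2p}\Bigr)^{1/2}\le\epsilon\,\mathbb E\sup_{t\le\sigma_n}\Psi^p+C_\epsilon\,\mathbb E\int_0^{\sigma_n}\Psi^p .
\]
Gronwall's lemma gives $\mathbb E\sup_{t\le\sigma_n}\Psi^p\le C_{p,T}(1+\mathbb E\|u_0\|_{H^1}^{4p})$ uniformly in $n$, and Fatou's lemma lets $n\to\infty$.

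For the second term in \eqref{ineq:H^1 estimate}, I would take the $p=1$ identity integrated in time and raise it to the power $p$. This gives
\[
\Bigl(\int_0^{\sigma_n}\|\Heff\|_{H^1}^2\Bigr)^p\lesssim\Psi(u_0)^p+\Bigl(\int_0^{\sigma_n}\Psi\Bigr)^p+\sup_t|M_t|^p .
\]
BDG together with the bound $\Psi^2$ on the quadratic variation controls the last term by $\mathbb E\sup\Psi^p$, which is already estimated.

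I expect the main obstacle to be the rigorous It\^o formula for the nonquadratic functional $\mathcal E$ in the intermediate Gelfand triple. Closely tied to this is handling the martingale without $\Heff$, via the integration by parts above, so that the constants close. This relies on $\partial_n u=0$ and on the bound $|u|^3|g|\lesssim|u|^3+|u|^4$, which is where the linear growth of $g$ and the $L^\infty$ bound on $g(\cdot,0)$ are used.
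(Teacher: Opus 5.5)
Your proposal is correct. It shares the paper's skeleton: the shifted Ginzburg--Landau energy, the cancellation $\langle b(u),-\Heff(u)\rangle=-\lambda_r\|\Heff(u)\|_{L^2}^2-\lambda_e\|\nabla\Heff(u)\|_{L^2}^2$, the It\^o correction bounded by the energy, and BDG plus Gronwall under stopping. Where you genuinely differ is the martingale term.

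The paper uses the crude bound \eqref{eq:H-G-mart}, $\sum_k|\langle\Heff(u),G_k\rangle|^2\le C_T(1+\|u\|_{L^2}^2)\|\Heff(u)\|_{L^2}^2$, and absorbs $\varepsilon\,\bbE D_{R,n}(t)^p$ into the dissipation. It is then left with $C_\varepsilon\bbE\sup_{s\le t}\|u(s)\|_{L^2}^{2p}$. This term is not time-integrated, so Gronwall cannot handle it, and the paper therefore needs the separate $L^2$-moment bound of Lemma~\ref{lem:L2-estimate}.

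Your integration by parts, which uses $\partial_n u=0$, removes $\Heff$ from the quadratic variation and bounds it by $\Psi^2$. The noise bounds this requires are exactly those in \eqref{eq:noise-trace}; for the cubic piece, $\sum_k(|u|^2u,g_k)^2\le\|u\|_{L^4}^4\,\||u|G\|_{L^2(\ell^2)}^2$. The estimate then closes on its own. You can use It\^o for $\Psi^p$, or simply raise the pathwise inequality to the power $p$ and use $(\int\Psi^2)^{p/2}\le(\sup\Psi)^{p/2}(\int\Psi)^{p/2}$. Lemma~\ref{lem:L2-estimate} becomes unnecessary, and the dissipation moment is recovered afterwards from the $p=1$ identity.

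On the It\^o formula, the paper does not work from $u\in L^2(H^3)\cap C(H^1)$ alone. It applies an $L^q$-It\^o lemma that needs $\nabla\Heff(u)\in L^2(L^4)$, obtains this on $[\delta,t]$ from the parabolic regularization in Lemma~\ref{lemma:parab reg variational}, and then lets $\delta\downarrow0$.

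Your route can bypass that lemma. The integrability you list is enough to pass to the limit in the finite-dimensional It\^o formula for spectral Galerkin projections $P_Nu$ of the Neumann Laplacian: $|u|^2u,\Heff(u)\in L^2(0,\tau;H^1)$, drift in $L^2(0,\tau;{}_\nu H^{-1})$, and $G(\cdot,u)\in L^2(0,\tau;\mathscr L_2(\ell^2,H^1))$. Note that it is the process that must be approximated, not the functional, which is already a polynomial. As written, this step is only sketched.
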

\begin{remark}
    The assumption $u_0\in L_{\mathcal F_0}^{4p}(\Omega;H^1)$  can be replaced by $\bbE \mathscr E(u_0)^p<\infty$, where $\mathscr E(v)$ is given in \eqref{eq:corrected-GL-energy intro}. Moreover, using Lemma \ref{lem:d4-endpoint-elliptic} below, we are able to control the $H^3$-norm of $u$ by the $H^1$-norm of $\Heff(u)$. This will allow us to obtain the more classical energy estimate \eqref{eq:intermediate-main-energy-estimate 2}.     
\end{remark}

Before presenting the proof of Proposition \ref{prop:energy estimate in H1}, we need some preliminary results.

\begin{lemma}\label{lem:noise-trace}
Let $d \in \{1,2,3\}$ and suppose that Assumption~\ref{ass:noise_intermediate} holds. Then, for any $T>0$, there is a constant $C_T>0$ such that,
for a.e.\,$(t,\omega)\in[0,T]\times\Omega$ and all $v\in H^1$,
\begin{equation}\label{eq:noise-trace}
\|G(t,\omega,v)\|_{L^2(\ell^2)}^2
+\|\nabla G(t,\omega,v)\|_{L^2(\ell^2)}^2
+\||v|G(t,\omega,v)\|_{L^2(\ell^2)}^2
\le C_T(1+\|v\|_{H^1}^2+\|v\|_{L^4}^4).
\end{equation}
Moreover, if $v\in H^1$ is such that $\Heff(v)\in L^2$, then
\begin{equation}\label{eq:H-G-mart}
\sum_{k\ge1}|\langle \Heff(v),G_k(t,\omega,v)\rangle_{L^2}|^2
\le C_T(1+\|v\|_{L^2}^2)\|\Heff(v)\|_{L^2}^2.
\end{equation}
\end{lemma}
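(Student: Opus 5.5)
The three terms in \eqref{eq:noise-trace} are handled separately, using only the pointwise consequences of Assumption~\ref{ass:noise_intermediate}. We write $g^{(0)}=g(t,\omega,\cdot,0)$. By \eqref{eq:g-Lip} we have $\|G(v)\|_{\ell^2}\le |g^{(0)}|_{\ell^2}+L|v|$ pointwise. By \eqref{eq:g0-assumption_intermediate}, $\|g^{(0)}\|_{L^\infty(\ell^2)}\le K_T$, and since $\mathscr D$ is bounded this gives
\[
\|G(v)\|_{L^2(\ell^2)}^2\lesssim K_T^2|\mathscr D|+L^2\|v\|_{L^2}^2 .
\]
For the gradient we use the chain rule $\nabla G(v)=\nabla_x g(\cdot,v)+\partial_y g(\cdot,v)\nabla v$. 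By \eqref{eq:gx-Lip}, $|\nabla_x g(\cdot,v)|_{\ell^2}\le |\nabla_x g^{(0)}|_{\ell^2}+L|v|$, and by \eqref{eq:gy-bdd}, $|\partial_yg\,\nabla v|\le L|\nabla v|$. Hence $\|\nabla G(v)\|_{L^2(\ell^2)}^2\lesssim K_T^2+\|v\|_{H^1}^2$. For the weighted term, the same pointwise bound gives $|v|\,|G(v)|_{\ell^2}\le K_T|v|+L|v|^2$, so
\[
\||v|G(v)\|_{L^2(\ell^2)}^2\lesssim K_T^2\|v\|_{L^2}^2+\|v\|_{L^4}^4 .
\]
Summing the three bounds yields \eqref{eq:noise-trace}. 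For $v\in H^1$ all chain-rule manipulations are justified by density of smooth functions together with the $C^1$-regularity and Lipschitz bounds on $g$.

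For \eqref{eq:H-G-mart}, the naive approach fails: Cauchy--Schwarz in $x$ followed by summation over $k$ gives
\[
\sum_k|\langle \Heff(v),G_k(v)\rangle|^2\le \|\Heff(v)\|_{L^2}^2\sum_k\|G_k(v)\|_{L^2}^2=\|\Heff(v)\|_{L^2}^2\|G(v)\|_{L^2(\ell^2)}^2 .
\]
Combined with the bound $\|G(v)\|_{L^2(\ell^2)}^2\le C_T(1+\|v\|_{L^2}^2)$ from the first step, this is exactly the claim. It is cleaner still to use Minkowski's integral inequality in $\ell^2$:
\[
\Bigl(\sum_k|\langle h,G_k\rangle|^2\Bigr)^{1/2}=\Bigl\|\int_{\mathscr D} h\cdot G(v)\,\rd x\Bigr\|_{\ell^2}\le \int_{\mathscr D}|h|\,|G(v)|_{\ell^2}\,\rd x\le \|h\|_{L^2}\|G(v)\|_{L^2(\ell^2)} ,
\]
with $h=\Heff(v)$.

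The only points needing care are whether the bounds hold for a.e.\ $(t,\omega)$ and whether the constant is allowed to depend on $T$. Both are handled by the $L^\infty$ bound $K_T$ in \eqref{eq:g0-assumption_intermediate}, so $C_T$ depends only on $K_T$, $L$ and $|\mathscr D|$.

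A further technical point is making sense of $\nabla G(v)$ for $v\in H^1$ only. I would first prove the estimates for $v\in C^1(\overline{\mathscr D})$, then pass to the limit using the Lipschitz continuity of $v\mapsto G(v)$ in $L^2(\ell^2)$ and weak lower semicontinuity of the $H^1(\ell^2)$-norm. I do not expect a genuine obstacle; the lemma is essentially bookkeeping of the growth bounds.
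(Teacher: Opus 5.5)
Your proposal is correct and follows the paper's proof essentially line by line: pointwise Lipschitz and chain-rule bounds for the three terms in \eqref{eq:noise-trace}, then Cauchy--Schwarz combined with the $L^2(\ell^2)$ growth bound for \eqref{eq:H-G-mart}. That Cauchy--Schwarz step is exactly the paper's argument, so your remark that this \enquote{naive approach fails} is unfounded. The one small difference is in the weighted term $\||v|G(v)\|_{L^2(\ell^2)}$: you use $\|g^{(0)}\|_{L^\infty(\ell^2)}\le K_T$ directly, whereas the paper uses only $\Phi=\|g^{(0)}\|_{\ell^2}\in H^1\hookrightarrow L^4$ together with H\"older, which requires less integrability of $g^{(0)}$ than the $W^{1,\infty}$ bound you invoke.
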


\begin{proof}
Let $\Phi=\|g(t,\omega,\cdot,0)\|_{\ell^2}$ and
$\Psi=\|\nabla_x g(t,\omega,\cdot,0)\|_{\ell^2}$. By
\eqref{eq:g0-assumption_intermediate},
$
\Phi\in H^1\hookrightarrow L^4$ and $
\Psi\in L^2$, 
uniformly on $[0,T]\times\Omega$. By \eqref{eq:g-Lip},
\begin{equation}\label{eq:G-L2-growth}
\|G(t,\omega,v)\|_{L^2(\ell^2)}^2
\le C_T+C\|v\|_{L^2}^2,
\end{equation}
while by the chain rule and \eqref{eq:gx-Lip}--\eqref{eq:gy-bdd},
\[
\|\nabla G(t,\omega,v)(x)\|_{\ell^2}
\le \Psi(x)+L|v(x)|+L|\nabla v(x)|,
\]
so that
\[
\|\nabla G(t,\omega,v)\|_{L^2(\ell^2)}^2
\le C_T+C\|v\|_{H^1}^2.
\]
Furthermore,
\[
\begin{aligned}
\||v|G(t,\omega,v)\|_{L^2(\ell^2)}^2
\lesssim
\||v|\Phi\|_{L^2}^2+\|v\|_{L^4}^4
\le
\|v\|_{L^4}^2\|\Phi\|_{L^4}^2+\|v\|_{L^4}^4
\le
C_T\|v\|_{H^1}^2+C\|v\|_{L^4}^4,
\end{aligned}
\]
where we used H\"older's inequality and the embedding
$H^1\hookrightarrow L^4$. This proves
\eqref{eq:noise-trace}.
Finally, by Cauchy--Schwarz's inequality and \eqref{eq:G-L2-growth},
\[
\sum_{k\ge1}|\langle \Heff(v),G_k(t,\omega,v)\rangle|^2
\le \|\Heff(v)\|_{L^2}^2\|G(t,\omega,v)\|_{L^2(\ell^2)}^2
\le C_T(1+\|v\|_{L^2}^2)\|\Heff(v)\|_{L^2}^2,
\]
and we have concluded.

\end{proof}

We now establish an $L^p(\Omega)$-estimate in $L^2$. The proof follows the same argument as in Proposition $3.3$ in \cite{GoSoTr25}, adapted for our more general noise.

\begin{lemma}[$L^p(\Omega)$-energy estimates in $L^2$]\label{lem:L2-estimate} 
Let $d\in \{1,2,3\}$ and suppose that Assumption~\ref{ass:noise_intermediate} holds. Let $p \in [1,\infty)$ and  $u_0 \in L^{2p}_{\mathcal F_0}(\Omega;H^1)$. Let  $(u,\sigma)$ be the maximal solution of equation \eqref{eq: LLB1} given by Proposition \ref{thm:LWP in intermediate}. Then, for every $p \in [1,\infty)$ and $T>0$, 
\begin{equation}\label{eq:L2-estimate}
\bbE \sup_{t\in[0,T\wedge\sigma)}\|u(t)\|_{L^2}^{2p} +
   \bbE  \Bigl(
        \int_0^{T\wedge\sigma}
        \|\Delta u(t)\|_{L^2}^2\,\rd t
    \Bigr)^p
\le C_{p,T}(1+\mathbb E\|u_0\|_{L^2}^{2p}).
\end{equation}
\end{lemma}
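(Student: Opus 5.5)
We apply It\^o's formula to $\|u\|_{L^2}^2$ along the maximal solution $(u,\sigma)$, stopped at localizing times $\tau_n\coloneq\inf\{t<\sigma:\|u(t)\|_{H^1}\ge n\}\wedge T$. This is legitimate: by Proposition~\ref{thm:LWP in intermediate}, $u\in L^2(0,\tau_n;{}_\nu H^3)\cap C([0,\tau_n];H^1)$, and the drift $-Au+F(u)$ lies in $L^2(0,\tau_n;({}_\nu H^3)^*)$, which is more than needed for the $L^2$ It\^o formula in the triple $({}_\nu H^2,L^2,({}_\nu H^2)^*)$.

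It\^o's formula gives
\[
\rd\|u\|_{L^2}^2 = \bigl(-2\langle u,Au-F(u)\rangle+\|G(u)\|_{L^2(\ell^2)}^2\bigr)\rd t + 2\sum_k (u,G_k(u))_{L^2}\,\rd W^k.
\]
We estimate the drift in three pieces.
\begin{itemize}
\item By \eqref{eq:formula for <u,Au>} and interpolation, $\langle u,Au\rangle\ge \theta\|\Delta u\|_{L^2}^2-M\|u\|_{L^2}^2$.
\item By the computation in the proof of Theorem~\ref{theorem:GWP_d=2_L2_time}, $-\langle u,F_1(u)+F_3(u)\rangle\ge0$. This computation uses only integration by parts with Neumann conditions, so it is valid for $d\le3$.
\item The term $F_2$ drops out because $(\Delta u\times u)\cdot u=0$ pointwise.
\end{itemize}
Since $g$ is independent of the gradient variable, \eqref{eq:G-L2-growth} yields $\|G(u)\|_{L^2(\ell^2)}^2\le C_T(1+\|u\|_{L^2}^2)$. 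Therefore
\[
\|u(t)\|_{L^2}^2+2\theta\int_0^t\|\Delta u\|_{L^2}^2\,\rd s\le \|u_0\|_{L^2}^2+C_T\int_0^t(1+\|u\|_{L^2}^2)\,\rd s+M_t,
\]
where $M_t$ is the stochastic integral above.

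For the $p$-th moments, we raise this inequality to the power $p$, take $\sup_{s\le t\wedge\tau_n}$, and then take expectations. The martingale term is handled with the Burkholder--Davis--Gundy inequality. Its quadratic variation satisfies
\[
\sum_k|(u,G_k(u))|^2\le\|u\|_{L^2}^2\|G(u)\|_{L^2(\ell^2)}^2\le C_T(1+\|u\|_{L^2}^4).
\]
Hence
\[
\bbE\sup|M|^p\lesssim \bbE\Bigl(\int_0^{t\wedge\tau_n}(1+\|u\|_{L^2}^4)\,\rd s\Bigr)^{p/2}\le \epsilon\,\bbE\sup_{s\le t\wedge\tau_n}\|u\|_{L^2}^{2p}+C_\epsilon\,\bbE\int_0^{t\wedge\tau_n}(1+\|u\|_{L^2}^{2p})\,\rd s.
\]
We absorb the $\epsilon$-term into the left-hand side; this is possible because the stopped quantities are finite, as $\|u\|_{H^1}\le n$ on $[0,\tau_n]$. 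Gronwall's lemma then bounds $\bbE\sup_{s\le T\wedge\tau_n}\|u\|_{L^2}^{2p}$. Feeding this back gives the bound for $\bbE(\int_0^{T\wedge\tau_n}\|\Delta u\|_{L^2}^2\,\rd s)^p$, with constants independent of $n$.

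Finally, we let $n\to\infty$ and use monotone convergence. Here $\tau_n\uparrow\sigma\wedge T$ because of the blow-up criterion \eqref{eq:blow up criterion in intermediate} and continuity in $H^1$. This gives \eqref{eq:L2-estimate}, with $\bbE\|u_0\|_{L^2}^{2p}$ finite by assumption.

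The main obstacle is the careful justification of It\^o's formula together with the sign of the $F_1$ contribution. Specifically, we need $\langle u,\Delta(|u|^2u)\rangle=-\int\nabla u:\nabla(|u|^2u)\le0$, which relies on the Neumann boundary condition and requires $|u|^2u\in H^1$. I would verify this first for $u\in{}_\nu H^3$ in $d\le3$, where it holds since $H^3\hookrightarrow W^{1,\infty}$.
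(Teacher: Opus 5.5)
Your proposal is correct and follows essentially the same route as the paper: It\^o's formula for $\|u\|_{L^2}^2$ in the triple $({}_\nu H^2, L^2, ({}_\nu H^2)^*)$ up to $H^1$-stopping times, the precession term vanishing since $u\cdot(\Delta u\times u)=0$, the sign of the cubic terms via $\nabla(|u|^2u):\nabla u\ge 0$, and then BDG, Young, Gronwall and a limit in the stopping times. The differences are cosmetic: the paper writes the drift through $\Heff$ rather than $-Au+F(u)$, and it also intersects with a localizing sequence $\sigma_n$ and concludes with Fatou, whereas you use the blow-up criterion to get $\tau_n<\sigma$ and $\tau_n\uparrow\sigma\wedge T$ and conclude with monotone convergence.
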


\begin{proof}
Let $(\sigma_n)_{n\geq1}$ be a localizing sequence for the maximal solution
$(u,\sigma)$. For $R>0$ and $n\geq1$, set
\begin{equation}\label{eq:stop time tRn}
\tau_{R,n}
 \coloneq 
T\wedge\sigma_n\wedge
\inf\left\{
0\leq t<\sigma_n:
\|u(t)\|_{H^1}\geq R
\right\},
\end{equation}
with the convention $\inf\emptyset=\infty$, and define
\[
    Y_{R,n}(t) \coloneq \|u(t\wedge\tau_{R,n})\|_{L^2}^2.
\]

We note that 
$
    u\in C([0,\tau_{R,n}];H^1)
    \cap L^2(0,\tau_{R,n}; {}_\nu H^3)
$. Applying It\^o formula in the Gelfand triple
$
     {}_\nu H^2\hookrightarrow L^2\hookrightarrow  ({}_\nu H^{2})^*
$
 gives

\[
\begin{aligned}
Y_{R,n}(t)
&=
\|u_0\|_{L^2}^2
+
2\int_0^{t\wedge\tau_{R,n}}
\left\langle
    \lambda_r\Heff(u)-\lambda_e\Delta\Heff(u)
    -\gamma u\times\Heff(u),
    u
\right\rangle_{ ({}_\nu H^{2})^*, {}_\nu H^2}\,\rd s
\\
&\quad
+
\int_0^{t\wedge\tau_{R,n}}
\|G(s,u)\|_{\mathcal L_2(\ell^2,L^2)}^2\,\rd s
+
2\sum_{k\geq1}
\int_0^{t\wedge\tau_{R,n}}
\langle u(s),G_k(s,u(s))\rangle_{L^2}\,\rd W_s^k .
\end{aligned}
\]
Since $
    \|G(s,u)\|_{\mathcal L_2(\ell^2,L^2)}^2
   =
    \|G(s,u)\|_{L^2(\ell^2)}^2
$ and 
$ u\cdot(u\times\Heff(u))=0$, 
we obtain
\[
\begin{aligned}
Y_{R,n}(t)&=\|u_0\|_{L^2}^2
+2\int_0^{t\wedge\tau_{R,n}}
\left\langle
    \lambda_r\Heff(u)-\lambda_e\Delta\Heff(u),
    u
\right\rangle_{ {}_\nu H^{-2}, {}_\nu H^2}\,\rd s
\\
&\hspace{5mm} +\int_0^{t\wedge\tau_{R,n}}\|G(s,u)\|_{L^2(\ell^2)}^2\,\rd s
+N_{R,n}(t),
\end{aligned}
\]
where
\[
    N_{R,n}(t)
     \coloneq 
    2\sum_{k\geq1}
    \int_0^{t\wedge\tau_{R,n}}
    \langle u(s),G_k(s,u(s))\rangle_{L^2}\,\rd W_s^k .
\]
Integration by parts gives
\[
\begin{aligned}
\langle u,\Heff(u)\rangle_{L^2}
&=
\alpha\langle u,\Delta u\rangle_{L^2}
+\kappa_1\|u\|_{L^2}^2
-\kappa_2\|u\|_{L^4}^4 \\
&=
-\alpha\|\nabla u\|_{L^2}^2
+\kappa_1\|u\|_{L^2}^2
-\kappa_2\|u\|_{L^4}^4
\leq C\|u\|_{L^2}^2 .
\end{aligned}
\]
Moreover,
\[
\begin{aligned}
-\langle \Delta\Heff(u),u\rangle_{( {}_\nu H^2)^*, {}_\nu H^2}
&=
-\langle \Heff(u),\Delta u\rangle_{L^2} \\
&=
-\alpha\|\Delta u\|_{L^2}^2
-\kappa_1\langle u,\Delta u\rangle_{L^2}
+\kappa_2\langle |u|^2u,\Delta u\rangle_{L^2} \\
&=
-\alpha\|\Delta u\|_{L^2}^2
+\kappa_1\|\nabla u\|_{L^2}^2
-\kappa_2\int_{\mathscr D}
\nabla(|u|^2u):\nabla u\,\rd x 
\\
& \le -\alpha\|\Delta u\|_{L^2}^2
+|\kappa_1|\|\nabla u\|_{L^2}^2,
\end{aligned}
\]
since 
\[
    \nabla(|u|^2u):\nabla u
    =
    |u|^2|\nabla u|^2
    +
    2\sum_{j=1}^d (u\cdot\partial_j u)^2
    \geq 0.
\]
Finally, by interpolation and Young's inequality,
\[
    \|\nabla u\|_{L^2}^2
    \leq
    \varepsilon\|\Delta u\|_{L^2}^2
    +
    C_\varepsilon\|u\|_{L^2}^2 .
\]
Choosing $\varepsilon>0$ sufficiently small, we get
\[
2\left\langle
    \lambda_r\Heff(u)-\lambda_e\Delta\Heff(u),
    u
\right\rangle_{( {}_\nu H^2)^*, {}_\nu H^2}
\leq
-c\|\Delta u\|_{L^2}^2
+
C(1+\|u\|_{L^2}^2).
\]
Together with \eqref{eq:G-L2-growth}, this gives
\begin{equation}\label{eq:L2-pathwise}
\begin{aligned}
&\sup_{r\leq t}Y_{R,n}(r)
+
c\int_0^{t\wedge\tau_{R,n}}\|\Delta u(s)\|_{L^2}^2\,\rd s
\\
&\qquad \leq
\|u_0\|_{L^2}^2
+
C\int_0^t
\bigl(
    1+\sup_{\rho\leq s}Y_{R,n}(\rho)
\bigr)\,\rd s
+
\sup_{r\leq t}|N_{R,n}(r)|.
\end{aligned}
\end{equation}
Raising \eqref{eq:L2-pathwise} to the power $p$, taking expectations, and using Jensen's inequality, we obtain
\begin{equation}
\begin{aligned}\label{eq:L2-power-before-BDG}
&\mathbb E\sup_{r\leq t}Y_{R,n}(r)^p
+
c\mathbb E
\Bigl(
    \int_0^{t\wedge\tau_{R,n}}
    \|\Delta u(s)\|_{L^2}^2\,\rd s
\Bigr)^p
\\
&\qquad
\leq
C_{p,T}
\Bigl(
    1+\mathbb E\|u_0\|_{L^2}^{2p}
\Bigr)
+
C_{p,T}
\int_0^t
\mathbb E\sup_{\rho\leq s}Y_{R,n}(\rho)^p\,\rd s
+
C_p\mathbb E\sup_{r\leq t}|N_{R,n}(r)|^p .
\end{aligned}
\end{equation}
By the Burkholder--Davis--Gundy inequality and Cauchy--Schwarz's inequality,
\[
\begin{aligned}
\mathbb E\sup_{r\leq t}|N_{R,n}(r)|^p
&\le C_p
\mathbb E
\Bigl(
    \int_0^{t\wedge\tau_{R,n}}
    \sum_{k\geq1}
    |\langle u(s),G_k(s,u(s))\rangle_{L^2}|^2
    \,\rd s
\Bigr)^{p/2}
\\
&\le C_p
\mathbb E
\Bigl(
    \int_0^{t\wedge\tau_{R,n}}
    \|u(s)\|_{L^2}^2
    \|G(s,u(s))\|_{L^2(\ell^2)}^2
    \,\rd s
\Bigr)^{p/2}.
\end{aligned}
\]
Using \eqref{eq:G-L2-growth}, we get
\[
\begin{aligned}
\mathbb E\sup_{r\leq t}|N_{R,n}(r)|^p
&\le C_{p,T}
\mathbb E
\Bigl[
\Bigl(1+\sup_{r\leq t}Y_{R,n}(r)
\Bigr)^{p/2}
\Bigl(
    \int_0^t
    (
        1+\sup_{\rho\leq s}Y_{R,n}(\rho)
    )\,\rd s
\Bigr)^{p/2}
\Bigr].
\end{aligned}
\]
Therefore, by Young's inequality and H\"older's inequality, for every
$\varepsilon>0$,
\[
\begin{aligned}
\mathbb E\sup_{r\leq t}|N_{R,n}(r)|^p
&\leq
\epsilon \,
\mathbb E\sup_{r\leq t}Y_{R,n}(r)^p
+
C_{p,T,\varepsilon}
\Bigl(
    1+
    \int_0^t
    \mathbb E\sup_{\rho\leq s}Y_{R,n}(\rho)^p\,\rd s
\Bigr).
\end{aligned}
\]
By choosing $\varepsilon>0$ sufficiently small and absorbing the first term into
the left-hand side of \eqref{eq:L2-power-before-BDG},
\[
\begin{aligned}
&\mathbb E\sup_{r\leq t}Y_{R,n}(r)^p
+
c\mathbb E
\Bigl(
    \int_0^{t\wedge\tau_{R,n}}
    \|\Delta u(s)\|_{L^2}^2\,\rd s
\Bigr)^p
\\
&\qquad
\leq
C_{p,T}
(
    1+\mathbb E\|u_0\|_{L^2}^{2p}
)
+
C_{p,T}
\int_0^t
\mathbb E\sup_{\rho\leq s}Y_{R,n}(\rho)^p\,\rd s .
\end{aligned}
\]
We can now use Gronwall's lemma to obtain
\begin{equation}\label{ineq:idk}
\bbE 
    \sup_{t\in[0,T]}\|u(t\wedge\tau_{R,n})\|_{L^2}^{2p}
    +
   \bbE  \Bigl(
        \int_0^{T\wedge\tau_{R,n}}
        \|\Delta u(s)\|_{L^2}^2\,\rd s
    \Bigr)^p
\leq
C_{p,T}
(
    1+\mathbb E\|u_0\|_{L^2}^{2p}
).    
\end{equation}
and letting $R\to \infty$ and then $n\to \infty$ in \eqref{ineq:idk}, together with Fatou's lemma yields \eqref{eq:L2-estimate}.

\end{proof}

Next, we establish a parabolic regularization result for $u$. This will allow us to obtain an It\^o formula for the term $\|u(t)\|_{L^4}^4$, which will then be used in the proof of Proposition \ref{prop:energy estimate in H1}. We note that in \cite[Proposition 4.9]{Cahn_Hill_crit_spaces} the authors obtained a similar It\^o formula via a regularization argument based on Yosida approximations. 

\begin{lemma}[Parabolic regularization]\label{lemma:parab reg variational}
Let $d \in \{1,2,3\}$ and suppose that Assumption~\ref{ass:noise_intermediate} holds. Let $(u,\sigma)$ be the maximal solution of equation \eqref{eq: LLB1} given by Proposition \ref{thm:LWP in intermediate}. Then,
    \begin{equation}\label{eq:parabolic reg of intermediate sol}
        u \in H^{\theta,r}_{\mathrm{loc}}(0,\sigma; {}_\nu H^{3-4\theta,q}) \, \text{ a.s.} \quad 0\le \theta<1/2, \quad r,q \in [2,\infty).
    \end{equation}
\end{lemma}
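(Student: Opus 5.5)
The plan is to use the instantaneous regularization results for critical SPDEs of Agresti--Veraar (\cite[Section 6]{AgVe25}, in the form of their bootstrap theorems for $L^p_\kappa(L^q)$-solutions) together with the stochastic maximal regularity of Lemma \ref{lemma:SMR}. By localization in time, we may work on $[s_0,\sigma_n]$ with $0<s_0$ and $\sigma_n$ a localizing sequence on which $u\in L^2(s_0,\sigma_n;{}_\nu H^3)\cap C([s_0,\sigma_n];H^1)$ is bounded. Since the claim is local on $(0,\sigma)$, it suffices to show the regularity on $(s_0,\sigma_n)$ for every such pair.

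\textbf{Step 1 (increase time integrability, $q=2$).} We view $u$ as an $L^2$-solution in the setting $X_0=H^{-1}=({}_\nu H^{1})^*$, $X_1={}_\nu H^{3}$ with $\kappa=0$. Because the nonlinearities $F_j$ and $G$ were shown in Proposition \ref{thm:LWP in intermediate} to be \emph{strictly} subcritical there, \cite[Theorem 6.3/Proposition 6.8]{AgVe25} (the $p$-bootstrapping step) applies. Using $u(s_0)\in H^1$ and the trace embedding $H^1\hookrightarrow (H^{-1},{}_\nu H^3)_{1-\frac{1+\kappa}{r},r}$ for suitable weights $\kappa=r/2-1-\epsilon$, we restart the equation at time $s_0$ in $L^r_{\kappa}(s_0,\cdot;{}_\nu H^3)$ for any $r\in(2,\infty)$. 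Uniqueness of maximal solutions identifies the new solution with $u$. This yields $u\in H^{\theta,r}_{\rm loc}(0,\sigma;{}_\nu H^{3-4\theta})$ for all $r<\infty$ and $\theta<1/2$.

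\textbf{Step 2 (increase space integrability).} Next we move to the scale ${}_\nu H^{-1,q}\to{}_\nu H^{3,q}$ for $q>2$, for which Lemma \ref{lemma:SMR} again gives $\mathcal{SMR}^\bullet_{r,\kappa}$. We must recheck the local Lipschitz estimates of $F_1,F_2,F_3$ and $G$ in $({}_\nu H^{-1,q},{}_\nu H^{3,q})$ by the same duality and Sobolev arguments as above. For example, $\|\nabla(|u|^2u)\|_{L^q}$ is bounded using $H^{\cdot,q}$ embeddings, and $G$ uses $\nabla_x g(\cdot,0)\in L^\infty$ by \eqref{eq:g0-assumption_intermediate}, so that $G_0\in L^\infty(W^{1,q}(\ell^2))$. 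Since $d\le3$ and we already have high time integrability, these nonlinearities remain (sub)critical. We then apply the $q$-bootstrapping of \cite[Section 6.2]{AgVe25} iteratively: from Step 1 and Sobolev embedding, $u(s_0)$ lies in the trace space $B^{3-4(1+\kappa)/r}_{q,r}$ for $q$ up to some $q_1>2$. Restarting in that scale, identifying the solutions by uniqueness (consistency of $A_{s,q}$), and iterating finitely many times reaches any $q<\infty$. Interpolating with $H^{\theta,r}$ via the mixed-derivative embedding $H^{\theta,r}(L^r_\kappa\text{-type})$ of the maximal regularity space then gives \eqref{eq:parabolic reg of intermediate sol}.

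\textbf{Main obstacle.} The hardest point is Step 2: verifying the Lipschitz estimates in the $L^q$-scale for $F_1=\Delta(|u|^2u)$ and for the gradient part of $G$, and checking the critical-weight bookkeeping so that each restart's initial datum lies in the correct trace space. I would first try to reduce this to $H^{s,q}$ embeddings exactly as in the $L^2$ case, possibly passing through an intermediate Sobolev index to keep the subcriticality inequality strict.
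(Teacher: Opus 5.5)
Your Step~1 has a genuine gap, at the point where you pass from the $L^2$-theory to time integrability $r>2$.

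The trace embedding you invoke is false. For $\kappa=r/2-1-\epsilon$ one has $1-\frac{1+\kappa}{r}=\frac12+\frac{\epsilon}{r}$. Hence $(H^{-1},{}_\nu H^3)_{1-\frac{1+\kappa}{r},r}={}_\nu B^{1+4\epsilon/r}_{2,r}$, which has smoothness strictly above $1$ and does not contain $H^1$. The space $H^1=X_{1/2}$ is the trace space only for the endpoint weight $\kappa=r/2-1$, and that weight is excluded from $\mathcal{SMR}^\bullet_{r,\kappa}$ (Lemma \ref{lemma:SMR} requires $\kappa<r/2-1$). So $u(s_0)\in H^1$ cannot serve as initial datum in $L^r_\kappa(s_0,\cdot\,;{}_\nu H^3)$.

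Choosing $s_0$ by Fubini so that $u(s_0)\in{}_\nu H^3$ fixes the initial datum but not the argument. Uniqueness only identifies the restarted solution with $u$ up to the smaller of the two lifetimes. To show the restarted solution survives up to $\sigma$, you would need its blow-up criterion. That criterion is formulated in a trace norm of smoothness $3-4(1+\kappa)/r>1$, which the available bounds $u\in C([s_0,\sigma);H^1)\cap L^2_{\mathrm{loc}}([s_0,\sigma);{}_\nu H^3)$ do not control.

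The missing idea is the spatial shift used in the paper. Take $\eta\in(0,1/2)$, $\delta=\eta/4$, and the scale $Y_0={}_\nu H^{-1-\eta}$, $Y_1={}_\nu H^{3-\eta}$, so that $Y_\delta=X_0={}_\nu H^{-1}$ and $Y_1=X_{1-\delta}$. With $r_0=4$ and $\alpha_0=1-\eta<r_0/2-1$ the weighted trace space is $(Y_0,Y_1)_{\frac12+\frac\eta4,4}={}_\nu B^{1}_{2,4}\supset H^1$. The Lipschitz estimates survive with $\beta_j$ replaced by $\beta_j+\delta$, since $Y_{\beta+\delta}=X_\beta$. Then:
\begin{itemize}
\item \cite[Proposition~6.8]{AgVe22b} gives $u\in H^{\theta,4}_{\mathrm{loc}}(0,\sigma;X_{1-\delta-\theta})$;
\item \cite[Corollary~6.5]{AgVe22b} raises the time integrability to every $r<\infty$;
\item \cite[Theorem~6.3]{AgVe22b} with weight $\widehat\kappa=r\delta$ removes the loss $\delta$.
\end{itemize}

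Once this is in place, your Step~2 is workable but heavier than needed. Since $u\in L^{3R_0}_{\mathrm{loc}}(0,\sigma;{}_\nu H^3)$ and ${}_\nu H^3\hookrightarrow W^{1,q}\cap L^\infty$ for $d\le 3$, the terms $F(u)\in L^{R_0}(H^{-1,q})$ and $G(u)\in L^{R_0}(\gamma(\ell^2,H^{1,q}))$ are simply given forcings. A weight with $\frac d4(\frac12-\frac1q)<\frac{\kappa}{R_0}<\frac12-\frac1{R_0}$ gives ${}_\nu B^{3-4/R_0}_{2,R_0}\hookrightarrow{}_\nu B^{3-4(1+\kappa)/R_0}_{q,R_0}$ for every $q$ at once. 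The paper therefore solves one linear problem and identifies its solution with $u$ by linear uniqueness. This needs no Lipschitz estimates in the $L^q$-scale, no iteration in $q$, and no comparison of maximal existence times, which your nonlinear restarts would again require.
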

\begin{proof}
    We divide the proof into two steps.

\medskip
\noindent
\emph{Step 1: temporal regularization.}
We prove that
\begin{equation}
\label{eq:intermediate-Hilbert-regularization-subcritical}
u\in
H^{\theta,r}_{\mathrm{loc}}
(0,\sigma; {}_\nu H^{3-4\theta})
\, \text{ a.s.} 
\quad 0 \le \theta <1/2, \quad r\in[2,\infty).
\end{equation}
Evidently, it suffices to prove \eqref{eq:intermediate-Hilbert-regularization-subcritical} for $r \ge 4$. Set 
$ X_0:= {}_\nu H^{-1}$
 and $X_1:= {}_\nu H^{3}.$
Fix $\eta\in(0,1/2)$, put $\delta\coloneq \eta/4$, and consider the shifted
scale
$
Y_0:= {}_\nu H^{-1-\eta},
$ 
and
$Y_1:= {}_\nu H^{3-\eta}.
$
Note $Y_\delta=X_0$ and $
Y_1=X_{1-\delta}$. 

Set $\beta_1 \coloneq \frac{7}{12}+\delta$ and  $\beta_2 \coloneq \frac58+\delta.$
The Lipschitz estimates in the proof of
Proposition~\ref{thm:LWP in intermediate} and the embeddings  $ {}_\nu H^{-1}\hookrightarrow  {}_\nu H^{-1-\eta}$ and
$ {}_\nu H^1\hookrightarrow  {}_\nu H^{1-\eta}$, imply that 
\begin{align}
\|F_j(x)-F_j(y)\|_{Y_0}
&\lesssim
\bigl(1+\|x\|_{Y_{\beta_1}}^2
        +\|y\|_{Y_{\beta_1}}^2\bigr)
\|x-y\|_{Y_{\beta_1}},
&&j\in\{1,3\},
\label{eq:intermediate-shifted-cubic}\\
\|F_2(x)-F_2(y)\|_{Y_0}
&\lesssim
\bigl(1+\|x\|_{Y_{\beta_2}}
        +\|y\|_{Y_{\beta_2}}\bigr)
\|x-y\|_{Y_{\beta_2}},
\label{eq:intermediate-shifted-bilinear}\\
\|G(t,\omega,x)-G(t,\omega,y)\|_{\mathcal L_2(\ell^2,Y_{1/2})}
&\lesssim
\bigl(1+\|x\|_{Y_{\beta_2}}
        +\|y\|_{Y_{\beta_2}}\bigr)
\|x-y\|_{Y_{\beta_2}}.
\label{eq:intermediate-shifted-noise}
\end{align}
We apply  \cite[Proposition~6.8]{AgVe22b}. Take 
$r_0:=4$ and $
\alpha_0:=1-\eta\in(0,1).
$
Note that $A_{-1-\eta,2} \in \mathcal{SMR}^\bullet_{r_0,\alpha_0}$ in the $Y$-scale by Lemma \ref{lemma:SMR}. 
One readily checks that the assumptions of \cite[Proposition~6.8]{AgVe22b} hold in the $(Y_0,Y_1,\alpha_0,r_0)$-setting, and thus that proposition gives
\begin{equation}
\label{eq:intermediate-first-shifted-regularity}
u\in H^{\theta,4}_{\mathrm{loc}}
(0,\sigma;X_{1-\delta-\theta})
\, \text{ a.s.},
\quad 0\leq\theta<1/2.
\end{equation}
We now apply \cite[Corollary~6.5]{AgVe22b} to obtain further integrability in time. Following the notation therein, the initial setting is $(X_0,X_1,2,0)$ and the target setting is $(Y_0,Y_1,4,\alpha_0)$. Condition (1) of that corollary follows from \eqref{eq:intermediate-first-shifted-regularity} and the embedding  
\[
 Y^{\mathrm{Tr}}_4=(Y_0,Y_1)_{3/4,4}
 = {}_\nu B^{2-\eta}_{2,4}\hookrightarrow  {}_\nu H^1
 =X^{\mathrm{Tr}}_2.
\]
Given $ r \in [4,\infty)$ and $ \alpha \in [0,\frac{ r}{2}-1)$ satisfying $\frac{1+\alpha}{r} < \frac{1+\alpha_0}{4}$, set
$$ \widetilde \beta_1 \coloneq 1-\frac 34\frac{1+ \alpha}{ r}>\beta_1, \qquad \widetilde \beta_2 \coloneq 1-\frac 23 \frac{1+ \alpha}{ r}>\beta_2.$$
Then the Lipschitz estimates
\eqref{eq:intermediate-shifted-cubic}--\eqref{eq:intermediate-shifted-noise}  for $F$ and $G$  hold in the $(Y_0,Y_1, r,  \alpha)$-setting with $\beta_j$ replaced by $\widetilde \beta_j$. Hence, \cite[Corollary~6.5]{AgVe22b} gives that 
\begin{equation}
\label{eq:intermediate-shifted-all-r}
u\in H^{\theta, r}_{\mathrm{loc}}
(0,\sigma;X_{1-\delta-\theta})
\, \text{ a.s.},
\quad 0\leq\theta<1/2, \quad r \in [4,\infty).
\end{equation}
Finally, \cite[Theorem~6.3]{AgVe22b} with 
\[
 (Y_0,Y_1,r,\alpha)=(Y_0,Y_1,r,0),
 \qquad
 (\widehat Y_0,\widehat Y_1,\widehat r,\widehat\alpha)
 =(X_0,X_1,r,\widehat\kappa), \quad 
\widehat\kappa:=r\delta \in[0,\tfrac r2-1) 
\]
gives 
\begin{equation*}
u\in
H^{\theta,r}_{\mathrm{loc}}
(0,\sigma;{}X_{1-\theta})
\, \text{ a.s.},
\quad 0\leq\theta<1/2, \quad r \in [4,\infty),
\end{equation*}
which implies  \eqref{eq:intermediate-Hilbert-regularization-subcritical} since $X_{1-\theta}= {}_\nu  H^{3-4\theta}$.

\medskip
\noindent
\emph{Step 2: bootstrapping spatial integrability.}
Fix \(r\in[2,\infty)\), \(q\in(2,\infty)\). Choose an auxiliary exponent
\(R_0\geq r\), sufficiently large such that
\begin{equation*}
 \frac d4\left(\frac12-\frac1q\right)
 <\frac12-\frac1{R_0},
\end{equation*}
and then choose \(\kappa>0\) so that
\begin{equation}
\label{eq:intermediate-choice-alpha-q}
 \frac d4\left(\frac12-\frac1q\right)
 <\frac{\kappa}{R_0}
 <\frac12-\frac1{R_0}.
\end{equation}
 In particular, note that 
\(\kappa\in(0,R_0/2-1)\).  By \eqref{eq:intermediate-Hilbert-regularization-subcritical} and \cite[Proposition 2.1]{AgVe25},
\begin{equation}
\label{eq:intermediate-positive-Hilbert-trace}
 u\in L^{3R_0}_{\mathrm{loc}}(0,\sigma; {}_\nu H^3)
 \cap  C\bigl(
 (0,\sigma); {}_\nu B^{3-4/R_0}_{2,R_0}\bigr)\, \text{ a.s.}
\end{equation}
The 
inequality in \eqref{eq:intermediate-choice-alpha-q} yields the embedding
\begin{equation}
\label{eq:intermediate-Hilbert-to-weighted-q-trace}
  {}_\nu B^{3-4/R_0}_{2,R_0}
 \hookrightarrow
  {}_\nu B^{3-4(1+\kappa)/R_0}_{q,R_0}
 =(X_0^q,X_1^q)_{1-\frac{1+\kappa}{R_0},R_0},
\end{equation}
where $
 X_0^q:= {}_\nu H^{-1,q}$ and $X_1^q:= {}_\nu H^{3,q}.$
Indeed, 
\[
3-\frac4{R_0}-\frac d2
>
3-\frac{4(1+\kappa)}{R_0}-\frac dq
\quad\Longleftrightarrow\quad
\frac{4\kappa}{R_0}
>d\left(\frac12-\frac1q\right).
\]

We now use a stochastic maximal regularity technique to prove \eqref{eq:parabolic reg of intermediate sol}. By the embedding 
$ H^3\hookrightarrow W^{1,q}\cap L^\infty$ and the estimates (for $v \in {}_\nu H^3$)
\begin{align*}
\|F_1(v)\|_{H^{-1,q}} + \|F_3(v)\|_{H^{-1,q}}
&\lesssim \||v|^2v\|_{H^{1,q}}
\,\, \lesssim \|v\|_{L^\infty}^2\|v\|_{W^{1,q}}
\\
\|F_2(v)\|_{H^{-1,q}}
&\lesssim \|\nabla v\times v\|_{L^q}
\lesssim \|v\|_{L^\infty}\|\nabla v\|_{L^q}
\end{align*}
we obtain  
\begin{equation}
\label{eq:intermediate-F-H-1q-bootstrap}
 \|F(v)\|_{X_0^q}\lesssim 1+\|v\|_{H^3}^3.
\end{equation}
Furthermore, by \cite[Theorem 9.4.8]{AnalysisBspaces2}, the chain rule, and 
Assumption~\ref{ass:noise_intermediate},
\begin{align}
\|G(t,\omega,v)\|_{\gamma(\ell^2,X_{1/2}^q)}
&\eqsim
 \|g(t,\omega,\cdot,v)\|_{H^{1,q}(\ell^2)} \lesssim 
 \|g^{(0)}(t,\omega)\|_{H^{1,q}(\ell^2)}
 +\|v\|_{W^{1,q}}\notag\\
&\lesssim_{T} 1+\|v\|_{H^3}.
\label{eq:intermediate-G-H1q-bootstrap}
\end{align}
Here, $\gamma(\ell^2,X^q_{1/2})$ denotes the space of $\gamma$-radonifying operators (cf.\,\cite[Chapter 9]{AnalysisBspaces2}).

Let $0<\alpha<\beta <T<\infty$. Let
\((\sigma_n)_{n\geq1}\) be a localizing sequence for
\((u,\sigma \wedge T)\) such that $\sigma_n <\sigma \wedge T$ (cf.\,\cite[Proposition 5.4]{AgVe25}). Let   $V_n\coloneq \{\sigma_n>\alpha\}$, $u_{\alpha,n}:={\bf1}_{V_n}u(\alpha)$, and on $(\alpha,T)$ set
\begin{align*}
 f_n:={\bf1}_{V_n}{\bf1}_{[\alpha,\sigma_n)}F(u), \quad 
 g_{n}:={\bf1}_{V_n}{\bf1}_{[\alpha,\sigma_n)}G(\cdot,u).
\end{align*}
By \eqref{eq:intermediate-positive-Hilbert-trace} and \eqref{eq:intermediate-Hilbert-to-weighted-q-trace},
$
 u_{\alpha,n}\in
 L^0_{\mathcal F_\alpha}\!\bigl(
 \Omega;(X_0^q,X_1^q)_{1-\frac{1+\kappa}{R_0},R_0}\bigr).
$
Moreover, \eqref{eq:intermediate-positive-Hilbert-trace}, \eqref{eq:intermediate-F-H-1q-bootstrap} and \eqref{eq:intermediate-G-H1q-bootstrap} give that almost surely,
\begin{align*}
 f_n
 \in L^{R_0}\bigl(
 (\alpha,T),w_\alpha^\kappa;X_0^q\bigr), \quad 
 g_n
 \in L^{R_0}\bigl(
 (\alpha,T),w_\alpha^\kappa;
 \gamma(\ell^2,X_{1/2}^q)\bigr),
\end{align*}
where $w_\alpha^\kappa(t)=(t-\alpha)^\kappa$. 
Here we used that the weight $w_\alpha$ is bounded
on \([\alpha,\sigma_n]\). 

Let \(A_{-1,q}\) denote the realization of
\(A\) on $X_0^q$, and let  \(v_n\) solve
\begin{equation*}
\begin{cases}
 \rd v_n+A_{-1,q}v_n\,\rd t
 =f_n\,\rd t+g_n\,\rd W_{\ell^2}(t), \quad t \in (\alpha,T]\\
 v_n(\alpha)=u_{\alpha,n}.
\end{cases}
\end{equation*}
By Lemma \ref{lemma:SMR}, $A_{-1,q} \in \mathcal{SMR}^\bullet_{R_0,\kappa}$ and thus by \cite[Proposition 3.11]{AgVe25},
\begin{equation}
\label{eq:intermediate-weighted-q-SMR-output}
 v_n\in H^{\theta,R_0}\bigl(
 \alpha,T,w_\alpha^\kappa;X_{1-\theta}^q\bigr)
 \, \text{ a.s.} \quad 0 \le \theta<1/2.
\end{equation}
Note that $X_0^q \hookrightarrow Y_0$, $X_1^q \hookrightarrow Y_1$ and the family of operators $(A_{s,q})$ is consistent. Hence, uniqueness in the $(Y_0,Y_1)$-scale gives
\begin{equation}
\label{eq:intermediate-weighted-q-compatibility}
v_n={\bf1}_{V_n}u
 \quad\text{on }[\alpha,\sigma_n] \quad\text{a.s. on $V_n$.}
\end{equation}
Since $\beta>\alpha$, the weight $w_\alpha$ is bounded above and below
on $[\beta,T]$.  Thus
\eqref{eq:intermediate-weighted-q-SMR-output}, \eqref{eq:intermediate-weighted-q-compatibility} and the identity $X^q _{1-\theta}= {}_\nu H^{3-4\theta,q}$ yield
\[
 u\in
 H^{\theta,R_0}\bigl(
 \beta,\sigma_n; {}_\nu H^{3-4\theta,q}\bigr)
 \, \text{ on }V_n, \quad 0\le \theta<1/2.
\]
Letting \(n\to\infty\) and noting that $\beta>0$ was arbitrary gives \eqref{eq:parabolic reg of intermediate sol}.

\end{proof}

	In order to prove Proposition \ref{prop:energy estimate in H1}, we first obtain an It\^o formula for the (shifted) Ginzburg--Landau energy given by 
	\begin{equation}
		\label{eq:corrected-GL-energy intro}
		\mathscr E(v) \coloneq K+\frac\alpha2\|\nabla v\|_{L^2}^2
		+\frac{\kappa_2}{4}\|v\|_{L^4}^4
		-\frac{\kappa_1}{2}\|v\|_{L^2}^2,		\qquad v\in H^1,
	\end{equation}
	where \(K>0\) is chosen large enough that 
	\begin{equation}
		\label{eq:corrected-energy-coercive intro}
		\mathscr E(v)\ge1
		\quad \text{and} \quad
		\|v\|_{H^1}^2+\|v\|_{L^4}^4\le C\mathscr E(v).
	\end{equation}
	The lower bound above follows by noting that the polynomial $f(s) = \frac{\kappa_2}{4}s^4 - \frac{\kappa_1}{2}s^2$ has a minimum $f(s) \ge -\frac{\kappa_1^2}{4\kappa_2}$ and we are considering a bounded domain. 
	
	 While the classical It\^o formula applies for the terms $\|\nabla u(t)\|_{L^2}^2$ and $\|u(t)\|_{L^2}^2$, in order to obtain a formula for the term $\|u(t)\|_{L^4}^4$, we apply \cite[Lemma A.6]{AgVe25} (see also \cite[Lemma 4.3]{Cahn_Hill_crit_spaces}).
	
	Recall $\Heff(u) = \alpha\Delta u + \kappa_1u - \kappa_2|u|^2 u$.
	
	\begin{lemma}[It\^o formula]
		\label{lem:Ito-formula-GL-energy}
		Let $d \in \{1,2,3\}$ and suppose that Assumption~\ref{ass:noise_intermediate} holds. Let $(u,\sigma)$ be the maximal solution of equation \eqref{eq: LLB1} with initial data $u_0\in L_{\mathcal F_0}^{0}(\Omega;H^1)$ given by Proposition \ref{thm:LWP in intermediate}, and let $\tau_{R,n}$ be defined by
		\eqref{eq:stop time tRn}. Then, for all $T>0$, and $t \in [0,T]$,
		\begin{align}
			\label{eq:corrected-Ito-energy}
			\mathscr E(u(t\wedge\tau_{R,n}))
			&+\lambda_r\int_0^{t\wedge\tau_{R,n}}
			\|\Heff(u(s))\|_{L^2}^2\,\rd s
			+\lambda_e\int_0^{t\wedge\tau_{R,n}}
			\|\nabla\Heff(u(s))\|_{L^2}^2\,\rd s
			\notag\\
			&=
			\mathscr E(u_0)
			+\frac12\int_0^{t\wedge\tau_{R,n}}
			\sum_{k\ge1}
			D^2\mathscr E(u(s))
			[G_k(s,u(s)),G_k(s,u(s))]\,\rd s + M_{R,n}(t),
		\end{align}
		almost surely, where
		\begin{equation}
			\label{eq:corrected-martingale}
			M_{R,n}(t)
			\coloneq 
			-\sum_{k\ge1}\int_0^{t\wedge\tau_{R,n}}
			\langle\Heff(u(s)),G_k(s,u(s))\rangle_{L^2}\,
			\rd W_s^k.
		\end{equation}
	\end{lemma}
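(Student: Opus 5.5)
We split $\mathscr E(v)=K+\frac\alpha2\|\nabla v\|_{L^2}^2-\frac{\kappa_1}2\|v\|_{L^2}^2+\frac{\kappa_2}4\|v\|_{L^4}^4$ and derive an It\^o formula for each piece separately on $[0,\tau_{R,n}]$. We then sum the three formulas and identify the drift.

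For the quadratic part we use the classical It\^o formula in a Gelfand triple. On $[0,\tau_{R,n}]$ we have $u\in C([0,\tau_{R,n}];H^1)\cap L^2(0,\tau_{R,n};{}_\nu H^3)$. For $\|u\|_{L^2}^2$ we use the triple ${}_\nu H^2\hookrightarrow L^2\hookrightarrow({}_\nu H^2)^*$, exactly as in Lemma~\ref{lem:L2-estimate}. For $\|\nabla u\|_{L^2}^2$ we use the triple ${}_\nu H^3\hookrightarrow H^1\hookrightarrow({}_\nu H^3)^*$, which works because the drift $-Au+F(u)$ lies in $L^2(0,\tau_{R,n};({}_\nu H^3)^*)$ by the Lipschitz estimates of Proposition~\ref{thm:LWP in intermediate}. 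Combining the two gives an identity for $\frac\alpha2\|\nabla u\|^2-\frac{\kappa_1}2\|u\|^2$. Its drift is $\langle -\alpha\Delta u-\kappa_1u,\,b(u)\rangle$, with $b(u)\coloneq\lambda_r\Heff(u)-\lambda_e\Delta\Heff(u)-\gamma u\times\Heff(u)$. The It\^o correction is $\frac12\sum_k(\alpha\|\nabla G_k\|^2-\kappa_1\|G_k\|^2)$, and the martingale is $\sum_k\int\langle-\alpha\Delta u-\kappa_1 u,G_k\rangle\,\rd W^k$.

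The quartic term $\frac{\kappa_2}4\|u\|_{L^4}^4$ is the main obstacle, because $v\mapsto\|v\|_{L^4}^4$ is not covered by the Hilbert-space It\^o formula. Here we invoke \cite[Lemma A.6]{AgVe25}, an It\^o formula for $\|u\|_{L^q}^q$ for solutions of ${\rm d}u=f\,{\rm d}t+g\,{\rm d}W$. It requires enough integrability that $f\cdot|u|^2u$ and $|u|^2|g|^2$ are integrable. Near $t=0$ we only know $u\in H^1$, so we apply the lemma on $[\varepsilon,\tau_{R,n}]$. On that interval Lemma~\ref{lemma:parab reg variational} gives $u\in H^{\theta,r}(\varepsilon,\tau_{R,n};{}_\nu H^{3-4\theta,q})$ for all $r,q$, so $f=-Au+F(u)\in L^r(H^{-1,q})$ is paired against $|u|^2u\in L^{r'}(H^{1,q'})$.

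For the endpoint we let $\varepsilon\downarrow0$, using $u\in C([0,\tau_{R,n}];H^1)\hookrightarrow C(L^4)$ (valid since $d\le3$). The drift integrals and quadratic variation then converge by the uniform bounds available on $[0,\tau_{R,n}]$. Specifically, $\langle |u|^2u,\Delta\Heff(u)\rangle=-\langle\nabla(|u|^2u),\nabla\Heff(u)\rangle$ is integrable on $(0,\tau_{R,n})$ because $\nabla\Heff(u)\in L^2_tL^2_x$ and $\nabla(|u|^2u)\in L^\infty_tL^{3/2}$, with $\Heff$ controlled in $L^2(H^1)$ via $u\in L^2(H^3)$ and $\|u\|_{H^1}\le R$. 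The resulting identity has drift $\kappa_2\langle|u|^2u,b(u)\rangle$, It\^o correction $\frac{\kappa_2}2\sum_k\int(|u|^2|G_k|^2+2(u\cdot G_k)^2)$, and martingale $\kappa_2\sum_k\int\langle|u|^2u,G_k\rangle\,\rd W^k$.

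Summing the pieces, the drift becomes $\langle -\Heff(u),b(u)\rangle$, which integrates by parts using $\partial_n\Heff(u)=0$ (from $u\in{}_\nu H^3$ and the boundary conditions). The term $\langle\Heff,u\times\Heff\rangle$ vanishes pointwise, so the drift equals $-\lambda_r\|\Heff\|_{L^2}^2-\lambda_e\|\nabla\Heff\|_{L^2}^2$. The martingale parts combine to $M_{R,n}$, and the correction terms add up to $\frac12\sum_kD^2\mathscr E(u)[G_k,G_k]$. Convergence of the series in $k$ follows from Lemma~\ref{lem:noise-trace}.
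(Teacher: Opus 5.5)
Your proposal follows the paper's proof essentially step for step. The paper also splits $\mathscr E$, uses the variational It\^o formulas in ${}_\nu H^2\hookrightarrow L^2\hookrightarrow({}_\nu H^2)^*$ and ${}_\nu H^3\hookrightarrow H^1\hookrightarrow{}_\nu H^{-1}$ (subtracting to isolate $\|\nabla u\|_{L^2}^2$), applies the $\mathbb R^3$-valued version of \cite[Lemma~A.6]{AgVe25} on $[\varepsilon,t\wedge\tau_{R,n}]$ via Lemma~\ref{lemma:parab reg variational}, lets $\varepsilon\downarrow0$, and recombines the pieces. Two local justifications in your write-up need repair.

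\emph{The limit $\varepsilon\downarrow0$.} Your integrability claim for the $\lambda_e$-term does not close. Pairing $\nabla\Heff(u)\in L^2_x$ with $\nabla(|u|^2u)\in L^{3/2}_x$ does not give an $L^1_x$ bound, since $\tfrac12+\tfrac23>1$. Moreover, in $d=3$ the bound $\|u\|_{H^1}\le R$ alone only yields $|u|^2|\nabla u|\in L^{6/5}$. What you need, and what the paper uses, is $|u|^2u\in L^2(0,\tau_{R,n};H^1)$. This follows from $\|u\|_{L^\infty}^2\lesssim\|u\|_{H^2}^2\lesssim\|u\|_{H^1}\|u\|_{H^3}$, which gives $\||u|^2u\|_{H^1}\lesssim\|u\|_{L^\infty}^2\|u\|_{H^1}\lesssim_R\|u\|_{H^3}$. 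This is exactly the estimate that places $\Heff(u)$ in $L^2(0,\tau_{R,n};H^1)$. With both $\nabla(|u|^2u)$ and $\nabla\Heff(u)$ in $L^2_tL^2_x$, dominated convergence applies.

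\emph{The boundary condition for $\Heff(u)$.} The condition $\partial_n\Heff(u)=0$ does not follow from $u\in{}_\nu H^3$. Membership in ${}_\nu H^3$ only encodes $\partial_n u=0$, and $\partial_n\Delta u$ has no trace for $u\in H^3$. The identity $\langle\Delta\Heff(u),\Heff(u)\rangle_{{}_\nu H^{-1},H^1}=-\|\nabla\Heff(u)\|_{L^2}^2$ holds for a different reason. In the ${}_\nu H^{-1}=(H^1)^*$ formulation, $\Delta$ acting on $H^1$ is the weak Neumann Laplacian, $\langle\Delta w,\phi\rangle=-\int_{\mathscr D}\nabla w:\nabla\phi\,\rd x$. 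This is how $A$ and $F_1$ are realized on ${}_\nu H^3$, by density of ${}_\nu H^4$. The same weak interpretation is what makes the divergence-form drift $\operatorname{div}\Phi$ in the $L^4$ It\^o formula consistent with the equation.
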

	
	\begin{proof}
    It is more convenient to work with equation \eqref{eq: LLB} instead of \eqref{eq: LLB1}. 
	We denote the drift term of \eqref{eq: LLB} by
		\[
		b(u)
		\coloneq
		\lambda_r\Heff(u)-\lambda_e\Delta\Heff(u)
		-\gamma u\times\Heff(u).
		\]
		We derive the It\^o formula separately for the three terms
		appearing in $\mathscr E$ and then combine all formulae. Throughout the proof, we apply 
		$$
		u \in L^2([0,\tau_{R,n});   {}_\nu  H^3)\cap  C([0,\tau_{R,n}); H^1) \, \text{ a.s.}
		$$
		without giving further reference.

		\smallskip
		\noindent
		\emph{Step 1: It\^o formula for the $L^2$-term.}
		We apply the classical variational It\^o formula
		in the Gelfand triple
		$
		{}_\nu H^2\hookrightarrow L^2
		\hookrightarrow{}_\nu H^{-2}
	$, which gives 
		\begin{align}
		\frac12\|u(t\wedge\tau_{R,n})\|_{L^2}^2
		&=
		\frac12\|u_0\|_{L^2}^2
		+\int_0^{t\wedge\tau_{R,n}}
		\langle b(u),u\rangle_{{}_\nu H^{-1},H^1}\,\rd s
		+\frac12\int_0^{t\wedge\tau_{R,n}}
		\sum_{k\ge1}\|G_k(s,u)\|_{L^2}^2\,\rd s
		\notag\\
		&\quad
		+\sum_{k\ge1}\int_0^{t\wedge\tau_{R,n}}
		\langle u,G_k(s,u)\rangle_{L^2}\,\rd W_s^k,
		\label{eq:Ito-L2-energy-component}
	\end{align}
    almost surely, for all $t \in [0,T]$.
	Indeed, by Assumption  \ref{ass:noise_intermediate}, 
	$
	G(\cdot,u)\in
	L^2\bigl(
	0,\tau_{R,n};
	\mathcal L_2(\ell^2,L^2)
	\bigr)
	$ a.s. 
	It remains to verify that  
	$b(u) \in L^2(0,T \wedge \tau_{R,n}; {}_\nu H^{-2}). $
		On $[0,\tau_{R,n}]$, the definition of $\Heff(u)$  and the
		embedding $H^1\hookrightarrow L^6$ give
		\[
		\|\Heff(u)\|_{L^2}
		\lesssim
		\|u\|_{H^2}+\|u\|_{L^6}^3+\|u\|_{L^2}
		\lesssim_R
		1+\|u\|_{H^3}
		\]
	    and thus $\Heff (u) \in L^2(0,T \wedge \tau_{R,n};L^2)$. 
		Moreover, for every $v \in\mathcal {}_\nu H^2$, the embedding ${}_\nu H^2 \hookrightarrow L^\infty$ gives 
		\begin{align*}
			\left|
			\langle u\times\Heff(u),v\rangle_{L^2}
			\right|
			&\le
			\|\Heff(u)\|_{L^2}\|u\,v\|_{L^2}
			\le
			\|\Heff(u)\|_{L^2}\|u\|_{L^2}\|v\|_{L^\infty}
			\lesssim_R
			\|\Heff(u)\|_{L^2}\|v\|_{H^2}
		\end{align*}
and thus $ u\times \Heff u \in L^2(0,T\wedge\tau_{R,n}; {}_\nu H^{-2}) $. These imply that $b(u) \in L^2(0,T \wedge\tau_{R,n}; {}_\nu H^{-2})$.

		\smallskip
		\noindent
		\emph{Step 2: It\^o formula for the $H^1$-term.}
		We next apply the variational It\^o formula in the Gelfand triple
		$
		{}_\nu H^3\hookrightarrow H^1
		\hookrightarrow {}_\nu H^{-1}$. 
We first verify its assumptions. Interpolation and ${}_\nu H^2 \hookrightarrow L^\infty$ give
\[
\|u\|_{L^\infty}^2
\lesssim
\|u\|_{H^2}^2
\lesssim
\|u\|_{H^1}\|u\|_{H^3}.
\]
It follows that, on $[0,\tau_{R,n}]$,
\begin{align*}
	\||u|^2u\|_{H^1}
	&\lesssim
	\|u\|_{L^\infty}^2\|u\|_{H^1}
	\lesssim_R
	\|u\|_{H^3}
\end{align*}
and thus 
$\Heff(u) \in L^2(0,T \wedge\tau_{R,n};H^1).
$
Moreover, using $H^1\hookrightarrow L^6 \hookrightarrow L^3$,
$$
\|u\times\Heff(u)\|_{L^2}
\lesssim
\|u\|_{L^6}\|\Heff(u)\|_{L^3}
\lesssim
\|u\|_{H^1}\|\Heff(u)\|_{H^1}.
$$
So,
$
b(u) \in L^2(0,T \wedge\tau_{R,n};{}_\nu H^{-1})
$. 
Assumption~\ref{ass:noise_intermediate} gives 
$
G(\cdot,u)\in
L^2\bigl(
0,T \wedge\tau_{R,n};
\mathcal L_2(\ell^2,H^1)
\bigr)$ a.s. 
Hence, the assumptions of It\^o's formula are satisfied. Since the duality in this Gelfand triple is induced by the $H^1$ inner product, we get, almost surely, for all $t \in [0,T]$,
		\begin{align*}
			\frac12\|u(t\wedge\tau_{R,n})\|_{H^1}^2
			&=
			\frac12\|u_0\|_{H^1}^2
			+\int_0^{t\wedge\tau_{R,n}}
			\langle b(u),u-\Delta u\rangle_{{}_\nu H^{-1}, H^1}\,\rd s
			+\frac12\int_0^{t\wedge\tau_{R,n}}
			\sum_{k\ge1}\|G_k(s,u)\|_{H^1}^2\,\rd s
			\\
			&\quad
			+\sum_{k\ge1}\int_0^{t\wedge\tau_{R,n}}
			(u,G_k(s,u))_{H^1}\,\rd W_s^k.
		\end{align*}
		Subtracting \eqref{eq:Ito-L2-energy-component} and using
		$
		\|v\|_{H^1}^2-\|v\|_{L^2}^2
		=\|\nabla v\|_{L^2}^2
		$
		gives, almost surely,
		\begin{align}
			\frac12\|\nabla u(t\wedge\tau_{R,n})\|_{L^2}^2
			&=
			\frac12\|\nabla u_0\|_{L^2}^2
			+\int_0^{t\wedge\tau_{R,n}}
		\langle b(u),-\Delta u\rangle_{{}_\nu H^{-1},H^1}\,\rd s
			\notag\\
			&\quad 	+\frac12\int_0^{t\wedge\tau_{R,n}}
			\sum_{k\ge1}\|\nabla G_k(s,u)\|_{L^2}^2\,\rd s
			+\sum_{k\ge1}\int_0^{t\wedge\tau_{R,n}}
			\langle\nabla u,\nabla G_k(s,u)\rangle_{L^2}\,
			\rd W_s^k.
			\label{eq:Ito-gradient-energy-component}
		\end{align}
		
		\smallskip
		\noindent
		\emph{Step 3: It\^o formula for the \(L^4\)-term.}
		We prove that, almost surely, for all $t \in [0,T]$,  \label{eq:Ito-L4-energy}
			\begin{align}
				\frac14\|u(t\wedge\tau_{R,n})\|_{L^4}^4 \nonumber
				&=
				\frac14\|u_0\|_{L^4}^4
				+\lambda_r\int_0^{t\wedge\tau_{R,n}}
				\int_{\mathscr D}|u|^2u\cdot \Heff(u)\,\rd x\,\rd s 
				\\
				&\quad +\lambda_e\sum_{j=1}^d
				\int_0^{t\wedge\tau_{R,n}}\int_{\mathscr D}
				\partial_j (|u|^2u) \cdot\partial_j \Heff(u)\,\rd x\,\rd s \nonumber
				\\ 
				&\quad
				+\frac12\int_0^{t\wedge\tau_{R,n}}
				\sum_{k\ge1}\int_{\mathscr D}
				\left[
				|u|^2|G_k(s,u)|^2
				+2(u\cdot G_k(s,u))^2
				\right]\,\rd x\,\rd s
				\\
				&\quad
				+\sum_{k\ge1}\int_0^{t\wedge\tau_{R,n}}
				\int_{\mathscr D}|u|^2u\cdot G_k(s,u)\,\rd x\, \nonumber
				\rd W_s^k.	
			\end{align}
		
		We apply the  $\mathbb R^3$-valued counterpart of
		\cite[Lemma~A.6]{AgVe25} (see also \cite[Lemma 4.3]{Cahn_Hill_crit_spaces}). We write \eqref{eq: LLB} as
		\[
		\rd u
		=
		\bigl(\varphi+\operatorname{div}\Phi\bigr)\,\rd t
		+\sum_{k\ge1}\psi_k\,\rd W^k_t,
		\]
		where
	\begin{equation}\label{eq:def of phi, Phi and psi}
		\varphi
		\coloneq
		\lambda_r\Heff(u)-\gamma u\times\Heff(u),
		\qquad
		\Phi_j\coloneq-\lambda_e\partial_j\Heff(u),
		\qquad
		\psi_k\coloneq G_k(\cdot,u).
	\end{equation}

		Let \(0<\delta<T\). By \eqref{eq:parabolic reg of intermediate sol} with
		\((r,q)=(6,4)\) and the trace embedding of \cite[Proposition 2.1]{AgVe25}, almost surely,
		$$
		u\in
		L^6(\delta,T \wedge\tau_{R,n};H^{3,4})
	 \cap C([\delta,T \wedge\tau_{R,n}]; {}_\nu H^{2,4}).$$
		The definition of $\Heff(u)$ and ${}_\nu H^{2,4}\hookrightarrow W^{1,\infty}$ give  
		$
		\Heff(u)\in L^6(\delta,T \wedge\tau_{R,n};H^{1,4}).
		$ 
		It follows that
		\[
		\varphi\in L^1(\delta,T \wedge\tau_{R,n};L^4),
		\qquad
		\Phi\in L^2(\delta,T \wedge\tau_{R,n};L^4),
		\]
		while Assumption~\ref{ass:noise_intermediate} gives
		$
		\psi\in
		L^2(0,T \wedge\tau_{R,n};L^4(\ell^2)).
		$
		Therefore, the integrability assumptions of the \(\mathbb R^3\)-valued counterpart of
		\cite[Lemma~A.6]{AgVe25} are satisfied, from which we get, with $q=4$,
		\begin{equation}			\label{eq:vector-valued-Lq-Ito}
		\begin{aligned}
			\frac 1q \|u(t\wedge \tau_{R,n})\|_{L^q}^q
			&=
		 \frac 1q	\|u(\delta)\|_{L^q}^q
			+\int_\delta ^{t\wedge\tau_{R,n}}\int_{\mathscr D}
			|u|^{q-2}u\cdot\varphi\,\rd x\,\rd s
			\\
			& \quad 
			-\sum_{j=1}^d \int_\delta^{t\wedge\tau_{R,n}} \int_{\mathscr D}
			\partial_j(|u|^{q-2} u) \cdot\Phi_j\,\rd x\,\rd s
\\
			&\quad
			+\sum_{k\ge1}\int_\delta ^{t\wedge\tau_{R,n}}\int_{\mathscr D}
			|u|^{q-2}u\cdot\psi_k\,\rd x\,\rd W_s^k
			\\
			& \quad  
			+\frac 12\int_\delta ^{t\wedge\tau_{R,n}}\sum_{k\ge1}\int_{\mathscr D}
			|u|^{q-2}|\psi_k|^2\,\rd x\,\rd s
\\
			&\quad
			+\frac{(q-2)}2
			\int_\delta ^{t\wedge\tau_{R,n}}\sum_{k\ge1}\int_{\mathscr D}
			|u|^{q-4}(u\cdot\psi_k)^2\,\rd x\,\rd s,
		\end{aligned}
	\end{equation}
	almost surely on $\{\tau_{R,n} > \delta \}$, for all $t \in [\delta,T]$.
		We note
		\begin{equation}
			\label{eq:L4-phi-term}
			\int_\delta ^{t\wedge\tau_{R,n}}\int_{\mathscr D}
		|u|^2u\cdot\varphi\,\rd x\,\rd s=	\lambda_r\int_\delta ^{t\wedge\tau_{R,n}}
			\int_{\mathscr D}|u|^2u\cdot \Heff(u)\,\rd x\,\rd s,
		\end{equation}
		since
		$u\cdot(u\times \Heff(u))=0$ pointwise, and thus 
		\[
		|u|^2u\cdot\varphi
		=
		|u|^2u\cdot(\lambda_r\Heff(u)-\gamma u\times \Heff(u))
		=
		\lambda_r|u|^2u\cdot \Heff(u).
		\]
Therefore, \eqref{eq:vector-valued-Lq-Ito}, \eqref{eq:L4-phi-term} and the definitions of  $\varphi, \Phi$ and $\psi$ in \eqref{eq:def of phi, Phi and psi} give 
		\begin{equation}		\label{eq:Ito-L4-energy-positive-time}
			\begin{aligned}
			\frac14\|u(t\wedge\tau_{R,n})\|_{L^4}^4
			&=
			\frac14\|u(\delta)\|_{L^4}^4
			+\lambda_r\int_\delta^{t\wedge\tau_{R,n}}
			\int_{\mathscr D}|u|^2u\cdot \Heff(u)\,\rd x\,\rd s
			\\
			&\quad +\lambda_e\sum_{j=1}^d
						\int_\delta^{t\wedge\tau_{R,n}}\int_{\mathscr D}
						\partial_j (|u|^2u) \cdot\partial_j \Heff(u)\,\rd x\,\rd s
			\\
			&\quad
			+\frac12\int_\delta^{t\wedge\tau_{R,n}}
			\sum_{k\ge1}\int_{\mathscr D}
			\left[
			|u|^2|G_k(s,u)|^2
			+2(u\cdot G_k(s,u))^2
			\right]\,\rd x\,\rd s
			\notag\\
			&\quad
			+\sum_{k\ge1}\int_\delta^{t\wedge\tau_{R,n}}
			\int_{\mathscr D}|u|^2u\cdot G_k(s,u)\,\rd x\,
			\rd W_s^k.	
		\end{aligned}
	\end{equation}
		It remains to let \(\delta\downarrow0\) in \eqref{eq:Ito-L4-energy-positive-time} and obtain \eqref{eq:Ito-L4-energy}. Since
		$
		u\in C([0,\sigma);H^1)$ and $H^1 \hookrightarrow L^4,$
		we have
		$
		\|u(\delta)\|_{L^4}^4\longrightarrow\|u_0\|_{L^4}^4.
		$
		Moreover, by the calculations of Step~2, both 
		$	
		\Heff(u)$ and $|u|^2u$ belong to $L^2(0,T \wedge\tau_{R,n};H^1)$, and thus we can apply the dominated convergence theorem for the two terms containing $\Heff(u)$. 
		Similarly, Hölder's inequality gives
		\[
			\sum_{k\ge1}\int_{\mathscr D}
			\left[
			|u|^2|G_k|^2+2(u\cdot G_k)^2
			\right]\,\rd x
			\lesssim
			\|u\|_{L^6}^2
			\|G(\cdot,u)\|_{L^3(\ell^2)}^2
			\lesssim_R
			\|G(\cdot,u)\|_{H^1(\ell^2)}^2,
		\]
        which is integrable.
		Moreover, 
		\[
			\sum_{k\ge1}
			\Bigl|
			\int_{\mathscr D}|u|^2u\cdot G_k(s,u)\,\rd x
			\Bigr|^2
			\le
			\||u|^2u\|_{L^2}^2
			\|G(s,u)\|_{L^2(\ell^2)}^2
			\lesssim_R
			\|G(s,u)\|_{L^2(\ell^2)}^2,
		\]
		and thus the stochastic term is a continuous local martingale.

		\medskip
		\noindent
		\emph{Step 4: combination of all the formulae.}
		We now combine \eqref{eq:Ito-L2-energy-component}--\eqref{eq:Ito-L4-energy}. Recalling the definition of 
		$
		\mathscr E(u)
		$ in \eqref{eq:corrected-GL-energy intro},
		the terms on the left-hand side combine into
		\[
		\mathscr E(u(t\wedge\tau_{R,n})),
		\]
		and the initial-value terms combine into \(\mathscr E(u_0)\).
		
		We next combine the drift terms. Recalling
		$
		b(u)=\lambda_r \Heff(u)-\lambda_e\Delta \Heff(u)-\gamma u\times \Heff(u),
		$
		and noting that $(u\times \Heff(u))\cdot u=0$, we get 
			\begin{align*}
			\big\langle b(u),|u|^2u\big\rangle_{{}_\nu H^{-1},H^1}
			&=
			\lambda_r\int_{\mathscr D}|u|^2u\cdot \Heff(u)\,\rd x
			+\lambda_e\sum_{j=1}^d
			\int_{\mathscr D}
			\partial_j(|u|^2u)\cdot\partial_j \Heff(u)\,\rd x,
		\end{align*}
		which is precisely the drift contribution from the \(L^4\)-identity in \eqref{eq:Ito-L4-energy}. 
		It follows that the sum of the three drift contributions is
		\begin{align*}
			&-\kappa_1\langle b(u),u\rangle_{{}_\nu H^{-1},H^1}
			+\alpha\langle b(u),-\Delta u\rangle_{{}_\nu H^{-1},H^1}
			+\kappa_2\langle b(u),|u|^2u\rangle_{{}_\nu H^{-1},H^1}\\
			&\qquad
			=
			\left\langle
			b(u),-\Heff(u)
			\right\rangle_{{}_\nu H^{-1},H^1}.
		\end{align*}
		Using the definition of \(b(u)\), we calculate
		\begin{align*}
			\langle b(u),-\Heff(u)\rangle_{{}_\nu H^{-1},H^1}
			&=
			-\lambda_r\|\Heff(u)\|_{L^2}^2
			+\lambda_e\langle\Delta \Heff(u),\Heff(u) \rangle_{{}_\nu H^{-1},H^1}
			\\
			&=
			-\lambda_r\|\Heff(u)\|_{L^2}^2
			-\lambda_e\|\nabla \Heff(u)\|_{L^2}^2.
		\end{align*}

		We next combine the It\^o-correction terms. By \eqref{eq:corrected-second-derivative}, 
        they give
		\begin{align*}
			&-\frac{\kappa_1}{2}
			\sum_{k\ge1}\|G_k(s,u)\|_{L^2}^2
			+\frac{\alpha}{2}
			\sum_{k\ge1}\|\nabla G_k(s,u)\|_{L^2}^2
			+\frac{\kappa_2}{2}
			\sum_{k\ge1}\int_{\mathscr D}
			\left[
			|u|^2|G_k(s,u)|^2
			+2(u\cdot G_k(s,u))^2
			\right]\,\rd x\\
			&\qquad
			=
			\frac12\sum_{k\ge1}
			D^2\mathscr E(u)
			[G_k(s,u),G_k(s,u)].
		\end{align*}
		
		Finally, the three stochastic-integral terms combine into
		\begin{align*}
		M_{R,n}(t) &=\sum_{k\ge1}\int_0^{t\wedge\tau_{R,n}}
			\Bigg[
			-\kappa_1\langle u,G_k(s,u)\rangle_{L^2}
			+\alpha\langle\nabla u,\nabla G_k(s,u)\rangle_{L^2}
			+\kappa_2\int_{\mathscr D}
			|u|^2u\cdot G_k(s,u)\,\rd x
			\Bigg]\,\rd W_s^k
			\\
			&=	
			-\sum_{k\ge1}\int_0^{t\wedge\tau_{R,n}}
			\langle\Heff(u(s)),G_k(s,u(s))\rangle_{L^2}\,
			\rd W_s^k.
		\end{align*}
	\end{proof}

We now present the proof of Proposition \ref{prop:energy estimate in H1}. 

\begin{proof}[Proof of Proposition \ref{prop:energy estimate in H1}]
Let \((\sigma_n)_{n\ge1}\) be a localizing sequence for \((u,\sigma)\). For \(R>0\) and \(n\ge1\), let $\tau_{R,n}$ be as defined in \eqref{eq:stop time tRn}. Note that 
$\tau_{R,n}\uparrow T\wedge\sigma_n $ as  $R\to\infty .$
Let $\mathscr E(v)$ be as in \eqref{eq:corrected-GL-energy intro}.
Note that the assumption \(u_0\in L^{4p}_{\mathcal F_0}(\Omega;H^1)\) implies \(\bbE\mathscr E(u_0)^p<\infty\). 
By Lemma \ref{lem:Ito-formula-GL-energy} we get
\begin{align}
\label{eq:corrected-Ito-energy after}
\mathscr E(u(t\wedge\tau_{R,n}))
&+\lambda_r\int_0^{t\wedge\tau_{R,n}}\|\Heff(u(s))\|_{L^2}^2\,\rd s
+\lambda_e\int_0^{t\wedge\tau_{R,n}}\|\nabla\Heff(u(s))\|_{L^2}^2\,\rd s\notag\\
&=\mathscr E(u_0)
+\frac12\int_0^{t\wedge\tau_{R,n}}
\sum_{k\ge1}D^2\mathscr E(u(s))[G_k(s,u(s)),G_k(s,u(s))]  \,\rd s
+M_{R,n}(t),
\end{align}
where
\begin{equation}
\label{eq:corrected-second-derivative}
D^2\mathscr E(v)[\phi,\phi]
=\alpha\|\nabla\phi\|_{L^2}^2
+\kappa_2\int_{\mathscr D}\bigl(|v|^2|\phi|^2+2(v\cdot\phi)^2\bigr)\,\rd x
-\kappa_1\|\phi\|_{L^2}^2.
\end{equation}
and
\begin{equation*}
M_{R,n}(t)=-\sum_{k\ge1}\int_0^{t\wedge\tau_{R,n}}
\langle \Heff(u(s)),G_k(s,u(s))\rangle_{L^2}\,\rd W_s^k .
\end{equation*}
By \eqref{eq:corrected-second-derivative}, \eqref{eq:noise-trace} and \eqref{eq:corrected-energy-coercive intro},
\begin{equation}
\label{eq:corrected-Ito-correction-bound}
\frac12\sum_{k\ge1}D^2\mathscr E(u)[G_k(u),G_k(u)]
\le C_T\bigl(1+\|u\|_{H^1}^2+\|u\|_{L^4}^4\bigr)
\le C_T\mathscr E(u).
\end{equation}
Setting
\begin{equation}
\label{eq:corrected-D-R-n}
D_{R,n}(t)=\int_0^{t\wedge\tau_{R,n}}
\left(\lambda_r\|\Heff(u(s))\|_{L^2}^2
+\lambda_e\|\nabla\Heff(u(s))\|_{L^2}^2\right)\,\rd s,
\end{equation}
taking the supremum in \eqref{eq:corrected-Ito-energy after}, using \eqref{eq:corrected-Ito-correction-bound}, and the fact that \(D_{R,n}\) is increasing gives, for \(t\in[0,T]\),
\begin{equation}
\label{eq:corrected-pathwise-energy}
\sup_{r\le t}\mathscr E(u(r\wedge\tau_{R,n}))+D_{R,n}(t)
\le C\Big(\mathscr E(u_0)+\int_0^t\sup_{\rho\le s}\mathscr E(u(\rho\wedge\tau_{R,n}))\,\rd s
+\sup_{r\le t}|M_{R,n}(r)|\Big).
\end{equation}
Raising \eqref{eq:corrected-pathwise-energy} to the power \(p\), taking expectations, and using Jensen's inequality yields
\begin{align}
\label{eq:corrected-energy-before-BDG}
&\bbE\sup_{r\le t}\mathscr E(u(r\wedge\tau_{R,n}))^p+\bbE D_{R,n}(t)^p\notag\\
&\quad\le C_p\bbE\mathscr E(u_0)^p
+C_{p,T}\int_0^t\bbE\sup_{\rho\le s}\mathscr E(u(\rho\wedge\tau_{R,n}))^p\,\rd s
+C_p\bbE\sup_{r\le t}|M_{R,n}(r)|^p .
\end{align}
We estimate the martingale term by the Burkholder--Davis--Gundy inequality, \eqref{eq:H-G-mart}, and Young's inequality as follows:
\begin{align}
\bbE\sup_{r\le t}|M_{R,n}(r)|^p
&\le C_p  \bbE\Bigl(\int_0^{t\wedge\tau_{R,n}}
\sum_{k\ge1}|\langle\Heff(u),G_k(s,u)\rangle_{L^2}|^2\,\rd s\Bigr)^{p/2}\notag\\
&\le C_{p,T}\bbE\Bigr[(1+\sup_{s\le t\wedge\tau_{R,n}}\|u(s)\|_{L^2}^2)^{p/2}D_{R,n}(t)^{p/2}\Bigr]\notag\\
&\le \epsilon\,\bbE D_{R,n}(t)^p
+C_{p,T,\varepsilon}\bbE\Bigl[1+
\sup_{s\le t\wedge\tau_{R,n}}\|u(s)\|_{L^2}^{2p}\Bigr].
\label{eq:corrected-H1-BDG}
\end{align}
Choosing \(\varepsilon>0\) small enough we can absorb the first term on the right-hand side of \eqref{eq:corrected-H1-BDG} into the left-hand side of \eqref{eq:corrected-energy-before-BDG}. By the $L^p(\Omega)$-energy estimate in $L^2$ (Lemma \ref{lem:L2-estimate}),
\begin{multline*}
\bbE\sup_{r\le t}
\mathscr E(u(r\wedge\tau_{R,n}))^p
+\bbE D_{R,n}(t)^p\\
\le
C_{p,T}\left(
\bbE\mathscr E(u_0)^p
+1+\bbE\|u_0\|_{L^2}^{2p}
\right)
+C_{p,T}\int_0^t
\bbE\sup_{\rho\le s}
\mathscr E(u(\rho\wedge\tau_{R,n}))^p\,\rd s,
\end{multline*}
and since $\mathscr E(u_0)\ge1$ and
$\|u_0\|_{L^2}^2\le C\mathscr E(u_0)$, this reduces to
\begin{equation}
\bbE\sup_{r\le t}
\mathscr E(u(r\wedge\tau_{R,n}))^p
+\bbE D_{R,n}(t)^p
\le
C_{p,T}\bbE\mathscr E(u_0)^p
+C_{p,T}\int_0^t
\bbE\sup_{\rho\le s}
\mathscr E(u(\rho\wedge\tau_{R,n}))^p\,\rd s.
\label{eq:corrected-energy-gronwall-ready}
\end{equation}
Gronwall's lemma now gives
\begin{equation}
\label{eq:corrected-stopped-energy-estimate}
\begin{aligned}
\bbE\sup_{t\in[0,T]}\mathscr E(u(t\wedge\tau_{R,n}))^p
+\bbE D_{R,n}(T)^p
& \le C_{p,T} \bbE\mathscr E(u_0)^p \le C_{p,T} (1+\bbE \|u_0\|_{H^1}^{4p}),
\end{aligned}
\end{equation}
so that after letting \(R\to\infty\) and then $n \to \infty$ in \eqref{eq:corrected-stopped-energy-estimate}, together with Fatou's lemma and \eqref{eq:corrected-energy-coercive intro}, we obtain the desired energy estimate \eqref{ineq:H^1 estimate}.

\end{proof}

\begin{remark}[Weaker integrability of $G$ at the origin]
\label{rem:intermediate-weaker-origin}
For the proof of Proposition \ref{prop:energy estimate in H1}, the integrability assumption of  $g^{(0)}$ in
Assumption~\ref{ass:noise_intermediate} (see  \eqref{eq:g0-assumption_intermediate}) can be weakened to
\begin{equation}\label{eq:intermediate-weaker-origin}
    g^{(0)}\in
L^\infty((0,T)\times\Omega;W^{1,4}(\ell^2)),
\qquad T>0.
\end{equation}
Indeed, the proof of Lemma~\ref{lemma:parab reg variational}
still yields
\[
u\in H^{\theta,r}_{\mathrm{loc}}
((0,\sigma);{}_\nu H^{3-4\theta,q})
\quad\text{a.s.},
\qquad
0\le\theta<\tfrac12,\quad
r\in[2,\infty),\quad q\in[2,4].
\]
In particular, taking $(r,q)=(6,4)$ provides the regularity needed to
justify the It\^o formula in Lemma \ref{lem:Ito-formula-GL-energy}.
\end{remark}

We now need the following Lemma to control the $H^3$-norm of $u$ by the $H^1$-norm of $\Heff(u)$.

\begin{lemma}[Elliptic estimate]
\label{lem:d4-endpoint-elliptic}
Let $d\in \{1,2,3\}$. Then there exists \(C>0\) such that, for every
\(v\in   {}_\nu  H^3\),
\begin{equation}
\label{eq:d4-endpoint-elliptic-explicit1}
    \|v\|_{H^3}^2
    \leq
    C(1+\|v\|_{H^1}^4)
    +
    C(1+\|v\|_{H^1}^2)
    \|\Heff(v)\|_{H^1}^2.
\end{equation}
\end{lemma}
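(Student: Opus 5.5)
The plan is to derive a third‑order equation for $v$ from the definition of $\Heff$ and use elliptic regularity for the Neumann Laplacian. Writing
\[
\alpha\,\nabla\Delta v=\nabla\Heff(v)-\kappa_1\nabla v+\kappa_2\nabla(|v|^2v),
\]
and using $\|v\|_{H^3}^2\eqsim\|\nabla\Delta v\|_{L^2}^2+\|v\|_{H^1}^2$ for $v\in{}_\nu H^3$ (the same elliptic fact used for the coercivity of $A$ in Proposition~\ref{thm:LWP in intermediate}), the task reduces to bounding $\|\nabla\Delta v\|_{L^2}$. Rather than estimating $\|\nabla(|v|^2v)\|_{L^2}$ crudely, I would exploit the monotone structure of the cubic term, as in Lemma~\ref{lem:L2-estimate}. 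Concretely, I would test the identity above against $\nabla\Delta v$; this is legitimate since $\partial_n v=0$, so boundary terms vanish when integrating by parts once. This gives
\[
\alpha\|\nabla\Delta v\|_{L^2}^2=\langle\nabla\Heff(v),\nabla\Delta v\rangle-\kappa_1\langle\nabla v,\nabla\Delta v\rangle-\kappa_2\int_{\mathscr D}\Delta(|v|^2v)\cdot\Delta v\,\rd x .
\]
The first two terms are bounded by $\varepsilon\|v\|_{H^3}^2+C_\varepsilon(\|\Heff(v)\|_{H^1}^2+\|v\|_{H^1}^2)$ via Cauchy--Schwarz, Young's inequality and interpolation.

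For the cubic term I expand
\[
\Delta(|v|^2v)=|v|^2\Delta v+2(v\cdot\Delta v)v+R(v),\qquad |R(v)|\lesssim|v|\,|\nabla v|^2 .
\]
The leading part gives $-\kappa_2\int(|v|^2|\Delta v|^2+2(v\cdot\Delta v)^2)\le0$, which has a good sign and can be dropped. It then remains to control $\int|v||\nabla v|^2|\Delta v|$.

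For this I would first record a second‑order estimate obtained by testing $\Heff(v)$ against $-\Delta v$, exactly as in the proof of Lemma~\ref{lem:L2-estimate}:
\[
\alpha\|\Delta v\|_{L^2}^2+\kappa_2\int_{\mathscr D}\bigl(|v|^2|\nabla v|^2+2\textstyle\sum_j(v\cdot\partial_jv)^2\bigr)\le\|\Heff(v)\|_{L^2}\|\Delta v\|_{L^2}+|\kappa_1|\|\nabla v\|_{L^2}^2 .
\]
This yields both $\|v\|_{H^2}\lesssim\|\Heff(v)\|_{L^2}+\|v\|_{H^1}$ and the weighted bound $\||v|\nabla v\|_{L^2}\lesssim\|\Heff(v)\|_{L^2}+\|v\|_{H^1}$. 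Note that neither bound carries extra powers of $\|v\|_{H^1}$, which is what makes it usable here.

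The main obstacle is estimating the remainder
\[
\int_{\mathscr D}|v||\nabla v|^2|\Delta v|\,\rd x
\]
with exactly the powers in \eqref{eq:d4-endpoint-elliptic-explicit1}. A naive Hölder/Gagliardo--Nirenberg bound such as $\|v\|_{L^6}\|\nabla v\|_{L^6}^2\|\Delta v\|_{L^2}\lesssim\|v\|_{H^1}\|v\|_{H^2}^3$ only produces $\|v\|_{H^1}^4\|\Heff\|^2$-type terms, so it is not enough. What I would try first is to place the weight on one gradient factor: bound the remainder by $\||v|\nabla v\|_{L^2}\,\|\nabla v\|_{L^3}\,\|\Delta v\|_{L^6}$. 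Then I would use $\|\nabla v\|_{L^3}\lesssim\|v\|_{H^1}^{1/2}\|v\|_{H^2}^{1/2}$ and $\|\Delta v\|_{L^6}\lesssim\|v\|_{H^3}$ (valid for $d\le3$). Together with the two second‑order bounds this gives
\[
\int|v||\nabla v|^2|\Delta v|\lesssim\|v\|_{H^1}^{1/2}\bigl(\|\Heff(v)\|_{L^2}+\|v\|_{H^1}\bigr)^{3/2}\|v\|_{H^3}.
\]
After Young's inequality this becomes
\[
\varepsilon\|v\|_{H^3}^2+C\|v\|_{H^1}\bigl(\|\Heff(v)\|_{L^2}+\|v\|_{H^1}\bigr)^{3}.
\]
To reach the stated form I would split $(\|\Heff(v)\|_{L^2}+\|v\|_{H^1})^3$. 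The pure $\|v\|_{H^1}$ part gives the $\|v\|_{H^1}^4$ term. For the mixed part, one factor of $\|\Heff(v)\|_{L^2}$ is expected to be absorbed using the dissipation‑type identity above. Specifically, I would use $\|\Heff\|_{L^2}^2\lesssim\|\Heff\|_{H^1}\|\Heff\|_{(H^1)^*}$, and note that $\langle\Heff(v),v\rangle\le C\|v\|_{L^2}^2$ controls the low‑frequency part. This should yield a bound of the form $(1+\|v\|_{H^1}^2)\|\Heff(v)\|_{H^1}^2$. If this splitting fails, the fallback is to redistribute the Hölder exponents, using $L^2\cdot L^\infty\cdot L^2$ with the Agmon inequality $\|\nabla v\|_{L^\infty}\lesssim\|v\|_{H^2}^{1/2}\|v\|_{H^3}^{1/2}$.

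Finally, I would choose $\varepsilon$ small to absorb all $\|v\|_{H^3}^2$ contributions into the left‑hand side. Collecting the remaining terms then gives \eqref{eq:d4-endpoint-elliptic-explicit1}.
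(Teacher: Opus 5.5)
The reduction up to the remainder is correct. That covers the $\nabla\Delta v$ identity, the good sign of $|v|^2|\Delta v|^2+2(v\cdot\Delta v)^2$, the second-order bounds $\|v\|_{H^2}+\||v|\nabla v\|_{L^2}\lesssim\|\Heff(v)\|_{L^2}+\|v\|_{H^1}$, and the bound $\varepsilon\|v\|_{H^3}^2+C\|v\|_{H^1}(\|\Heff(v)\|_{L^2}+\|v\|_{H^1})^3$ for the remainder. The gap is in the last step. That intermediate quantity is not dominated by the right-hand side of \eqref{eq:d4-endpoint-elliptic-explicit1}, so no splitting can recover the stated powers. Take $v=A\phi$ with $\phi\in{}_\nu H^3\setminus\{0\}$ fixed and $A\to\infty$. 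Then $\|v\|_{H^1}\sim A$ and $\|\Heff(v)\|_{L^2}\sim\|\Heff(v)\|_{H^1}\sim A^3$. Your bound grows like $A^{10}$, while $\|v\|_{H^1}^4+(1+\|v\|_{H^1}^2)\|\Heff(v)\|_{H^1}^2\sim A^8$.

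Your proposed fix does not close this gap. The interpolation $\|\Heff\|_{L^2}^2\le\|\Heff\|_{H^1}\|\Heff\|_{(H^1)^*}$ is true, but the one-sided bound $\langle\Heff(v),v\rangle\le C\|v\|_{L^2}^2$ gives no control of the dual norm. Because of the cubic term, the best available bound is $\|\Heff(v)\|_{(H^1)^*}\lesssim\|v\|_{H^1}+\|v\|_{H^1}^3$, and this is sharp for $v=A\phi$. It only yields $\|v\|_{H^3}^2\lesssim 1+\|v\|_{H^1}^4+(1+\|v\|_{H^1}^4)\|\Heff(v)\|_{H^1}^2$. That weaker estimate would, for instance, not give the $\mathbb E\|u_0\|_{H^1}^8$ moment count used in Theorem~\ref{theorem:GWP_intermediate}. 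The Agmon fallback is worse still: it produces $(\|\Heff(v)\|_{L^2}+\|v\|_{H^1})^{10/3}$, which is superquadratic in $\Heff$. The loss sits in your second-order bounds. $\|\Heff(v)\|_{L^2}$ contains $\kappa_2\||v|^2v\|_{L^2}\sim A^3$, whereas $\|v\|_{H^2}\sim A$ and $\||v|\nabla v\|_{L^2}\sim A^2$.

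The missing ingredient is an $H^1$ bound on the cubic term that is quadratic in $\Heff$ with only a factor $\|v\|_{H^1}^2$. The paper obtains it as follows:
\begin{itemize}
\item Test $f:=\kappa_1v-\Heff(v)=-\alpha\Delta v+\kappa_2|v|^2v$ against $|v|^4v$, and use $\|v\|_{L^{20/3}}^5\le\|v\|_{L^4}\|v\|_{L^8}^4$. This gives $\int|v|^4|\nabla v|^2+\|v\|_{L^8}^8\lesssim\|f\|_{L^4}^2\|v\|_{L^4}^2$.
\item Combine this with $|\nabla(|v|^2v)|\le3|v|^2|\nabla v|$, with $\|v\|_{L^6}^6\le\|v\|_{L^4}^2\|v\|_{L^8}^4$, and with $\|f\|_{L^4}\lesssim\|v\|_{H^1}+\|\Heff(v)\|_{H^1}$. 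Here $\Heff$ enters through its $H^1$ norm rather than through a lossy $L^2$ bound. The result is $\||v|^2v\|_{H^1}^2\lesssim\|v\|_{H^1}^4+\|v\|_{H^1}^2\|\Heff(v)\|_{H^1}^2$.
\item Plain elliptic regularity, $\|v\|_{H^3}\lesssim\|v\|_{L^2}+\|\Heff(v)\|_{H^1}+\||v|^2v\|_{H^1}$, then finishes the proof.
\end{itemize}
In your framework, you could simply apply Cauchy--Schwarz to the cubic term in the $\nabla\Delta v$ identity and insert this bound. The sign structure is not needed.
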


\begin{proof}
By elliptic regularity,
and the identity
$
\alpha\Delta v
=
\Heff(v)-\kappa_1v+\kappa_2|v|^2v
$
we infer 
\[
\|v\|_{H^3}
\leq
C\left(
\|v\|_{L^2}
+\|\Heff(v)\|_{H^1}
+\|v\|_{H^1}
+\||v|^2v\|_{H^1}
\right),
\]
and by interpolation, 
\[
\|v\|_{H^1}
\leq
\varepsilon\|v\|_{H^3}
+C_\varepsilon\|v\|_{L^2},
\]
so that by absorbing the
$H^3$-term into the left-hand side, we conclude that
\begin{equation}\label{ineq:ell reg for Heff}
\|v\|_{H^3}
\leq
C\left(
\|v\|_{L^2}
+\|\Heff(v)\|_{H^1}
+\||v|^2v\|_{H^1}
\right).
\end{equation}
Hence, it suffices to show that 
\begin{equation}
\label{eq:d4-cubic-H1-final}
    \||v|^2v\|_{H^1}^2
    \le C
    \|v\|_{H^1}^4
+C \|v\|_{H^1}^2\|\Heff(v)\|_{H^1}^2,
\end{equation}
since by \eqref{eq:d4-cubic-H1-final},  
\eqref{ineq:ell reg for Heff}, and the embedding $H^1\hookrightarrow L^2$, we obtain
\begin{align*}
    \|v\|_{H^3}^2
    &\le C(
    \|v\|_{H^1}^2
    +\|\Heff(v)\|_{H^1}^2
    +\|v\|_{H^1}^4 
   +\|v\|_{H^1}^2\|\Heff(v)\|_{H^1}^2)
   \\
   &\le C (1+\|v\|_{H^1}^4)
    +
    C(1+\|v\|_{H^1}^2)
    \|\Heff(v)\|_{H^1}^2,
\end{align*}
where the last step follows from $
    \|v\|_{H^1}^2\leq 1+\|v\|_{H^1}^4
$.

Let us then prove \eqref{eq:d4-cubic-H1-final}. 
Set
\begin{equation}\label{eq:def of f with Heff}
      f \coloneq \kappa_1v-\Heff(v)
      =-\alpha\Delta v+\kappa_2|v|^2v.
\end{equation}
Testing this equation with \(|v|^4v\), and using
\(\partial_n v=0\), gives
\begin{align*}
    \alpha\int_{\mathscr D}
    \Big(
        |v|^4|\nabla v|^2
        +4|v|^2\sum_{j=1}^d
          (v\cdot\partial_jv)^2
    \Big)\,{\rm d}x
    +\kappa_2\|v\|_{L^8}^8  
    =\int_{\mathscr D} f\cdot |v|^4v\, {\rm d}x,
\end{align*}
while interpolation between \(L^4\) and \(L^8\),
\[
    \|v\|_{L^{20/3}}
    \leq
    \|v\|_{L^4}^{1/5}\|v\|_{L^8}^{4/5}.
\]
Consequently, H\"older's and Young's inequalities imply
\begin{align*}
    \Big|
    \int_{\mathscr D}f\cdot |v|^4v\,{\rm d}x
    \Big|
    \leq
    \|f\|_{L^4}\|v\|_{L^{20/3}}^5 
    \leq
    \|f\|_{L^4}\|v\|_{L^4}\|v\|_{L^8}^4 
    \leq
    \frac{\kappa_2}{2}\|v\|_{L^8}^8
    +C\|f\|_{L^4}^2\|v\|_{L^4}^2.
\end{align*}
After absorbing the $\|v\|_{L^8}^8$-term, we obtain
\begin{equation}
\label{eq:d4-quintic-test1}
    \int_{\mathscr D}|v|^4|\nabla v|^2\,{\rm d}x
    +\|v\|_{L^8}^8
    \leq
    C\|f\|_{L^4}^2\|v\|_{L^4}^2.
\end{equation}
Since
$
    |\nabla(|v|^2v)|
    \leq 3|v|^2|\nabla v|,
$
\begin{equation}\label{ineq:gradient term for Heff lemma}
     \|\nabla(|v|^2v)\|_{L^2}^2
    \le C \int_{\mathscr D}|v|^4|\nabla v|^2\,{\rm d}x.
\end{equation}
Moreover, by interpolation $\|v\|_{L^6}\le C \|v\|_{L^4}^{1/3} \|v\|_{L^8}^{2/3}$ and therefore 
\begin{equation}\label{ineq:estimating L2 term Heff lemma}
      \||v|^2v\|_{L^2}^2
    =\|v\|_{L^6}^6
    \le C 
    \|v\|_{L^4}^2\|v\|_{L^8}^4.
\end{equation}
Consequently, \eqref{ineq:estimating L2 term Heff lemma}, \eqref{ineq:gradient term for Heff lemma} and \eqref{eq:d4-quintic-test1} give
\begin{equation}\label{ineq:idkk1}
    \||v|^2v\|_{H^1}^2\le C \|f\|_{L^4}^2\|v\|_{L^4}^2  +C\|v\|_{L^4}^3 \|f\|_{L^4}.
\end{equation}
By Young's inequality and the Sobolev embedding $H^1\hookrightarrow L^4$, 
$$\|v\|_{L^4}^3 \|f\|_{L^4} \le C \|v\|_{L^4}^4+C\|v\|_{L^4}^2\|f\|_{L^4}^2 \le C \|v\|_{H^1}^4+C\|v\|_{H^1}^2\|f\|_{H^1}^2  .$$
Also, by \eqref{eq:def of f with Heff} there holds 
$$ \|f\|_{H^1}\le C (\|v\|_{H^1}+ \|\Heff(v)\|_{H^1}) $$
and thus 
\begin{equation}\label{ineq:idkk2}
    \|v\|_{L^4}^3 \|f\|_{L^4} \le C \|v\|_{H^1}^4 + C \|v\|_{H^1}^2\|\Heff (v)\|_{H^1}^2.
\end{equation}
Finally, \eqref{ineq:idkk1} and \eqref{ineq:idkk2} imply \eqref{eq:d4-cubic-H1-final} and we are done. 

\end{proof}

With the energy estimates at hand, we are now ready to prove the global well-posedness of \eqref{eq: LLB1} in the intermediate setting for $d \leq 3$, as well as the properties of the solution stated in Theorem \ref{theorem:GWP_intermediate}.

\begin{proof}[Proof of Theorem \ref{theorem:GWP_intermediate}]
    By a localization argument (cf.\,\cite[Remark 5.3]{AgVe25}), in order to prove global existence, it suffices to consider $u_0 \in L^\infty_{\mathcal F_0}(\Omega;H^1)$. Hence, we can apply Proposition \ref{prop:energy estimate in H1} with $p=1$, which together with the blow up criterion \eqref{eq:blow up criterion in intermediate} of Proposition \ref{thm:LWP in intermediate}, implies  
    the global existence of equation \eqref{eq: LLB1}. The instantaneous regularization \eqref{eq:parabolic reg of intermediate sol0} follows from \eqref{eq:parabolic reg of intermediate sol}.

    We now establish the energy estimate \eqref{eq:intermediate-main-energy-estimate 2}, for which it suffices to show that for every $T>0$,
\begin{equation}\label{ineq:integral H3 estimate in intermediate}
    \mathbb E\int_0^{T}
    \|u(t)\|_{H^3}^2
\,\rd t \le C_{T} (1+\bbE \|u_0\|_{H^1}^{8}).
\end{equation} 
By Lemma \ref{lem:d4-endpoint-elliptic}, for almost every $t\in (0,T]$,
\begin{equation}
\label{eq:d4-endpoint-elliptic-explicit2}
    \|u(t)\|_{H^3}^2
    \leq
    C(1+\|u(t)\|_{H^1}^4)
    +
    C(1+\|u(t)\|_{H^1}^2)
    \|\Heff(u(t))\|_{H^1}^2.
\end{equation}
Setting
\[
X_T \coloneq \sup_{t\in[0,T)}\|u(t)\|_{H^1},
\qquad
Y_T \coloneq \int_0^{T}\|\Heff(u(t))\|_{H^1}^2\,\rd t,
\]
we have that \eqref{eq:d4-endpoint-elliptic-explicit2} implies 
\[
\int_0^{T}\|u(t)\|_{H^3}^2\,\rd t
\le C_T (1+X_T^4)+C(1+X_T^2)Y_T.
\]
Therefore, by Cauchy--Schwarz's inequality,
\[
\begin{aligned}
\mathbb E\int_0^{T}\|u(t)\|_{H^3}^2\,\rd t
&\le C_T (1+\mathbb EX_T^4)
+C\mathbb EY_T+C \mathbb E(X_T^2Y_T) \\
&\le C_T (1+\mathbb EX_T^4)
+C (\mathbb EY_T^2)^{1/2} +C
(\mathbb EX_T^4)^{1/2}
(\mathbb EY_T^2)^{1/2}.
\end{aligned}
\]
Thus, taking $p=2$ in Proposition~\ref{prop:energy estimate in H1} gives 
\[
\mathbb EX_T^4+\mathbb EY_T^2
\leq
C_T\left(1+\mathbb E\|u_0\|_{H^1}^8\right).
\]
Consequently, by Young's inequality, we obtain
\[
\mathbb E\int_0^{T}\|u(t)\|_{H^3}^2\,\rd t
\leq
C_T\left(1+\mathbb E\|u_0\|_{H^1}^8\right),
\]
which is precisely \eqref{ineq:integral H3 estimate in intermediate}.

\end{proof}

\begin{proof}[Proof of Corollary \ref{Coroll:GWP_intermediate}]
     Following the proof of \cite[Proposition 4.5]{AgVe24a} it suffices to consider $u_0 \in L^\infty_{\mathcal F_0}(\Omega ;H^1)$ and to have the linear coercivity \eqref{ineq:coerc_A}, the Lipschitz estimates \eqref{ineq:Local_Lip_F}--\eqref{ineq:Local_Lip_G}, and an energy estimate of the form \eqref{eq:intermediate-main-energy-estimate 2}. Then, the arguments in the proof of \cite[Theorem 3.8]{AgVe24a} can be used verbatim.
    
\end{proof}

\subsection{Proof of well-posedness in the strong setting}\label{Sec:strong_setting}

The aim of this subsection is to prove Theorem \ref{theorem:GWP_d=3_L2_time}.
Recall that in the strong setting we have $\mathcal V=  {}_\nu H^4=\{ u \in H^4 \colon \partial_n u = \partial_n \Delta u=0\} $, $\mathcal H =  {}_\nu H^2 = \{ u \in H^2 \colon \partial_n u = 0 \}$ and consider $d\in \{1,2,3\}$.

\subsubsection{Local well-posedness in the strong setting} \label{sec: LWP variational strong}
We begin by obtaining local well-posedness for \eqref{eq: LLB1} in the strong setting:

\begin{proposition}[Local well-posedness] \label{prop:LWP in strong}
     Let $d\in\{1,2,3\}$ and suppose that Assumption \ref{assumptions on g strong setting} holds. Then, for any $u_0 \in L_{\mathcal F_0}^{0}(\Omega;  {}_\nu H^2)$, there exists a unique maximal solution $(u,\sigma)$ of equation \eqref{eq: LLB1} with $\sigma>0$ a.s., such that 
     \[
        u \in L_{\rm loc}^2([0,\sigma);   {}_\nu  H^4)\cap  C([0,\sigma);  {}_\nu H^2)\, \text{ a.s.}
    \]
     Moreover, the following blow up criterion holds: 
     \begin{equation}\label{eq:blow up criterion in strong}
    \mathbb{P}\bigl(\sigma<\infty, \sup_{t\in[0,\sigma)}\|u(t)\|_{H^2}<\infty\bigr)=0.
\end{equation}
\end{proposition}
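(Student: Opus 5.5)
As in Propositions \ref{prop:LWP in weak setting} and \ref{thm:LWP in intermediate}, I will verify the hypotheses of Theorem \ref{Thm:LWP_variational}, now with $\mathcal V={}_\nu H^4$, $\mathcal H={}_\nu H^2$ and $\mathcal V^*\cong L^2$ (duality through the ${}_\nu H^2$-structure $\langle u,v\rangle=(u,v)_{L^2}+(\Delta u,\Delta v)_{L^2}$). Then $\mathcal V_\beta={}_\nu H^{4\beta}$. Every Lipschitz estimate will have subcriticality \eqref{ineq:subcriticality condition} \emph{strict}, so \eqref{eq: blow-up-criterion-subcritical} gives the blow-up criterion \eqref{eq:blow up criterion in strong}.

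\emph{Coercivity.} For smooth $u$ with $\partial_n u=\partial_n\Delta u=0$, integrating by parts gives
\[
\int_{\mathscr D}(u+\Delta^2 u)\cdot Au\,\rd x=\alpha_1\|\Delta^2u\|_{L^2}^2+\text{(lower order terms)}.
\]
The lower order terms involve $\|\nabla\Delta u\|_{L^2}^2$, $\|\Delta u\|_{L^2}^2$, $\|\nabla u\|_{L^2}^2$ and $\|u\|_{L^2}^2$. By elliptic regularity, $\|u\|_{H^4}\eqsim\|\Delta^2u\|_{L^2}+\|u\|_{L^2}$. Interpolating the intermediate terms between $H^2$ and $H^4$ and absorbing them gives \eqref{ineq:coerc_A}.

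\emph{Nonlinearities.} All estimates are in $L^2$ for $d\le3$.
\begin{itemize}
\item For $F_3$: $\||u|^2u-|v|^2v\|_{L^2}\lesssim(\|u\|_{L^\infty}^2+\|v\|_{L^\infty}^2)\|u-v\|_{L^2}$, and $H^{2}\hookrightarrow L^\infty$ gives $\beta=1/2+$. This is harmless: any $\beta<1$ with $(2\beta-1)\cdot3<1$ works, e.g.\ $H^{9/4}$.
\item For $F_1=\Delta(|u|^2u)$: expanding yields terms $|u|^2\Delta u$ and $u|\nabla u|^2$. I bound the differences using $H^{s}\hookrightarrow L^\infty\cap W^{1,6}$ for $s\ge 2$ and Hölder, e.g.\ $\||u|^2\Delta(u-v)\|_{L^2}\le\|u\|_{L^\infty}^2\|u-v\|_{H^2}$ and $\|u\nabla u\cdot\nabla(u-v)\|_{L^2}\le\|u\|_{L^\infty}\|\nabla u\|_{L^4}\|\nabla(u-v)\|_{L^4}$. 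Everything is then controlled by $(1+\|u\|_{H^{s}}^2+\|v\|_{H^{s}}^2)\|u-v\|_{H^{s}}$ with $s=2+\eta$, which is needed for $L^\infty$ in $d=3$ at endpoints. This gives $\rho=2$, $\beta=(2+\eta)/4$, and $(2\beta-1)\cdot3=3\eta/2<1$.
\item For $F_2=\Delta u\times u$: $\|\Delta(u-v)\times u\|_{L^2}+\|\Delta v\times(u-v)\|_{L^2}\lesssim(\|u\|_{H^{s}}+\|v\|_{H^s})\|u-v\|_{H^s}$ with the same $s$, so $\rho=1$.
\end{itemize}
No duality is needed here, since $\mathcal V^*=L^2$.

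\emph{Noise.} This is the step I expect to be the main obstacle. I must show $G(u)\in\mathscr L_2(\ell^2,{}_\nu H^2)$, i.e.\ $g(\cdot,u)\in H^2(\ell^2)$ with $\partial_n g(\cdot,u)=0$. The boundary condition follows from the chain rule: $\partial_n[g(x,u(x))]=\partial_n^x g+\partial_y g\,\partial_n u=0$, by \eqref{eq:g-Neumann-compatibility} and $\partial_n u=0$. This is exactly why the compatibility assumption is imposed.

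For the Lipschitz estimate, I differentiate twice:
\[
\nabla^2 G(u)=\nabla_x^2g+2\partial_{xy}^2g\,\nabla u+\partial_{yy}^2g[\nabla u,\nabla u]+\partial_yg\,\nabla^2u.
\]
I take differences term by term, using \eqref{eq:gxx-strong-Lip}--\eqref{eq:gyy-strong-Lip}. Typical worst terms are:
\begin{itemize}
\item $(\partial_yg(u)-\partial_yg(v))\nabla^2v$, bounded by $\|u-v\|_{L^\infty}\|v\|_{H^2}$;
\item $(\partial_{yy}g(u)-\partial_{yy}g(v))|\nabla v|^2$, bounded by $\|u-v\|_{L^\infty}\|\nabla v\|_{L^4}^2$;
\item $\partial_{yy}g(u)\nabla(u-v)(\nabla u+\nabla v)$, bounded in $L^4\cdot L^4$.
\end{itemize}
The zero-order part is handled with \eqref{eq:g0-H2-strong} as in Lemma \ref{lem:noise-trace}. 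This gives \eqref{ineq:Local_Lip_G} with $\rho=2$ and $\beta=(2+\eta)/4$ (needing $H^{2+\eta}\hookrightarrow L^\infty\cap W^{1,4}$), which is strictly subcritical for small $\eta$.

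Measurability and \eqref{eq:G0-L2loc} follow from Assumption \ref{assumptions on g strong setting}. Theorem \ref{Thm:LWP_variational} then yields the maximal solution, and the strict inequalities yield \eqref{eq:blow up criterion in strong}.
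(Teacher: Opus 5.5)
Your proposal is correct and follows the paper's proof essentially step by step. The paper likewise proves coercivity in the triple ${}_\nu H^4\hookrightarrow {}_\nu H^2\hookrightarrow L^2$ by integration by parts plus interpolation of $\|\nabla\Delta u\|_{L^2}$, and derives $L^2$-valued Lipschitz bounds for $F_1,F_2,F_3$ from $L^\infty$ and $W^{1,4}$ embeddings. It also obtains an ${}_\nu H^2$-Lipschitz bound for $G$ from the Neumann compatibility condition and the second-order chain rule; all estimates are strictly subcritical, so \eqref{eq: blow-up-criterion-subcritical} gives \eqref{eq:blow up criterion in strong}.

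One small correction: $H^2\hookrightarrow L^\infty\cap W^{1,4}$ already holds for $d\le 3$. Your $\eta>0$ is therefore not needed for an endpoint embedding, only to make $\beta>1/2$ strictly, exactly as in the paper's choice $\beta\in(\tfrac12,\tfrac23)$.
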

\begin{proof}

We verify the assumptions of Theorem \ref{Thm:LWP_variational}.  We begin by proving the coercivity of the linear part of \eqref{eq: LLB1}, i.e.,
\begin{equation}\label{eq:linear coercivity strong setting}
    \langle Au,u \rangle_{L^2,  {}_\nu H^4} \ge \theta \|u\|_{H^4}^2 - M \|u\|_{H^2}^2,
\end{equation}
for some $\theta>0$ and $ M \geq 0$.

We calculate
\begin{align*}
     \langle u,Au \rangle_{ {}_\nu H^4, L^2} & = \int_{\mathscr D} ( \alpha_1\Delta^2 u +\alpha_2\Delta u +\alpha_3 u) (u+\Delta^2u) \, {\rm d}x
     \\
     &= \alpha_1 \|\Delta^2u\|^2_{L^2} - \alpha_2 \|\nabla \Delta u\|^2_{L^2} + (\alpha_1+\alpha_3) \|\Delta u\|^2_{L^2} - \alpha_2  \|\nabla u\|^2_{L^2} +\alpha_3 \|u\|^2_{L^2}. 
\end{align*}
By elliptic regularity (cf.\,\cite[Proposition 7.4]{Ta10}), 
$$\alpha_1 \|\Delta^2 u\|_{L^2}^2 + (\alpha_1+\alpha_3) \|\Delta u\|^2_{L^2} - \alpha_2  \|\nabla u\|^2_{L^2} +\alpha_3 \|u\|^2_{L^2} \ge c \|u\|_{H^4}^2- C\|u\|_{H^2}^2 .$$
On the other hand, by interpolation and Young's inequality we get
$$ \|\nabla \Delta u\|_{L^2}^2 \le \|u\|_{H^3}^2 \le C \|u\|_{H^4} \|u\|_{H^2}\le  \epsilon \|u\|_{H^4}^2 + C_\epsilon \|u\|_{H^2}^2 .$$
Hence, by picking $\epsilon>0$ small enough, the inequalities above, and a standard absorption argument give \eqref{eq:linear coercivity strong setting}.

We now verify the growth condition \eqref{ineq:Local_Lip_F} for the nonlinearities $F_1,F_2,F_3$ as defined in \eqref{eq:definition of F}. For $u,v \in  {}_\nu H^4$ we estimate 
\begin{equation}\label{ineq:estimating F1 and F3 strong setting}
\begin{aligned}
    \| F_1(u) - F_1(v) \|_{L^2} + \| F_3(u) - F_3(v) \|_{L^2} & \lesssim \| |u|^2 u - |v|^2 v \|_{H^2} 
    \\[4pt]
    &=\| (u-v) |u|^2 + v(|u|^2-|v|^2)\|_{H^2} 
    \\ 
    &\stackrel{\mathrm{(i)}}{\lesssim}   \|u-v\|_{H^2} \|u\|^2_{H^2} + \|v\|_{H^2}  \||u|^2-|v|^2\|_{H^2} 
    \\
    &\stackrel{\mathrm{(ii)}}{\lesssim} \|u-v\|_{H^2} \|u\|^2_{H^2} + \|v\|_{H^2}  \|u-v\|_{H^2}  \|u+v\|_{H^2} 
    \\
    & \stackrel{\mathrm{(iii)}}{\lesssim}  (\|u\|_{\mathcal V_\beta}^2 + \|v\|_{\mathcal V_\beta}^2 ) \| u-v \|_{\mathcal V_\beta},
\end{aligned}
\end{equation}
where (i) and (ii) follow from the Banach algebra property of $H^2$ since $H^2 \hookrightarrow L^\infty$ in dimension $d\le3$,
and (iii) follows from the embedding $\mathcal V_{\beta}\hookrightarrow H^{2}$ which holds for any $\beta \ge 1/2$. Therefore  \eqref{ineq:Local_Lip_F} holds with $\rho=2$ and any $\beta \in (\tfrac 12, \tfrac 23)$, in which case the sub-criticality condition \eqref{ineq:subcriticality condition} is satisfied with strict inequality. For the nonlinearity $F_2$ we estimate 
\begin{equation}
    \begin{aligned}
        \| F_2(u)-F_2(v)\|_{L^2} &\eqsim  \| \Delta(u-v) \times u + \Delta v \times (u-v) \|_{L^2}
        \\[4pt]
        & \le  \| \Delta(u-v) \times u\|_{L^2}+ \| \Delta v \times (u-v) \|_{L^2}
        \\
        & \stackrel{\mathrm{(i)}}{\le}  \| \Delta(u-v)\|_{L^2} \|u\|_{L^\infty} + \|\Delta v\|_{L^2} \|u-v\|_{L^\infty}
        \\
        & \stackrel{\mathrm{(ii)}}{\lesssim}  \|u-v\|_{H^2} \|u\|_{H^2} + \|v\|_{H^2} \|u-v\|_{H^2}
        \\[4pt]
        & \stackrel{\mathrm{(iii)}}{\lesssim} \|u-v\|_{\mathcal V_\beta} (\|u\|_{\mathcal V_\beta} + \|v\|_{\mathcal V_\beta}),
    \end{aligned}
\end{equation}
where (i) follows from the elementary inequality $|z\times w|\le |z| |w|$, (ii) follows from the Sobolev embedding $H^2 \hookrightarrow L^\infty$, and (iii) follows from the embedding $\mathcal V_\beta \hookrightarrow H^{2}$ which holds for any $\beta \ge 1/2$. Therefore \eqref{ineq:Local_Lip_F} is fulfilled with $\rho=1$ and any $\beta\in (\tfrac 12, \tfrac 34)$, in which case the sub-criticality condition \eqref{ineq:subcriticality condition} is satisfied with strict inequality. Note that, for example, $\beta=7/12$ works for all cases. 

We now verify the assumptions on the noise term $G$. The measurability of $G$ and the integrability condition \eqref{eq:G0-L2loc} follow by Assumption \ref{assumptions on g strong setting}. We now check the local Lipschitz
condition \eqref{ineq:Local_Lip_G}. 
Let $u,v\in \mathcal V_\beta$. For notational brevity, we write $g(u)$ and suppress the dependence on the remaining variables. By the Neumann compatibility condition \eqref{eq:g-Neumann-compatibility}, the chain
rule, and noting that $\partial_n v=\partial_n u=0$,
\[
\begin{aligned}
\partial_n\bigl(g(\cdot,u)-g(\cdot,v)\bigr)
&=
\partial_n^xg(\cdot,u)-\partial_n^xg(\cdot,v)
+\partial_yg(\cdot,u)\partial_nu
-\partial_yg(\cdot,v)\partial_nv
=0.
\end{aligned}
\]
Therefore,  $G(u)-G(v)\in\mathcal L_2(\ell^2,  {}_\nu  H^2)$. 
Using the elliptic norm equivalence on $  {}_\nu  H^2$, we obtain
\begin{align}
\|G(u)-G(v)\|_{\mathcal L_2(\ell^2,  {}_\nu  H^2)}
&\lesssim
\|g(u)-g(v)\|_{L^2(\ell^2)}
+
\|\Delta(g(u)-g(v))\|_{L^2(\ell^2)}.
\label{eq:G-strong-H2-equivalence}
\end{align}
Further, by the Lipschitz continuity of $g$ in the $y$-variable, we have
\begin{equation}
\label{eq:G-strong-L2}
\|g(u)-g(v)\|_{L^2(\ell^2)}
\le
L\|u-v\|_{L^2}.
\end{equation}
It remains to estimate the term with the Laplacian. The chain rule gives
\begin{align}
\Delta g(u)
&=
\Delta_xg(u)
+\partial_yg(u)\Delta u
+2\sum_{m=1}^d
\partial_{x_my}^2g(u)\cdot \partial_mu
+\sum_{m=1}^d
\partial_{yy}^2g(u)
[\partial_mu,\partial_mu].
\label{eq:G-strong-chain-rule}
\end{align}
Here, for each component \(g_k\), we use the notation
\begin{align*}
     \partial_{yy}^2g_k(u)[\partial_m u,\partial_m u]
      &\coloneq                                 
     \sum_{j=1}^3\sum_{\ell=1}^3
     \frac{\partial^2 g_k(u)}{\partial y_j\partial y_\ell}                                              (\partial_m u_j)(\partial_m u_\ell)
     \intertext{and}
     \partial_{x_my}^2g_k(u)\cdot\partial_m u
     & \coloneq                                               
     \sum_{j=1}^3        
     \frac{\partial^2 g_k(u)}{\partial x_m\partial y_j}
     \partial_m u_j.  
\end{align*}     
Hence
\begin{align*}
\Delta g(u)-\Delta g(v)
&=
\Delta_xg(u)-\Delta_xg(v)
\\
&\quad
+\partial_yg(u)\Delta(u-v)
+\bigl[\partial_yg(u)-\partial_yg(v)\bigr]\Delta v
\\
&\quad
+2\sum_{m=1}^d
\partial_{x_my}^2g(u)\partial_m(u-v)
\\
&\quad
+2\sum_{m=1}^d
\bigl[
\partial_{x_my}^2g(u)-\partial_{x_my}^2g(v)
\bigr]\partial_mv
\\
&\quad
+\sum_{m=1}^d
\partial_{yy}^2g(u)
[\partial_m(u-v),\partial_mu]
\\
&\quad
+\sum_{m=1}^d
\partial_{yy}^2g(u)
[\partial_mv,\partial_m(u-v)]
\\
&\quad
+\sum_{m=1}^d
\bigl[
\partial_{yy}^2g(u)-\partial_{yy}^2g(v)
\bigr]
[\partial_mv,\partial_mv].
\end{align*}
By the Lipschitz continuity of $\Delta_xg$,
\begin{equation}
\label{eq:G-strong-pure-spatial}
\|\Delta_xg(u)-\Delta_xg(v)\|_{L^2(\ell^2)}
\lesssim \|u-v\|_{L^2}.
\end{equation}
Using the boundedness and Lipschitz continuity of $\partial_yg$, we have
\begin{equation}
\begin{aligned}
&\bigl\|
\partial_yg(u)\Delta(u-v)
+
[\partial_yg(u)-\partial_yg(v)]\Delta v
\bigr\|_{L^2(\ell^2)}
\\
&\qquad
\lesssim
\|\Delta(u-v)\|_{L^2}
+
\||u-v|\,|\Delta v|\|_{L^2}
\\
&\qquad
\lesssim \|u-v\|_{H^2}
+
\|u-v\|_{L^\infty}\|v\|_{H^2}.
\label{eq:G-strong-linear}
\end{aligned}
\end{equation}
Similarly, the assumptions on $\partial_{x_my}^2g$ yield
\begin{align}
&
2\sum_{m=1}^d \|
\partial_{x_my}^2g(u)\partial_m(u-v)\|_{L^2(\ell^2)}
+
2\sum_{m=1}^d \|
[\partial_{x_my}^2g(u)-\partial_{x_my}^2g(v)]
\partial_mv
\|_{L^2(\ell^2)}
\notag\\
&\qquad
\lesssim
\||\nabla(u-v)|\|_{L^2}
+
\||u-v|\,|\nabla v|\|_{L^2}
\notag\\
&\qquad
\lesssim
\|u-v\|_{H^2}
+
\|u-v\|_{L^\infty}\|v\|_{H^2}.
\label{eq:G-strong-mixed}
\end{align}
Finally, using the boundedness and Lipschitz continuity of
$\partial_{yy}^2g$, we obtain
\begin{equation}
\begin{aligned}
&
\sum_{m=1}^d \| 
\partial_{yy}^2g(u)
[\partial_m(u-v),\partial_mu]
\|_{L^2(\ell^2)}
\\
&\qquad
+
\sum_{m=1}^d \|
\partial_{yy}^2g(u)
[\partial_mv,\partial_m(u-v)]
\|_{L^2(\ell^2)}
\\
&\qquad
+
\sum_{m=1}^d \|
[\partial_{yy}^2g(u)-\partial_{yy}^2g(v)]
[\partial_mv,\partial_mv]
\|_{L^2(\ell^2)}
\\
& \qquad \lesssim \||\nabla (u-v)| |\nabla u|\|_{L^2} + \||\nabla v ||\nabla (u-v)|\|_{L^2} + \|(u-v) |\nabla v|^2\|_{L^2}
\\
&\qquad
\lesssim
\||\nabla(u-v)|\|_{L^4}
\bigl(
\||\nabla u|\|_{L^4}
+
\||\nabla v|\|_{L^4}
\bigr)
+
\|u-v\|_{L^\infty}
\||\nabla v|\|_{L^4}^2.
\label{eq:G-strong-quadratic}
\end{aligned}
\end{equation}
Hence, 
\eqref{eq:G-strong-pure-spatial}--\eqref{eq:G-strong-quadratic} and the Sobolev embeddings $
    H^2\hookrightarrow L^\infty$, $    H^2\hookrightarrow W^{1,4}
$ give 
\begin{align}
\|\Delta(g(u)-g(v))\|_{L^2(\ell^2)}
&\lesssim
(
1+\|u\|_{H^2}^2+\|v\|_{H^2}^2
)
\|u-v\|_{H^2}.
\label{eq:G-strong-Laplacian-Lipschitz}
\end{align}
Together with \eqref{eq:G-strong-H2-equivalence} and
\eqref{eq:G-strong-L2}, this means
\begin{equation}
\label{eq:G-strong-H2-Lipschitz}
\|G(u)-G(v)\|_{\mathcal L_2(\ell^2,  {}_\nu  H^2)}
\lesssim
(
1+\|u\|_{H^2}^2+\|v\|_{H^2}^2
)
\|u-v\|_{H^2}.
\end{equation}
Noting that $ \mathcal V_{\beta} \hookrightarrow H^2$ for any $\beta \ge 1/2$, we obtain that \eqref{ineq:Local_Lip_G} holds with $\rho=2$ and any $\beta \in (\tfrac  12,\tfrac 23)$. Moreover, the subcriticality condition \eqref{ineq:subcriticality condition} is satisfied with a strict inequality.
In particular, we may again take $\beta=7/12$ to conclude the proof.

\end{proof}

\subsubsection{Global well-posedness in the strong setting}\label{subsesction: GWP in strong}
In this section, we prove the global well-posedness result of  Theorem \ref{theorem:GWP_d=3_L2_time}. Similarly to the intermediate setting, we rely on the blow-up criterion \eqref{eq:blow up criterion in strong} of Proposition \ref{prop:LWP in strong} for which we prove suitable energy estimates. As before, in what follows, $d\in \{1,2,3\}$.

\begin{proposition}[$L^2(\Omega)$-energy estimates in $H^2$]\label{prop:energy estimate H2}
 Suppose that Assumption~\ref{assumptions on g strong setting} holds. Let $u_0\in L_{\mathcal F_0}^{28}(\Omega;H^1) \cap L_{\mathcal F_0}^{2}(\Omega; {}_\nu H^2)$ and let $(u,\sigma)$ be the maximal solution of equation \eqref{eq: LLB1} given by Proposition \ref{prop:LWP in strong}. Then for any $T>0$,
\begin{equation}\label{eq:strong-H2-estimate}
    \mathbb{E} \sup_{t \in [0, T \wedge \sigma)} \|u(t)\|_{H^2}^2+
\mathbb E\int_0^{T\wedge\sigma}
\|u(s)\|_{H^4}^2\,\rd s
 \le C_T \left(
1+\mathbb E\|u_0\|_{H^2}^{2}
+\bbE \|u_0\|_{H^1}^{28}
\right).
\end{equation}
\end{proposition}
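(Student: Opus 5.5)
We will apply the It\^o formula to $\|\Delta u\|_{L^2}^2$, in the strong-setting Gelfand triple ${}_\nu H^4\hookrightarrow {}_\nu H^2\hookrightarrow ({}_\nu H^4)^*$, and close the estimate using the $H^1$-energy bounds of Proposition~\ref{prop:energy estimate in H1} as a priori information. Note first that the maximal solution of Proposition~\ref{prop:LWP in strong} coincides, up to its lifetime, with the intermediate solution of Proposition~\ref{thm:LWP in intermediate}; this follows from uniqueness, since Assumption~\ref{assumptions on g strong setting} implies Assumption~\ref{ass:noise_intermediate}. Hence Proposition~\ref{prop:energy estimate in H1} applies on $[0,T\wedge\sigma)$. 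With $u_0\in L^{28}(\Omega;H^1)$ we may take $p=7$ there, which gives control of $\mathbb E\sup\|u\|_{H^1}^{14}$ and of $\mathbb E(\int\|\Heff(u)\|_{H^1}^2)^7$. By Lemma~\ref{lem:d4-endpoint-elliptic}, this also controls high moments of $\int_0^{T\wedge\sigma}\|u\|_{H^3}^2\,\rd s$.

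\textbf{Pathwise It\^o identity.} We localize with $\tau_{R,n}$ as in \eqref{eq:stop time tRn}, now using the $H^2$-norm. For $Y=\frac12\|\Delta u\|_{L^2}^2$, the drift pairing $\langle Au,\Delta^2u\rangle$ produces $\alpha_1\|\Delta^2u\|_{L^2}^2$ minus lower-order terms. By the argument of \eqref{eq:linear coercivity strong setting}, this is at least $c\|u\|_{H^4}^2-C\|u\|_{H^2}^2$. For the nonlinear terms we bound $|\langle F(u),\Delta^2 u\rangle|\le \|F(u)\|_{L^2}\|u\|_{H^4}$ and then apply Young's inequality. The expansions $\Delta(|u|^2u)$ and $\Delta u\times u$ give
\[
\|F(u)\|_{L^2}\lesssim \|u\|_{L^\infty}^2\|u\|_{H^2}+\|u\|_{L^\infty}\|\nabla u\|_{L^4}^2+\|u\|_{L^\infty}\|\Delta u\|_{L^2}.
\]
In $d\le3$ we then use Gagliardo--Nirenberg interpolation between $H^1$ and $H^4$, for instance $\|u\|_{L^\infty}\lesssim\|u\|_{H^1}^{5/6}\|u\|_{H^4}^{1/6}$ and $\|u\|_{H^2}\lesssim \|u\|_{H^1}^{2/3}\|u\|_{H^4}^{1/3}$. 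After this step every term has the form $\|u\|_{H^1}^{a}\|u\|_{H^4}^{b}$ with $b<2$, or alternatively is bounded by $\epsilon\|u\|_{H^4}^2+C\,\phi(s)\|u\|_{H^2}^2$. Here $\phi=1+\|u\|_{H^1}^{m}+\|u\|_{H^3}^2$ is a pathwise integrable weight, and the exponent $m$ is what the choice $28=4\cdot7$ is tuned to.

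The noise correction $\frac12\|\Delta G(u)\|_{L^2(\ell^2)}^2$ is handled through the chain rule \eqref{eq:G-strong-chain-rule}, where the compatibility condition \eqref{eq:g-Neumann-compatibility} ensures $G(u)\in{}_\nu H^2$. This gives the bound $C(1+\|u\|_{H^2}^2+\|\nabla u\|_{L^4}^4)$, and we estimate $\|\nabla u\|_{L^4}^4\lesssim\|u\|_{H^1}^2\|u\|_{H^3}^2$, which is again of the form $\phi\,(1+\|u\|_{H^2}^2)$. The martingale term $\sum_k\int\langle\Delta u,\Delta G_k(u)\rangle\,\rd W^k$ is treated by the Burkholder--Davis--Gundy inequality. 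Its quadratic variation is bounded by $\sup\|u\|_{H^2}^2\int(1+\|u\|_{H^2}^2+\|u\|_{H^1}^2\|u\|_{H^3}^2)\,\rd s$, so Young's inequality absorbs $\frac12\mathbb E\sup Y$ and leaves moments of the $H^1$/$H^3$ quantities.

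\textbf{Main obstacle: closing in expectation.} The weight $\phi$ is random and only has finite moments, not an almost sure bound, so a deterministic Gronwall argument after taking expectations fails. We will instead use a stochastic Gronwall lemma, e.g.\ the version in \cite{AgVe24a} or Geiss's inequality. Alternatively, we apply It\^o's formula to $e^{-\int_0^t C\phi}\,Y(t)$ to obtain a bound on $\mathbb E\sup e^{-\int\phi}\|u\|_{H^2}^2$. To upgrade this to the unweighted statement \eqref{eq:strong-H2-estimate}, we go back to the pathwise inequality and split it using Hölder's inequality: the $H^2$-dependence stays linear in $\mathbb E\|u_0\|_{H^2}^2$, while the high $H^1$-moments collect into $\mathbb E\|u_0\|_{H^1}^{28}$. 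This is the delicate step. The key requirement is to keep the $H^2$-dependence linear while putting all superlinear growth onto the $H^1$-quantities, whose moments are controlled by Proposition~\ref{prop:energy estimate in H1}. We then let $R\to\infty$ and $n\to\infty$ and conclude by Fatou's lemma.
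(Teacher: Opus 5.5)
Your set-up essentially matches the paper. You use It\^o's formula in the strong Gelfand triple, localize by $\tau_{R,n}$, apply BDG, and take $\mathbb E\sup_{t<T\wedge\sigma}\|u(t)\|_{H^1}^{14}\le C_T(1+\mathbb E\|u_0\|_{H^1}^{28})$ from Proposition~\ref{prop:energy estimate in H1} with $p=7$. The gap is in how you close the estimate. You let $\|u\|_{H^2}^2$ appear multiplied by the random weight $\phi=1+\|u\|_{H^1}^m+\|u\|_{H^3}^2$. You then plan either a stochastic Gronwall lemma, or It\^o's formula for $e^{-C\int_0^t\phi\,\rd s}\,Y(t)$ followed by a H\"older splitting.

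Undoing the weight uses $\sup_{t\le T}Y(t)\le e^{C\int_0^T\phi\,\rd s}\sup_{t\le T}e^{-C\int_0^t\phi\,\rd s}Y(t)$. Any H\"older splitting of this therefore needs an exponential moment $\mathbb E\,e^{C'\int_0^{T\wedge\sigma}\phi\,\rd s}<\infty$. It also needs a moment of $\|u_0\|_{H^2}$ of order strictly larger than $2$. Neither is available:
\begin{itemize}
\item Proposition~\ref{prop:energy estimate in H1} and Lemma~\ref{lem:d4-endpoint-elliptic} give only polynomial moments of $\sup\|u\|_{H^1}$ and $\int\|u\|_{H^3}^2\,\rd s$.
\item $u_0$ is only assumed to lie in $L^2(\Omega;{}_\nu H^2)$.
\end{itemize}
Stochastic Gronwall lemmas likewise either require such exponential integrability or give only tail bounds in probability. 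So this ``delicate step'' cannot produce \eqref{eq:strong-H2-estimate}, which is linear in $\mathbb E\|u_0\|_{H^2}^2$. The obstacle is self-inflicted: the random weight is never needed.

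The missing point is that nothing random should ever multiply $\|u\|_{H^2}^2$, and your own first alternative already achieves this. For the drift, interpolate each nonlinear term only between $H^1$ and $H^4$. This yields the products $\|u\|_{H^1}^{7/3}\|u\|_{H^4}^{5/3}$, $\|u\|_{H^1}^{3/2}\|u\|_{H^4}^{3/2}$ and $\|u\|_{H^1}^{3}\|u\|_{H^4}$. Young's inequality then gives $2\langle u,F(u)\rangle\le\varepsilon\|u\|_{H^4}^2+C_\varepsilon(1+\|u\|_{H^1}^{14})$, which is Lemma~\ref{lem:strong-drift-estimate}.

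For the noise, use $\|\nabla u\|_{L^4}^4\lesssim\|u\|_{H^1}^3\|u\|_{H^4}$. This gives $\|G(u)\|_{\mathcal L_2(\ell^2,{}_\nu H^2)}^2\le\varepsilon\|u\|_{H^4}^2+C_T(1+\|u\|_{H^2}^2+\|u\|_{H^1}^6)$, which is Lemma~\ref{lem:strong-noise-growth}. Your bound $\|u\|_{H^1}^2\|u\|_{H^3}^2$ would also work, but only as an additive forcing term; its expectation is controlled via Lemma~\ref{lem:d4-endpoint-elliptic} and H\"older.

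With these bounds, $\|u\|_{H^2}^2$ carries a deterministic constant and all superlinear growth is additive $H^1$-forcing. BDG with exponent one, plus the noise bound, absorbs the martingale. The ordinary Gronwall lemma for $t\mapsto\mathbb E\sup_{r\le t}\|u(r\wedge\tau_{R,n})\|_{H^2}^2$ then closes the argument.

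Finally, Assumption~\ref{assumptions on g strong setting} does not imply Assumption~\ref{ass:noise_intermediate}, since $g^{(0)}\in H^2(\ell^2)$ does not give $g^{(0)}\in W^{1,\infty}(\ell^2)$. To invoke Proposition~\ref{prop:energy estimate in H1} you need Remark~\ref{rem:intermediate-weaker-origin} together with the embedding $H^2(\ell^2)\hookrightarrow W^{1,4}(\ell^2)$.
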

\begin{remark}
    Since only the second-moment estimate is needed for the blow-up criterion \eqref{eq:blow up criterion in strong}, we state and prove
Proposition~\ref{prop:energy estimate H2} only for this case. The same
argument extends to arbitrary moments. More precisely, one can show that if 
$p\in[1,\infty)$ 
and 
$ 
    u_0\in L_{\mathcal F_0}^{28p}(\Omega;H^1)\cap L_{\mathcal F_0}^{2p}(\Omega;  {}_\nu  H^2),
$
then for every $T>0$,
\[
\mathbb E
    \sup_{t\in[0,T\wedge\sigma)}
    \|u(t)\|_{H^2}^{2p}
 +  
\mathbb E \left( \int_0^{T\wedge\sigma}
\|u(s)\|_{H^4}^2\,\rd s \right)^p
\le
C_{T,p}(
1+\mathbb E\|u_0\|_{H^2}^{2p}
+\bbE \|u_0\|_{H^1}^{28p}
).
\]
More generally, the assumption
$
   u_0\in L_{\mathcal F_0}^{28p}(\Omega;H^1)\cap L_{\mathcal F_0}^{2p}(\Omega;  {}_\nu  H^2)
$
may be replaced by the weaker condition
$
    \mathbb E\|u_0\|_{H^2}^{2p}
    +
    \mathbb E\mathscr E(u_0)^{7p}
    <\infty,
$
with $\mathscr E$ being the energy functional defined in
\eqref{eq:corrected-GL-energy intro}.
\end{remark}

Before presenting the proof of Proposition \ref{prop:energy estimate H2}, we collect some useful lemmata. 
\begin{lemma}
\label{lem:strong-drift-estimate}
For every $\varepsilon>0$, there exists $C_\varepsilon>0$ such that
\begin{equation}
\label{eq:strong-drift-estimate}
    2\langle v, F(v) \rangle_{ {}_\nu H^4, L^2}
    \le
    \varepsilon\|v\|_{H^4}^2
    +
    C_\varepsilon\bigl(1+\|v\|_{H^1}^{14}\bigr),
    \qquad v\in  {}_\nu H^4.
\end{equation}
\end{lemma}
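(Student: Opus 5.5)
The plan is to avoid any integration by parts and bound the pairing crudely by Cauchy--Schwarz. Whatever concrete form the duality $\langle\cdot,\cdot\rangle_{{}_\nu H^4,L^2}$ takes (in the proof of Proposition~\ref{prop:LWP in strong} it is $\int_{\mathscr D}(v+\Delta^2 v)\cdot F(v)\,\rd x$), it satisfies
\[
|\langle v,F(v)\rangle_{{}_\nu H^4,L^2}|\lesssim \|v\|_{H^4}\|F(v)\|_{L^2}.
\]
Young's inequality then gives
\[
2|\langle v,F(v)\rangle|\le \tfrac{\varepsilon}{2}\|v\|_{H^4}^2+C_\varepsilon\|F(v)\|_{L^2}^2.
\]
It therefore suffices to show, for every $\delta>0$,
\[
\|F(v)\|_{L^2}^2\le \delta\|v\|_{H^4}^2+C_\delta\bigl(1+\|v\|_{H^1}^{14}\bigr),
\]
and to choose $\delta$ small relative to $\varepsilon$.

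To bound $\|F(v)\|_{L^2}$, I expand $\Delta(|v|^2v)$ pointwise. This gives
\[
|F(v)|\lesssim |v|^2|\Delta v|+|v|\,|\nabla v|^2+|v|\,|\Delta v|+|v|^3 .
\]
By H\"older's inequality,
\[
\|F(v)\|_{L^2}\lesssim \|v\|_{L^\infty}^2\|\Delta v\|_{L^2}+\|v\|_{L^\infty}\|\nabla v\|_{L^4}^2+\|v\|_{L^\infty}\|\Delta v\|_{L^2}+\|v\|_{L^6}^3 .
\]
Each factor is then estimated by Gagliardo--Nirenberg interpolation between $H^1$ and $H^4$ on the bounded smooth domain, in the worst case $d=3$. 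The lower-order $L^2$ contributions produced by the domain version of these inequalities are harmless, since $\|v\|_{L^2}\le\|v\|_{H^1}$ and they are in any case absorbed by the constant term $1$. Using $[H^1,H^4]_\theta=H^{1+3\theta}$, the relevant exponents are as follows.

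\begin{enumerate}
\item $\|v\|_{L^\infty}\lesssim \|v\|_{H^1}^{5/6}\|v\|_{H^4}^{1/6}$, using $L^\infty$ at the level $H^{3/2}$. This is the Agmon/Gagliardo--Nirenberg endpoint, so one may alternatively use $H^{3/2+}$ and still close the argument.
\item $\|\Delta v\|_{L^2}\lesssim\|v\|_{H^1}^{2/3}\|v\|_{H^4}^{1/3}$.
\item $\|\nabla v\|_{L^4}\lesssim\|v\|_{H^1}^{3/4}\|v\|_{H^4}^{1/4}$, using $H^{7/4}\hookrightarrow W^{1,4}$.
\item $\|v\|_{L^6}\lesssim\|v\|_{H^1}$.
\end{enumerate}

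For the worst term $|v|^2\Delta v$, these exponents give
\[
\|v\|_{H^4}^{2/3}\|v\|_{H^1}^{7/3},
\]
and the same bound holds for $|v||\nabla v|^2$. Squaring yields $\|v\|_{H^4}^{4/3}\|v\|_{H^1}^{14/3}$, and Young's inequality with exponents $3/2$ and $3$ gives $\delta\|v\|_{H^4}^2+C_\delta\|v\|_{H^1}^{14}$. This is exactly where the power $14$ comes from. The remaining terms are milder. The cross-product term $|v||\Delta v|$ (coming from $\Delta v\times v$) gives $\|v\|_{H^4}\|v\|_{H^1}^3$ after squaring, which is bounded by $\delta\|v\|_{H^4}^2+C\|v\|_{H^1}^6$. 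The cubic term $|v|^3$ gives $\|v\|_{H^1}^6$. Both powers $6$ are bounded by $1+\|v\|_{H^1}^{14}$.

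The main obstacle, or at least the step requiring care, is the bookkeeping of interpolation exponents. The total power of $\|v\|_{H^4}$ in $\|F(v)\|_{L^2}$ must be strictly below $1$, and here it equals $2/3$; the resulting power of $\|v\|_{H^1}$ must come out as $14$. A second point is the justification of the endpoint $L^\infty$ Gagliardo--Nirenberg inequality on a Neumann domain. I would handle it either through the elliptic norm equivalence and complex interpolation of the ${}_\nu H^{s}$ scale recalled in Section~\ref{Sec: Preliminaries}, or by using $H^{3/2+\eta}$ with small $\eta>0$. The latter still leaves a total $H^4$ exponent below $1$ and, after Young, a power of $\|v\|_{H^1}$ that can be adjusted to $14$.
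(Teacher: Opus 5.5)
Your reduction and exponent count match the paper's proof. The paper uses exactly your bounds for $\|v\|_{L^\infty}$, $\|\nabla v\|_{L^4}$, $\|v\|_{H^2}$ and $H^1\hookrightarrow L^6$, arrives at $\|v\|_{H^1}^{7/3}\|v\|_{H^4}^{5/3}$, and applies Young with exponents $6/5$ and $6$; this is equivalent to your squaring followed by Young with $3/2$ and $3$. The one structural difference is harmless. You bound the whole pairing by $\|v\|_{H^4}\|F(v)\|_{L^2}$, whereas the paper writes it as $\int_{\mathscr D}F(v)\cdot(v+\Delta^2v)\,\rd x$ and shows by integration by parts that the part tested against $v$ is non-positive. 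Your cruder route is legitimate, since those lower-order contributions satisfy the same bound.

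The step that fails as written is the justification of the endpoint inequality $\|v\|_{L^\infty}\lesssim\|v\|_{H^1}^{5/6}\|v\|_{H^4}^{1/6}$ in $d=3$, on which the exponent $14$ depends.

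\begin{itemize}
\item Complex interpolation of the ${}_\nu H^s$ scale only controls $\|v\|_{H^{3/2}}$. In three dimensions $H^{3/2}$ does not embed into $L^\infty$, so ``$L^\infty$ at the level $H^{3/2}$'' is false as an embedding.
\item The fallback through $H^{3/2+\eta}$ does not prove the lemma as stated. It gives $\|v\|_{L^\infty}\lesssim\|v\|_{H^1}^{5/6-\eta/3}\|v\|_{H^4}^{1/6+\eta/3}$, so the worst term becomes $\|v\|_{H^1}^{7/3-2\eta/3}\|v\|_{H^4}^{5/3+2\eta/3}$. Young then produces $\|v\|_{H^1}^{(14-4\eta)/(1-2\eta)}$, a power strictly larger than $14$, which cannot be bounded by $C(1+\|v\|_{H^1}^{14})$. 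Since $14$ is exactly the exponent dictated by scaling, there is no slack to absorb any loss in the embedding.
\end{itemize}

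You need a genuinely sharp endpoint. One option is real interpolation with fine index $1$: $(H^1,H^4)_{1/6,1}=B^{3/2}_{2,1}\hookrightarrow L^\infty$, together with $\|v\|_{(X_0,X_1)_{\theta,1}}\lesssim\|v\|_{X_0}^{1-\theta}\|v\|_{X_1}^{\theta}$. This is what the paper uses. The other is Agmon's inequality $\|v\|_{L^\infty}\lesssim\|v\|_{H^1}^{1/2}\|v\|_{H^2}^{1/2}$, which you name, combined with your bound $\|v\|_{H^2}\lesssim\|v\|_{H^1}^{2/3}\|v\|_{H^4}^{1/3}$. No Neumann condition is involved, since ${}_\nu H^4\subset H^4(\mathscr D)$. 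With either replacement your argument is complete.
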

\begin{proof}

We have
\begin{equation}
\begin{aligned}
\langle v, F(v) \rangle_{ {}_\nu H^4, L^2}
&=
\gamma_1\int_{\mathscr D}
\Delta(|v|^2v)\cdot(v+\Delta^2v)\,\rd x
+\gamma_2\int_{\mathscr D}
(\Delta v\times v)\cdot(v+\Delta^2v)\,\rd x
\\
&\quad
-\gamma_3\int_{\mathscr D}
|v|^2v\cdot(v+\Delta^2v)\,\rd x,
\label{eq:strong-F-pairing-expanded}
\end{aligned}
\end{equation}
where $\gamma_1,\gamma_2,\gamma_3>0$. 
We first consider the lower-order terms. Since $\partial_nv=0$, we have
$
    \partial_n(|v|^2v)
    =
    2(v\cdot\partial_nv)v+|v|^2\partial_nv
    =0$
    on $\partial\mathscr D.$ 
Therefore, integration by parts gives
\begin{align}
\int_{\mathscr D}\Delta(|v|^2v)\cdot v\,\rd x
&=
-\int_{\mathscr D}\nabla(|v|^2v):\nabla v\,\rd x
\notag\\
&=
-\int_{\mathscr D}
\left(
    |v|^2|\nabla v|^2
    +
    2\sum_{m=1}^d(v\cdot\partial_mv)^2
\right)\,\rd x
\le0.
\label{eq:strong-F1-lower}
\end{align}
Moreover,
\begin{equation}
\label{eq:strong-F2-lower}
    \int_{\mathscr D}(\Delta v\times v)\cdot v\,\rd x=0,
\end{equation}
and
\begin{equation}
\label{eq:strong-F3-lower}
    -\int_{\mathscr D}|v|^2v\cdot v\,\rd x
    =
    -\|v\|_{L^4}^4
    \le0.
\end{equation}
Hence, the lower order terms are non-positive.

It remains to estimate the terms containing $\Delta^2v$.
Interpolation and the sharp embedding 
$
( {}_\nu H^1, {}_\nu H^4)_{1/6,1}
=
 {}_\nu B^{3/2}_{2,1}
\hookrightarrow L^\infty
$ imply
\begin{align}
\label{eq:strong-interpolation-Linfty}
\|v\|_{L^\infty}
&\lesssim
\|v\|_{H^1}^{5/6}
\|v\|_{H^4}^{1/6}.
\intertext{Moreover, the embedding $  {}_\nu H^{7/4} \hookrightarrow {}_\nu  H^{1,4}$ and interpolation between $ {}_\nu H^1$ and $ {}_\nu H^4$ yield
\label{eq:strong-interpolation-W14}}
\|\nabla v\|_{L^4}
&\lesssim
\|v\|_{H^{7/4}}
\lesssim
\|v\|_{H^1}^{3/4}
\|v\|_{H^4}^{1/4},
\\
\label{eq:strong-interpolation-H2}
\|v\|_{H^2}
&\lesssim
\|v\|_{H^1}^{2/3}
\|v\|_{H^4}^{1/3}.
\end{align}

For the first nonlinearity, a direct calculation gives
\[
\Delta(|v|^2v)
=
|v|^2\Delta v
+
2(v\cdot\Delta v)v
+
4\sum_{m=1}^d(v\cdot\partial_mv)\partial_mv
+
2|\nabla v|^2v
\]
and thus
$$
|\Delta(|v|^2v)|
\lesssim
|v|^2|\Delta v|
+
|v||\nabla v|^2.
$$
Hence, together with H\"older's inequality, this implies
\begin{align}
\Bigl|
\int_{\mathscr D}
\Delta(|v|^2v)\cdot\Delta^2v\,\rd x
\Bigr|
\lesssim
\|v\|_{L^\infty}^2
\|\Delta v\|_{L^2}
\|\Delta^2v\|_{L^2}
+
\|v\|_{L^\infty}
\|\nabla v\|_{L^4}^2
\|\Delta^2v\|_{L^2}.
\label{eq:strong-F1-high-preliminary}
\end{align}
By
\eqref{eq:strong-interpolation-Linfty} and
\eqref{eq:strong-interpolation-H2}, we have
\begin{align*}
\|v\|_{L^\infty}^2
\|\Delta v\|_{L^2}
\|\Delta^2v\|_{L^2}
&\lesssim
\|v\|_{H^1}^{5/3}
\|v\|_{H^4}^{1/3}
\,
\|v\|_{H^1}^{2/3}
\|v\|_{H^4}^{1/3}
\,
\|v\|_{H^4}
\\
&=
\|v\|_{H^1}^{7/3}
\|v\|_{H^4}^{5/3},
\end{align*}
while by
\eqref{eq:strong-interpolation-Linfty} and
\eqref{eq:strong-interpolation-W14},
\begin{align*}
\|v\|_{L^\infty}
\|\nabla v\|_{L^4}^2
\|\Delta^2v\|_{L^2}
&\lesssim
\|v\|_{H^1}^{5/6}
\|v\|_{H^4}^{1/6}
\,
\|v\|_{H^1}^{3/2}
\|v\|_{H^4}^{1/2}
\,
\|v\|_{H^4}
\\
&=
\|v\|_{H^1}^{7/3}
\|v\|_{H^4}^{5/3}.
\end{align*}
Young's inequality with conjugate exponents $6/5$ and $6$ gives
$$
\|v\|_{H^1}^{7/3}
\|v\|_{H^4}^{5/3}
\le
\varepsilon\|v\|_{H^4}^2
+
C_\varepsilon\|v\|_{H^1}^{14}.
$$
Hence,
\begin{equation}
\label{eq:strong-F1-high}
\Bigl|
\int_{\mathscr D}
\Delta(|v|^2v)\cdot\Delta^2v\,\rd x
\Bigr|
\le
\varepsilon\|v\|_{H^4}^2
+
C_\varepsilon\|v\|_{H^1}^{14}.
\end{equation}

For the cross-product term, H\"older's inequality, \eqref{eq:strong-interpolation-Linfty}, and \eqref{eq:strong-interpolation-H2} give
\begin{equation}
\Bigl|
\int_{\mathscr D}
(\Delta v\times v)\cdot\Delta^2v\,\rd x
\Bigr|
\le
\|v\|_{L^\infty}
\|\Delta v\|_{L^2}
\|\Delta^2v\|_{L^2}
\lesssim
\|v\|_{H^1}^{3/2}
\|v\|_{H^4}^{3/2},
\end{equation}
and after using Young's inequality with conjugate exponents $4/3$ and $4$ there follows
\begin{equation}
\label{eq:strong-F2-high}
\Bigl|
\int_{\mathscr D}
(\Delta v\times v)\cdot\Delta^2v\,\rd x
\Bigr|
\le
\varepsilon\|v\|_{H^4}^2
+
C_\varepsilon\|v\|_{H^1}^{6}.
\end{equation}

Finally, by the embedding $H^1\hookrightarrow L^6$ and Young's inequality,
\begin{align}
\Bigl|
\int_{\mathscr D}
|v|^2v\cdot\Delta^2v\,\rd x
\Bigr|
&\le
\|v\|_{L^6}^3
\|\Delta^2v\|_{L^2}
\lesssim
\|v\|_{H^1}^3
\|v\|_{H^4}
\le
\varepsilon\|v\|_{H^4}^2
+
C_\varepsilon\|v\|_{H^1}^{6}.
\label{eq:strong-F3-high}
\end{align}

Combining the preceding estimates and adjusting $\varepsilon>0$ to
absorb the coefficients $\gamma_1,\gamma_2,\gamma_3$ results in
$$
2\langle v,F(v)\rangle_{ {}_\nu H^4,L^2}
\le
\varepsilon\|v\|_{H^4}^2
+
C_\varepsilon
(1+\|v\|_{H^1}^{14}).
$$
\end{proof}

\begin{lemma}
\label{lem:strong-noise-growth}
Suppose that Assumption~\ref{assumptions on g strong setting} holds. Then, for every
$T>0$ and $\varepsilon>0$, there exists
$C_{T,\varepsilon}>0$ such that, for a.e.\ $(t,\omega)\in[0,T]\times\Omega$,
\begin{equation}
\label{eq:strong-noise-growth}
\|G(t,\omega,v)\|_{\mathcal L_2(\ell^2, {}_\nu H^2)}^2
\le
\varepsilon\|v\|_{H^4}^2
+
C_{T,\varepsilon}
(
1+\|v\|_{H^2}^2+\|v\|_{H^1}^6
),
\qquad v\in  {}_\nu H^4.
\end{equation}
\end{lemma}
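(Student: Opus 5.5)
The plan is to bound the $\mathcal L_2(\ell^2,{}_\nu H^2)$-norm of $G(v)$ by the $L^2(\ell^2)$-norm of $g(\cdot,v)$ plus that of $\Delta g(\cdot,v)$. This uses the elliptic norm equivalence on ${}_\nu H^2$, exactly as in \eqref{eq:G-strong-H2-equivalence}. The compatibility condition \eqref{eq:g-Neumann-compatibility} together with $\partial_n v=0$ guarantees that $G(v)\in\mathcal L_2(\ell^2,{}_\nu H^2)$, as in the proof of Proposition \ref{prop:LWP in strong}. The zeroth-order term is immediate: by \eqref{eq:g-strong-Lip} and \eqref{eq:g0-H2-strong} we have $\|g(v)\|_{L^2(\ell^2)}\le K_T+L\|v\|_{L^2}$.

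For the Laplacian I will use the chain rule \eqref{eq:G-strong-chain-rule} and bound each of the four terms pointwise in $\ell^2$.
\begin{itemize}
\item First, $\|\Delta_x g(\cdot,v)\|_{\ell^2}\le \|\Delta_x g(\cdot,0)\|_{\ell^2}+L|v|$ by \eqref{eq:gxx-strong-Lip}, so its $L^2$-norm is at most $C_T(1+\|v\|_{L^2})$.
\item Second, $\|\partial_y g(v)\Delta v\|_{L^2(\ell^2)}\le L\|v\|_{H^2}$ by \eqref{eq:gy-strong-bdd}.
\item Third, the mixed term is bounded by $2L\|\nabla v\|_{L^2}$ by \eqref{eq:gxy-strong-bdd}.
\item The only genuinely nonlinear contribution is the fourth term, $\partial^2_{yy}g(v)[\nabla v,\nabla v]$. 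By \eqref{eq:gyy-strong-bdd} it is bounded by $L\|\nabla v\|_{L^4}^2$ in $L^2$.
\end{itemize}
Squaring, all terms except the last give $C_T(1+\|v\|_{H^2}^2)$.

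It remains to control $\|\nabla v\|_{L^4}^4$. For this I reuse the interpolation bound \eqref{eq:strong-interpolation-W14} from the proof of Lemma \ref{lem:strong-drift-estimate}, namely $\|\nabla v\|_{L^4}\lesssim \|v\|_{H^1}^{3/4}\|v\|_{H^4}^{1/4}$, which gives $\|\nabla v\|_{L^4}^4\lesssim \|v\|_{H^1}^{3}\|v\|_{H^4}$. Young's inequality with exponents $2,2$ then gives $\varepsilon\|v\|_{H^4}^2+C_\varepsilon\|v\|_{H^1}^6$. This is where the exponent $6$ in \eqref{eq:strong-noise-growth} comes from.

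Summing the bounds yields \eqref{eq:strong-noise-growth}. The main obstacle, and the only delicate point, is this quadratic gradient term: one must interpolate between $H^1$ and $H^4$, rather than using $H^2\hookrightarrow W^{1,4}$, so that the $H^2$-norm enters only quadratically. Otherwise one would get $\|v\|_{H^2}^4$, which is useless for the Gronwall argument in Proposition \ref{prop:energy estimate H2}. One also has to check that the ${}_\nu H^1$--${}_\nu H^4$ interpolation and the embedding ${}_\nu H^{7/4}\hookrightarrow H^{1,4}$ (valid for $d\le3$) are used consistently with the Neumann scale.
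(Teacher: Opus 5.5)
Your proposal is correct and follows essentially the same route as the paper: Neumann compatibility plus the elliptic norm equivalence on ${}_\nu H^2$, the chain rule \eqref{eq:G-strong-chain-rule} giving $\|\Delta g(v)\|_{L^2(\ell^2)}\lesssim 1+\|v\|_{H^2}+\|\nabla v\|_{L^4}^2$, then $H^{7/4}\hookrightarrow W^{1,4}$ and interpolation between $H^1$ and $H^4$ so that $\|\nabla v\|_{L^4}^4\lesssim\|v\|_{H^1}^3\|v\|_{H^4}$, followed by Young's inequality. Your explanation of why the $H^1$--$H^4$ interpolation is needed, rather than $H^2\hookrightarrow W^{1,4}$, to keep the estimate usable in the Gronwall argument is also accurate.
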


\begin{proof}
For notational brevity, we suppress the dependence on $(t,\omega)$ and
write $g(v)=g(t,\omega,\cdot,v(\cdot))$. All constants below are uniform in $(t,\omega)$.
By the Neumann compatibility condition on $G$, we have $
    G(t,\omega,v)\in\mathcal L_2(\ell^2,  {}_\nu  H^2).
$
Using the elliptic norm equivalence on $  {}_\nu  H^2$, we obtain
\begin{equation}
\label{eq:strong-noise-growth-elliptic-short}
\|G(t,\omega,v)\|_{\mathcal L_2(\ell^2, {}_\nu H^2)}^2
\lesssim
\|g(v)\|_{L^2(\ell^2)}^2
+
\|\Delta g(v)\|_{L^2(\ell^2)}^2.
\end{equation}
By the Lipschitz continuity of
$g$ in the $y$-variable and \eqref{eq:g0-H2-strong}, 
\begin{equation}
\label{eq:strong-noise-growth-L2-short}
    \|g(v)\|_{L^2(\ell^2)}^2
    \le C_T(1+\|v\|_{L^2}^2).
\end{equation}
We now use the chain-rule identity \eqref{eq:G-strong-chain-rule}, which we recall:
\begin{align*}
\Delta g(v)
&=
\Delta_xg(v)
+\partial_yg(v)\Delta v
+2\sum_{m=1}^d
\partial_{x_my}^2g(v)\cdot\partial_mv
+\sum_{m=1}^d
\partial_{yy}^2g(v)
[\partial_mv,\partial_mv].
\end{align*}
The bound \eqref{eq:g0-H2-strong} in Assumption~\ref{assumptions on g strong setting},
together with
\[
    \|\Delta_xg(v)\|_{L^2(\ell^2)}
    \le
    \|\Delta_xg(0)\|_{L^2(\ell^2)}
    +
    C\|v\|_{L^2} \lesssim 1 + \|v\|_{L^2}
\]
therefore give
\[
\|\Delta g(v)\|_{L^2(\ell^2)}
\lesssim 
1+\|v\|_{L^2}+\|\Delta v\|_{L^2}+ \||\nabla v|\|_{L^2} +\||\nabla v|\|_{L^4}^2 \lesssim 1+\|v\|_{H^2}+\||\nabla v|\|_{L^4}^2 .
\]
Consequently,
\begin{equation}
\label{eq:strong-noise-growth-pre-interpolation-short}
\|G(t,\omega,v)\|_{\mathcal L_2(\ell^2, {}_\nu H^2)}^2
\le
C_T(
1+\|v\|_{ {}_\nu H^2}^2+\|\nabla v\|_{L^4}^4
).
\end{equation}
By the Sobolev embedding  $
    H^{7/4}\hookrightarrow W^{1,4}
$ and interpolating between $H^1$ and $H^4$, we have 
$
    \|v\|_{H^{7/4}}
    \lesssim
    \|v\|_{H^1}^{3/4}
    \|v\|_{H^4}^{1/4}.
$
Hence,
\[
\|\nabla v\|_{L^4}^4
\lesssim
\|v\|_{H^{7/4}}^4
\lesssim
\|v\|_{H^1}^3\|v\|_{H^4}.
\]
By Young's inequality, for every $\varepsilon>0$,
\[
    \|v\|_{H^1}^3\|v\|_{H^4}
    \le
    \varepsilon\|v\|_{H^4}^2
    +
    C_{\varepsilon}\|v\|_{H^1}^6.
\]
Substituting this into
\eqref{eq:strong-noise-growth-pre-interpolation-short} yields \eqref{eq:strong-noise-growth}.
\end{proof}

We are now ready to present the proof of Proposition~\ref{prop:energy estimate H2}.
\begin{proof}[Proof of Proposition~\ref{prop:energy estimate H2}]
Let $(\sigma_n)_{n\ge1}$ be a localizing sequence for $(u,\sigma)$. For
$R>0$ and $n\ge1$, set
\begin{equation*}
\tau_{R,n}
 \coloneq 
T\wedge\sigma_n\wedge
\inf\left\{
0\leq t<\sigma_n:
\|u(t)\|_{H^2}\geq R
\right\}.
\end{equation*}
Applying the variational It\^o formula in the Gelfand triple
$
{}_\nu H^4 \hookrightarrow {}_\nu H^2 \hookrightarrow L^2
$ 
gives
\begin{equation} 
\begin{aligned}
\|u(t\wedge\tau_{R,n})\|_{H^2}^2
&=
\|u_0\|_{H^2}^2
-2\int_0^{t\wedge\tau_{R,n}}
\langle u,Au\rangle_{ {}_\nu H^4,L^2}\,\rd s
\\
&\quad
+2\int_0^{t\wedge\tau_{R,n}}
\langle u, F(u) \rangle_{ {}_\nu H^4,L^2}\,\rd s
\\
&\quad
+\int_0^{t\wedge\tau_{R,n}}
\|G(s,u)\|_{\mathcal L_2(\ell^2, {}_\nu H^2)}^2\,\rd s
+M_{R,n}(t),
\label{eq:strong-Ito-H2}
\end{aligned}
\end{equation}
where
\[
M_{R,n}(t)
=
2\sum_{k\ge1}
\int_0^{t\wedge\tau_{R,n}}
(u(s),G_k(s,u(s)))_{ {}_\nu H^2}\,\rd W_s^k.
\]
By the linear coercivity estimate \eqref{eq:linear coercivity strong setting},
\[
-2\langle u,Au\rangle_{{}_\nu H^4,L^2}
\le
-2\theta\|u\|_{H^4}^2
+
2M\|u\|_{H^2}^2,
\]
and using Lemma~\ref{lem:strong-drift-estimate} and
Lemma~\ref{lem:strong-noise-growth}, with $\varepsilon>0$ chosen
sufficiently small, we obtain
\begin{equation}
\begin{aligned}
&\|u(t\wedge\tau_{R,n})\|_{H^2}^2
+
c\int_0^{t\wedge\tau_{R,n}}
\|u(s)\|_{H^4}^2\,\rd s
\\
&\qquad\le
\|u_0\|_{H^2}^2
+
C_T\int_0^{t\wedge\tau_{R,n}}
\left(
1+\|u(s)\|_{H^2}^2+\|u(s)\|_{H^1}^{14}
\right)\,\rd s
+
M_{R,n}(t).
\label{eq:strong-H2-pathwise}
\end{aligned}
\end{equation}
Set now
\[
X_{R,n}(t)
 \coloneq 
\sup_{r\in[0,t]}
\|u(r\wedge\tau_{R,n})\|_{H^2}^2.
\]
Taking the supremum in \eqref{eq:strong-H2-pathwise} and then expectations
gives
\begin{align}
&\mathbb EX_{R,n}(t)
+
c\mathbb E\int_0^{t\wedge\tau_{R,n}}
\|u(s)\|_{H^4}^2\,\rd s
\notag\\
&\qquad\le
\mathbb E\|u_0\|_{H^2}^2
+
C_T\int_0^t
\big(
1+\mathbb EX_{R,n}(s)
+\mathbb E\sup_{\rho< s\wedge\sigma}
\|u(\rho)\|_{H^1}^{14}
\big)\,\rd s
+\mathbb E\sup_{r\le t}|M_{R,n}(r)|.
\label{eq:strong-H2-before-BDG}
\end{align}
By the Burkholder--Davis--Gundy inequality,
\begin{align*}
\mathbb E\sup_{r\le t}|M_{R,n}(r)|
&\le
C\mathbb E
\Big(
\int_0^{t\wedge\tau_{R,n}}
\sum_{k\ge1}
|(u,G_k(s,u))_{ {}_\nu H^2}|^2\,\rd s
\Big)^{1/2}
\\
&\le
C\mathbb E
\Big[
X_{R,n}(t)^{1/2}
\Big(
\int_0^{t\wedge\tau_{R,n}}
\|G(s,u)\|_{\mathcal L_2(\ell^2, {}_\nu H^2)}^2\,\rd s
\Big)^{1/2}
\Big]
\\
&\le
\frac14\mathbb EX_{R,n}(t)
+
C\mathbb E\int_0^{t\wedge\tau_{R,n}}
\|G(s,u)\|_{\mathcal L_2(\ell^2, {}_\nu H^2)}^2\,\rd s,
\end{align*}
and applying Lemma~\ref{lem:strong-noise-growth} once more, and choosing its
$\varepsilon$ sufficiently small, yields
\begin{align}
\mathbb E\sup_{r\le t}|M_{R,n}(r)|
&\le
\frac14\mathbb EX_{R,n}(t)
+
\frac c2
\mathbb E\int_0^{t\wedge\tau_{R,n}}
\|u(s)\|_{H^4}^2\,\rd s
\notag\\
&\quad
+
C_T\int_0^t
\big(
1+\mathbb EX_{R,n}(s)
+\mathbb E\sup_{\rho < s\wedge\sigma}
\|u(\rho)\|_{H^1}^{14}
\big)\,\rd s.
\label{eq:strong-H2-BDG}
\end{align}
Then substituting \eqref{eq:strong-H2-BDG} into
\eqref{eq:strong-H2-before-BDG} and absorbing the first two terms gives
\begin{equation}
\begin{aligned}
&\mathbb EX_{R,n}(t)
+
c\mathbb E\int_0^{t\wedge\tau_{R,n}}
\|u(s)\|_{H^4}^2\,\rd s
\\
& \quad \le
C\mathbb E\|u_0\|_{H^2}^2
+
C_T\int_0^t\mathbb EX_{R,n}(s)\,\rd s
+
C_T(
1+
\mathbb E\sup_{s\in[0,T\wedge\sigma)}
\|u(s)\|_{H^1}^{14}
).
\label{eq:strong-H2-Gronwall}
\end{aligned}
\end{equation}
We now apply the $H^1$-energy estimate of Proposition~\ref{prop:energy estimate in H1}. This is justified by Remark~\ref{rem:intermediate-weaker-origin},
since \eqref{eq:g0-H2-strong} and the embedding
$H^2(\ell^2)\hookrightarrow W^{1,4}(\ell^2)$ imply ~\eqref{eq:intermediate-weaker-origin}. Therefore, Proposition~\ref{prop:energy estimate in H1} with $p=7$ yields
\[
\mathbb E\sup_{s\in[0,T\wedge\sigma)}
\|u(s)\|_{H^1}^{14}
\le
C_{T} (1+\bbE \|u_0\|_{H^1}^{28}),
\]
and consequently,
\begin{align*}
&\mathbb EX_{R,n}(t)
+
c\mathbb E\int_0^{t\wedge\tau_{R,n}}
\|u(s)\|_{ {}_\nu H^4}^2\,\rd s
\\
&\qquad\le
C_T\Big(
1+\mathbb E\|u_0\|_{ {}_\nu H^2}^2
+\bbE \|u_0\|_{H^1}^{28}
\Big)
+
C_T\int_0^t\mathbb EX_{R,n}(s)\,\rd s.
\end{align*}
Gronwall's lemma now gives
\begin{align*}
\mathbb E\sup_{t\in[0,T]}
\|u(t\wedge\tau_{R,n})\|_{H^2}^2
+
\mathbb E\int_0^{T\wedge\tau_{R,n}}
\|u(s)\|_{H^4}^2\,\rd s
\le
C_T(
1+\mathbb E\|u_0\|_{H^2}^2
+\bbE \|u_0\|_{H^1}^{28}
),
\label{eq:strong-H2-stopped}
\end{align*}
and letting first $R\to\infty$ and then $n\to\infty$, and using Fatou's
lemma, proves \eqref{eq:strong-H2-estimate}.

\end{proof}

\begin{proof}[Proof of Theorem \ref{theorem:GWP_d=3_L2_time}]
 By a localization argument (cf.\,\cite[Remark 5.3]{AgVe25}), we may assume that $u_0 \in L^\infty_{\mathcal F_0}(\Omega; {}_\nu H^2)$. Hence, global well-posedness follows from Proposition \ref{prop:energy estimate H2} and 
 the blow-up criterion \eqref{eq: blow-up-criterion-subcritical}. Moreover, the energy estimate \eqref{eq:strong-main-energy-estimate} follows from \eqref{eq:strong-H2-estimate}. 
    
\end{proof}

\begin{proof}[Proof of Corollary \ref{Coroll:GWP_strong}]
     The proof is analogous to the proof of Corollary \ref{Coroll:GWP_intermediate}, this time with the energy estimate \eqref{eq:strong-main-energy-estimate}.
    
\end{proof}

\subsection{Proof of well-posedness with rough initial data} \label{subsec:critical-LpLq-full-range}
In this section, we prove the global well-posedness result stated in Theorem \ref{thm:rough-final-global}. Throughout the section, 
$$X_0^{s,q} =  {}_\nu H^{-s,q},\qquad X_1^{s,q}= {}_\nu H^{4-s,q}.$$

\subsubsection{Local well-posedness}
We begin by proving local well posedness for initial data in the critical Besov space $B^{d/q-1}_{q,p}$. 
Applying the local theory of \cite{AgVe25} with our nonlinear
estimates restricts the admissible range to $q<2d$, owing to the
subcriticality condition for the nonlinearity $F_2$. To treat the
full range $q\in [2,\infty)$, we instead use the \enquote{shifted} local well-posedness theory of
\cite[Section~4.1]{AGV26shift} (see also \cite{BGV26shift}), which exploits estimates of the
nonlinearities from $X_\beta^{s,q}$ to $X_a^{s,q}$ with $a>0$.
For a nonlinearity with
growth exponent $\rho>0$, this replaces the sub-criticality condition
\[
\frac{1+\kappa}{p}
\le \frac{(1+\rho)(1-\beta)}{\rho}
\quad\text{by}\quad
\frac{1+\kappa}{p}
\le \frac{(1+\rho)(1-\beta)+a}{\rho}.
\]
This additional spatial
regularity ensures that the nonlinearity $F_2$ remains strictly subcritical. See Remark \ref{remark:role of shift a} for more details.

We begin by establishing general local Lipschitz estimates for $F$ and $G$.

\begin{proposition}[Lipschitz estimates for $F$]
\label{prop:rough-final-Lipschitz}
Let $d\in\{1,2,3\}$, $q\in[2,\infty)$, and
$
 3-\frac dq\le s<3.
$
Choose $a\ge0$ such that
\begin{equation}\label{eq:final-cubic-range}
 0\le 2-s+4a<\min\left\{1,\frac dq\right\}.
\end{equation}
Set
\begin{equation}\label{eq:final-spatial-exponents}
 t_1=\frac{2+2d/q-s+4a}{3},\qquad
 t_2=\begin{cases}
       (1+d/q)/2,&s<2,\\[1mm]
       1/2+d/(5q),&s\ge2,
     \end{cases}
\end{equation}
Then,
\begin{align}
 \|F_j(u)-F_j(v)\|_{H^{4a-s,q}}
 &\lesssim(\|u\|_{H^{t_1,q}}^2+\|v\|_{H^{t_1,q}}^2)
                 \|u-v\|_{H^{t_1,q}},\quad j\in\{1,3\},
                 \label{eq:final-cubics}\\
 \|F_2(u)-F_2(v)\|_{H^{-(s\wedge 2),q}}
 &\lesssim(\|u\|_{H^{t_2,q}}+\|v\|_{H^{t_2,q}})
                 \|u-v\|_{H^{t_2,q}}.\label{eq:final-F2}
\end{align}
\end{proposition}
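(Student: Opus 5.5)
We split according to the three nonlinearities. For the cubic terms set $\sigma\coloneq 2-s+4a$, so \eqref{eq:final-cubic-range} says $0\le\sigma<\min\{1,d/q\}$. We then reduce \eqref{eq:final-cubics} to the product estimate
\[
 \bigl\||u|^2u-|v|^2v\bigr\|_{H^{\sigma,q}}\lesssim(\|u\|_{H^{t_1,q}}^2+\|v\|_{H^{t_1,q}}^2)\|u-v\|_{H^{t_1,q}} .
\]

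\textbf{Reduction for $F_1,F_3$.} For $F_1$ we use that $\Delta$ is bounded from $H^{\sigma,q}$ to ${}_\nu H^{\sigma-2,q}={}_\nu H^{4a-s,q}$. Since $\sigma<1<1+1/q$, no boundary condition enters ${}_\nu H^{\sigma,q}=H^{\sigma,q}$. The bound follows by duality with ${}_\nu H^{2-\sigma,q'}$, on which $\Delta$ maps into $({}_\nu H^{\sigma,q})^*$, together with the identity ${}_\nu H^{-r,q}=({}_\nu H^{r,q'})^*$ recalled in Section~\ref{Sec: Preliminaries}. For $F_3$ it suffices to use the trivial embedding $H^{\sigma,q}\hookrightarrow{}_\nu H^{\sigma-2,q}$.

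\textbf{The cubic product estimate.} We write $|u|^2u-|v|^2v$ as a sum of trilinear terms, each containing one factor $u-v$ and two factors from $\{u,v\}$, as in \eqref{ineq:estimating F1 and F3}. We then apply the fractional Leibniz (Kato--Ponce) rule for $\sigma\in[0,1)$ on the bounded domain, after applying an extension operator:
\[
 \|fgh\|_{H^{\sigma,q}}\lesssim \|f\|_{H^{\sigma,r_1}}\|g\|_{L^{r_2}}\|h\|_{L^{r_2}}+\text{(permutations)} .
\]
Here $\tfrac1{r_1}=\tfrac1q-\tfrac{t_1-\sigma}{d}$ and $\tfrac1{r_2}=\tfrac1q-\tfrac{t_1}{d}$. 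The choice $t_1=(\sigma+2d/q)/3$ is exactly the scaling relation $3t_1-\sigma=2d/q$, which gives $\tfrac1{r_1}+\tfrac2{r_2}=\tfrac1q$. The condition $\sigma<d/q$ yields $t_1<d/q$, so $r_2<\infty$, and it also gives $t_1\ge\sigma$. Hence the Sobolev embeddings $H^{t_1,q}\hookrightarrow H^{\sigma,r_1}\cap L^{r_2}$ hold, and \eqref{eq:final-cubics} follows.

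\textbf{The term $F_2$.} Since $\nabla u\times\nabla u$ vanishes, we write $\Delta u\times u=\operatorname{div}(\nabla u\times u)$, and for the difference
\[
 \nabla u\times u-\nabla v\times v=\nabla(u-v)\times u+\nabla v\times(u-v).
\]
We argue by duality, as in the proofs of Propositions~\ref{prop:LWP in weak setting} and~\ref{thm:LWP in intermediate}. For $\phi\in{}_\nu H^{s\wedge2,q'}$ the boundary term vanishes by $\partial_nu=0$, so we must bound $\int_{\mathscr D}(\nabla w\times z):\nabla\phi\,\rd x$ with $\nabla\phi\in H^{(s\wedge2)-1,q'}$, where $(s\wedge2)-1\ge 2-d/q-\epsilon>0$ because $s\ge 3-d/q\ge 3/2$. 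When $t_2-d/q>0$ the factor $z\in H^{t_2,q}$ lies in $L^\infty$. The factor $\nabla w$ lies in $H^{t_2-1,q}$, which may have negative order; here $1-t_2<t_2$ because $t_2>1/2$.

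So we need a bilinear estimate for a product of negative-order and positive-order distributions. We would prove it with Bony's paraproduct decomposition, or by interpolating between the two integer endpoints $\nabla w\in L^q$ and $\nabla w\in H^{-1,q}$ (the latter via integrating by parts once more onto $z\nabla\phi$). This needs $t_2+\bigl((s\wedge 2)-1\bigr)>1-t_2$ together with the scaling count
\[
 (t_2-1-\tfrac dq)+(t_2-\tfrac dq)\ \ge\ -(s\wedge2)-\tfrac dq ,
\]
which the two choices of $t_2$ in \eqref{eq:final-spatial-exponents} are designed to satisfy: $2t_2-1=d/q$ when $s<2$, and the margin $d/(5q)$ keeps strict inequalities when $s\ge2$. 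When instead $t_2<d/q$, we use plain Hölder and Sobolev embeddings into Lebesgue spaces, as in the intermediate setting.

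\textbf{Main obstacle.} I expect the hard step to be this negative-regularity product estimate for $F_2$ in the regime $q>d$. The exponents are tight, so checking all paraproduct conditions in both cases $s<2$ and $s\ge2$ is where care is needed. The cubic terms are routine once the scaling identity for $t_1$ is noted.
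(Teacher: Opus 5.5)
Your treatment of $F_1,F_3$ is the paper's argument: your $\sigma$ is its $h$, and your $r_1,r_2$ are its $r,m$, with the same paraproduct bound after extension and Sobolev embeddings at exact scaling. For $s<2$, your H\"older--Sobolev argument for $F_2$ is the paper's Case 1; there $d=3$ and $t_2=(1+d/q)/2>1$, so $\nabla w$ is a genuine $L^m$ function.

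\textbf{The gap: $F_2$ for $s\ge 2$.} Here $t_2=\tfrac12+\tfrac{d}{5q}\le\tfrac45<1$ for every admissible $q$. So $\nabla w\in H^{t_2-1,q}$ \emph{always} has negative order, and splitting by the sign of $t_2-d/q$ is the wrong dichotomy.
\begin{itemize}
\item If $d/q>5/8$, you invoke plain H\"older and Sobolev embeddings in Lebesgue spaces. This cannot work, because $\nabla w$ lies in no Lebesgue space. Examples: $d=q=2$, where $s\ge2$ is forced and $t_2=7/10<1=d/q$; or $d=3$, $2\le q<24/5$, $s\in[2,3)$.
\item Your paraproduct route is set up only under $z\in L^\infty$, which fails in that regime. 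The borderline $d/q=5/8$ is covered by neither branch.
\item The interpolation variant, as you describe it, also breaks. If you interpolate in $w$ alone between $\nabla w\in L^q$ and $\nabla w\in H^{-1,q}$, with $z$ fixed in $H^{t_2,q}$, then the integration by parts at the second endpoint produces $\nabla z$. That is not a function, since $t_2<1$.
\end{itemize}
A minor point: your scaling count is off by one derivative. After the divergence is moved onto $\phi$, $\nabla w\times z$ must lie in $H^{1-(s\wedge2),q}$. For $s\ge2$ the condition is therefore $2t_2\ge d/q$, i.e.\ $1\ge 3d/(5q)$, which does hold.

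\textbf{The missing idea.} Move both factors at once, and use the fact that the target $H^{-2,q}$ allows two derivatives on the test function. The paper writes $F_2(u)=Q(u,u)$ with $Q(u,v)=\tfrac12(\Delta u\times v+\Delta v\times u)$. For $\phi\in{}_\nu H^{2,q'}$ it integrates by parts to
\[
\langle Q(u,v),\phi\rangle=\sum_{j=1}^d\int_{\mathscr D}(u\times\partial_jv)\cdot\partial_j\phi\,\rd x+\frac12\int_{\mathscr D}(u\times v)\cdot\Delta\phi\,\rd x ,
\]
where no derivative falls on $u$. Take $b=2t_2=1+\tfrac{2d}{5q}>\max\{1,d/q\}$. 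H\"older then gives $\|Q(u,v)\|_{H^{-2,q}}\lesssim\|u\|_{L^q}\|v\|_{H^{b,q}}$, using $\nabla v\in L^m$, $\nabla\phi\in L^n$ and $v\in L^\infty$. The mirror bound follows from the symmetry of $Q$. Bilinear complex interpolation at $\theta=\tfrac12$ then yields the estimate on $H^{t_2,q}\times H^{t_2,q}$ for every $q$, with $L^\infty$ needed only at the $H^{b,q}$ endpoint.

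Alternatively, you could keep a paraproduct argument, but prove the negative-order product bound with finite integrability exponents. That means fractional Leibniz for $z\times\nabla\phi$ in $H^{1-t_2,\cdot}$, paired with $\nabla w\in H^{t_2-1,q}$, which does not require $z\in L^\infty$. As written, however, your proposal does not handle the regime $s\ge2$, $d/q\ge 5/8$.
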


\begin{proof}
We first prove \eqref{eq:final-cubics}. Recall $F_1(u)=\Delta (|u|^2u)$ and $F_3(u)=-|u|^2u$. Let 
$T(z_1,z_2,z_3)\coloneq (z_1\cdot z_2)z_3$. We claim that
\begin{equation}\label{eq:final-trilinear}
 \|T(z_1,z_2,z_3)\|_{H^{2+4a-s,q}}
       \lesssim\prod_{j=1}^3\|z_j\|_{H^{t_1,q}}.
\end{equation}
Consequently,  trilinearity and
\eqref{eq:final-trilinear} imply
\begin{align*}\label{eq:qrange-s-cubic-difference}
   \|F_j(u)-F_j(v)\|_{H^{4a-s,q}} &\lesssim  \||u|^2u-|v|^2v\|_{H^{2+4a-s,q}}
    \\ 
    &\lesssim
    (\|u\|_{H^{t_1,q}}^2+\|v\|_{H^{t_1,q}}^2)
    \|u-v\|_{H^{t_1,q}}, \quad j \in \{1,3\}.
\end{align*}
In order to prove \eqref{eq:final-trilinear}, we let $h\coloneq 2+4a-s$.

If $h=0$, the claim follows from H\"older's inequality and   $H^{\frac{2d}{3q},q}\hookrightarrow L^{3q}$.
Suppose that $h>0$. 

Set 
\[
 r\coloneq\frac{3d}{d/q+2h},\qquad m\coloneq\frac{3d}{d/q-h}
\]
and note $1/r+2/m=1/q$. Note that by \eqref{eq:final-cubic-range}, $h<1+1/q$ and thus $ {}_\nu H^{h,q}$ coincides with the usual Bessel potential space $H^{h,q}$. By the standard paraproduct estimate,  after extension from $\mathscr D$ to the full space $\bbR^d$ (cf.\,\cite[Chapter 2, Proposition 1.1]{Taylor_book_paraproduct}), we get 
\[
 \|T(z_1,z_2,z_3)\|_{H^{h,q}}
 \lesssim\sum_{i=1}^3\|z_i\|_{H^{h,r}}
                   \prod_{j\ne i}\|z_j\|_{L^m}.\]
Finally, the Sobolev embedding
$ {}_\nu H^{t_1,q}\hookrightarrow H^{h,r}\cap L^m$ gives
\[
 \|T(z_1,z_2,z_3)\|_{H^{h,q}}
 \lesssim\prod_{j=1}^3\|z_j\|_{H^{t_1,q}}.
\]
This proves \eqref{eq:final-trilinear}.

We next prove \eqref{eq:final-F2}. Recall $F_2(u)=\operatorname{div}(\nabla u \times u)$. We distinguish the following two cases:
\begin{itemize}
    \item \emph{Case 1: $s<2$.} Since $s \ge 3-d/q$, it must hold that $d/q>1$ and $s>1$. This forces $d=3$.  Set
\[
 r\coloneq\frac{6}{3/q-1},\qquad m\coloneq\frac{6}{3/q+1}.
\]
We have $1/r+1/m=1/q$ and
$ {}_\nu  H^{t_2,q}\hookrightarrow L^r\cap W^{1,m}$. Therefore, 
\begin{align*}
 \|F_2(u)-F_2(v)\|_{H^{-1,q}}
 &\lesssim\|\nabla(u-v)\|_{L^m}\|u\|_{L^r}
          +\|\nabla v\|_{L^m}\|u-v\|_{L^r}\\
 &\lesssim(\|u\|_{H^{t_2,q}}+\|v\|_{H^{t_2,q}})
                \|u-v\|_{H^{t_2,q}}.
\end{align*}
The embedding $ {}_\nu H^{-1,q}\hookrightarrow {}_\nu H^{-s,q}$ proves
\eqref{eq:final-F2} in this case.

\item \emph{Case 2: $s\ge2$.} For
smooth functions satisfying the Neumann boundary condition, define the symmetric bilinear form 
\[
 Q(u,v)\coloneq\tfrac12(\Delta u\times v+\Delta v\times u).
\]
Since $F_2(u)=Q(u,u)$, it suffices to prove
\begin{equation}
\label{eq:qrange-s-Q-estimate final}
 \|Q(u,v)\|_{H^{-2,q}}
 \lesssim\|u\|_{H^{t_2,q}}\|v\|_{H^{t_2,q}}.
\end{equation}
Put 
$b\coloneq 2t_2$. Bilinear complex interpolation reduces this to
\begin{align}
\label{eq:qrange-s-Q-endpoint final}
 \|Q(u,v)\|_{H^{-2,q}}
 &\lesssim\|u\|_{L^q}\|v\|_{H^{b,q}},
 \\
 \|Q(u,v)\|_{H^{-2,q}}
 &\lesssim\|u\|_{H^{b,q}}\|v\|_{L^q},
\end{align}
see \cite[Theorem~4.4.1]{BerghLofstrom1976}.
Indeed, $[L^q, {}_\nu H^{b,q}]_{1/2}= {}_\nu H^{t_2,q}$. By symmetry, it suffices to establish \eqref{eq:qrange-s-Q-endpoint final}. 
To this end, let $q'=q/(q-1)$ and $\phi\in {}_\nu H^{2,q'}$.
Two integrations by parts give
\begin{equation}\label{eq:final-Q-duality}
 \begin{aligned}
\langle Q(u,v),\phi\rangle
&=\sum_{j=1}^d\int_{\mathscr D}(u\times\partial_jv)\cdot\partial_j\phi\,\mathrm dx +\frac12\int_{\mathscr D}(u\times v)\cdot\Delta\phi\,\mathrm dx
   \\
   &\eqcolon T_1+T_2.
   \end{aligned}
\end{equation}
Note $d/q\le3/2<5/3$ and $b=1+2d/(5q)$. Therefore, $b>\max\{1,d/q\}$. 
In particular, there is $x>0$ such that 
\[
 \frac1q-\frac{b-1}d <x<\min\Bigl\{\frac1q,\frac1d\Bigr\}.
\]
Let $m,n\in(1,\infty)$ such that 
 $\frac 1m=x$ and $\frac1n=\frac1{q'}-x.$
Then, the embeddings  
$$ {}_\nu  H^{b,q}\hookrightarrow W^{1,m}, \quad 
 {}_\nu H^{2,q'}\hookrightarrow W^{1,n}$$
hold. Therefore, H\"older's inequality with $\frac1q+\frac1m+\frac1n=1$ yields
\begin{equation*} 
    |T_1| \lesssim \|u\|_{L^q} \|\nabla v\|_{L^m} \|\nabla \phi\|_{L^n} \lesssim \|u\|_{L^q}\|v\|_{H^{b,q}}
                    \|\phi\|_{H^{2,q'}}.
\end{equation*}
Finally, the embedding 
$H^{b,q}\hookrightarrow L^\infty$ and H\"older's inequality give
\begin{equation*}
    |T_2| \lesssim \|u\|_{L^q} \|v\|_{L^\infty} \|\Delta \phi\|_{L^{q'}} 
 \lesssim\|u\|_{L^q}\|v\|_{H^{b,q}}
                    \|\phi\|_{H^{2,q'}}.
\end{equation*}
The estimates for $T_1$ and $T_2$ imply \eqref{eq:qrange-s-Q-endpoint final} and we conclude the proof.
\end{itemize}

\end{proof}

\begin{lemma}[Lipschitz estimates for $G$]
\label{lemma:rough-final-noise-lipschitz}
Suppose
that Assumption~\ref{ass:noise_intermediate} holds. Let $d\in\{1,2,3\}$, $q\in[2,\infty)$ and
$$
 3-\frac dq\le s<3.
$$ 
If $s\ge2$, then, 
\begin{equation}\label{eq:final-noise-nonpositive}
 \|G(t,\omega,u)-G(t,\omega,v)\|_{
       \gamma(\ell^2,X_{1/2}^{s,q})}
 \lesssim\|u-v\|_{L^q}.
\end{equation}
If $s<2$, set $t_2=(1+d/q)/2$. Then,
\begin{equation}\label{eq:final-noise-positive}
 \|G(t,\omega,u)-G(t,\omega,v)\|_{
       \gamma(\ell^2,X_{1/2}^{s,q})}
 \lesssim
 (1+\|u\|_{H^{t_2,q}}+\|v\|_{H^{t_2,q}})
       \|u-v\|_{H^{t_2,q}}.
\end{equation}
Moreover, for every $T>0$,
\begin{equation}\label{eq:final-noise-origin}
 \|G(t,\omega,0)\|_{\gamma(\ell^2,X_{1/2}^{s,q})}\le C_T.
\end{equation}
\end{lemma}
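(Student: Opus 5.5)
The plan is to first identify the target space, then treat the two regimes $s\ge2$ and $s<2$ separately, and finally handle the value at the origin. The space is $X^{s,q}_{1/2}=[{}_\nu H^{-s,q},{}_\nu H^{4-s,q}]_{1/2}={}_\nu H^{2-s,q}$. Since $3-d/q\le s<3$, we have $2-s\in(-1,1/q]$, so $2-s<1+1/q$ and no boundary conditions enter when $2-s\ge0$. For the $\gamma$-norm I use the square-function characterization from \cite[Theorem 9.4.8]{AnalysisBspaces2}, which was already used in \eqref{eq:intermediate-G-H1q-bootstrap}:
\[
\|G\|_{\gamma(\ell^2,H^{\sigma,q})}\eqsim\|g\|_{H^{\sigma,q}(\ell^2)}\quad\text{for } \sigma\ge0,
\]
with the corresponding statement for $\sigma<0$ obtained by applying the lifting $(\lambda+A)^{\sigma/4}$ on the ${}_\nu$-scale, which is bounded.

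\emph{Case $s\ge2$.} Here $2-s\le0$, so ${}_\nu H^{2-s,q}\supseteq L^q$ continuously. Hence
\[
\|G(u)-G(v)\|_{\gamma(\ell^2,X^{s,q}_{1/2})}\lesssim\|G(u)-G(v)\|_{\gamma(\ell^2,L^q)}\eqsim\|g(\cdot,u)-g(\cdot,v)\|_{L^q(\ell^2)}\le L\|u-v\|_{L^q}
\]
by \eqref{eq:g-Lip}. This gives \eqref{eq:final-noise-nonpositive}. The same embedding, together with $\|g(\cdot,0)\|_{L^q(\ell^2)}\lesssim K_T$ from \eqref{eq:g0-assumption_intermediate}, yields \eqref{eq:final-noise-origin} in this case.

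\emph{Case $s<2$.} As in Proposition \ref{prop:rough-final-Lipschitz}, this forces $d=3$ and $q<3$, with $\sigma:=2-s\in(0,d/q-1]\subset(0,1/2]$. I would bound the $H^{\sigma,q}(\ell^2)$ norm by the $H^{1,q'''}$-type norm via complex interpolation, or more simply through $\|\cdot\|_{H^{\sigma,q}}\lesssim\|\cdot\|_{L^q}^{1-\sigma}\|\cdot\|_{H^{1,q}}^{\sigma}\lesssim\|\cdot\|_{H^{1,q}}$, where the last norm is $W^{1,q}(\ell^2)$. This reduces the problem to the $W^{1,q}(\ell^2)$ Lipschitz estimate, obtained by repeating the chain-rule computation \eqref{eq:gradient-difference-G} in $L^q$. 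The terms $\nabla_xg(u)-\nabla_xg(v)$ and $\partial_yg(u)\nabla(u-v)$ are bounded by $\|u-v\|_{W^{1,q}}$ using \eqref{eq:gx-Lip} and \eqref{eq:gy-bdd}. The term $[\partial_yg(u)-\partial_yg(v)]\nabla v$ is bounded by $\|u-v\|_{L^r}\|\nabla v\|_{L^m}$ with $r,m$ as in Case 1 of Proposition \ref{prop:rough-final-Lipschitz}.

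This is where I expect the main obstacle. $W^{1,q}$ is not controlled by $H^{t_2,q}$, since $t_2=(1+3/q)/2<1$. So instead of bounding by $W^{1,q}$, I work directly at regularity $\sigma$ and interpolate the estimate only down to $W^{1,m}$ with $m<q$. Using $H^{\sigma,q}\hookleftarrow W^{1,m}$ whenever $1-3/m\ge\sigma-3/q$, i.e.\ $1/m\le 1/q+(1-\sigma)/3$, it suffices to choose $m=6/(3/q+1)$, which satisfies this condition because $\sigma\le 3/q-1$. Then:
\begin{itemize}
\item the terms $\partial_yg(u)\nabla(u-v)$ and $\nabla_x$-differences cost $\|u-v\|_{W^{1,m}}\lesssim\|u-v\|_{H^{t_2,q}}$;
\item the quadratic term costs $\|u-v\|_{L^{r}}\|\nabla v\|_{L^{m'}}$ with $1/r+1/m'=1/m$, for instance $r=m'=2m$, both dominated by $H^{t_2,q}$ norms through the Sobolev embeddings already recorded in Proposition \ref{prop:rough-final-Lipschitz};
\item the zeroth-order term is handled by \eqref{eq:g-Lip} in $L^m$.
\end{itemize}
Collecting these gives \eqref{eq:final-noise-positive}. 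The origin bound \eqref{eq:final-noise-origin} follows as before from $g(\cdot,0)\in W^{1,\infty}(\ell^2)\hookrightarrow W^{1,m}(\ell^2)$ on the bounded domain.
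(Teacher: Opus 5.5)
The case $s\ge2$ and the origin bound are fine and match the paper. The case $s<2$, however, has a genuine gap, and it comes from an arithmetic slip.

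\textbf{The claim $t_2<1$ is false.} You yourself note that $s<2$ together with $s\ge 3-d/q$ forces $d=3$ and $q<3$. Hence $3/q>1$ and $t_2=(1+3/q)/2>1$, so $H^{t_2,q}\hookrightarrow H^{1,q}$. This means the reduction to the $W^{1,q}(\ell^2)$ estimate that you wrote down first already closes the argument:
\begin{itemize}
\item the terms $\nabla_xg(u)-\nabla_xg(v)$ and $\partial_yg(u)\nabla(u-v)$ cost $\|u-v\|_{H^{1,q}}\lesssim\|u-v\|_{H^{t_2,q}}$;
\item the bilinear term is bounded in $L^q$ by $\|u-v\|_{L^r}\|\nabla v\|_{L^m}$, with $r=6/(3/q-1)$, $m=6/(3/q+1)$ and $\frac1r+\frac1m=\frac1q$;
\item the sharp embedding $H^{t_2,q}\hookrightarrow L^r\cap W^{1,m}$ controls both factors, and ${}_\nu H^{1,q}\hookrightarrow{}_\nu H^{2-s,q}$ finishes.
\end{itemize}
That is exactly the paper's proof.

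\textbf{The detour you commit to instead does not work.} First, for $q<3$ one has $m>q$, not $m<q$; for example $q=2$ gives $m=12/5$. This part is harmless, since $W^{1,m}\hookrightarrow H^{\sigma,q}$ still holds on the bounded domain.

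The real failure is the bilinear term. Measuring $g(\cdot,u)-g(\cdot,v)$ in $W^{1,m}$ requires $[\partial_yg(u)-\partial_yg(v)]\nabla v\in L^m$. But $H^{t_2,q}\hookrightarrow W^{1,m}$ is the sharp Sobolev embedding, so $\nabla v$ is in general only in $L^m$, not in $L^{2m}$. Your choice $r=m'=2m$ is therefore not admissible.

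No other choice of exponents rescues this. Any splitting $\frac1r+\frac1{m'}=\frac1m$ with $m'\le m$ forces $r=\infty$. Yet $H^{t_2,q}\not\hookrightarrow L^\infty$, because $t_2<3/q$ exactly when $q<3$. Concretely, for $q=2$, $v\in H^{5/4,2}$ gives only $\nabla v\in L^{12/5}$ and $v\in L^{12}$.

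So the argument you actually commit to does not establish \eqref{eq:final-noise-positive}. The fix is to drop the detour and work in $W^{1,q}$. There, the relation $\frac1r+\frac1m=\frac1q$ supplies exactly the integrability needed for the bilinear term.
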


\begin{proof}
We suppress the dependence on $(t,\omega)$. Recall that
$X_{1/2}^{s,q}= {}_\nu H^{2-s,q}$. We use the $\gamma$-Fubini
isomorphism and the ideal property of $\gamma$-radonifying operators;
see \cite[Theorems~9.1.10 and~9.4.8]{AnalysisBspaces2}. 
If $s\ge2$, the embedding $L^q\hookrightarrow {}_\nu H^{2-s,q}$
and \eqref{eq:g-Lip} imply
\[
 \|G(u)-G(v)\|_{\gamma(\ell^2,X_{1/2}^{s,q})}
 \lesssim\|g(\cdot,u)-g(\cdot,v)\|_{L^q(\ell^2)}
 \lesssim\|u-v\|_{L^q},
\]
which proves \eqref{eq:final-noise-nonpositive}.

Suppose now that $s<2$. Then $d/q>1$, $t_2>1$, and
$0<2-s\le1/2$. This forces $d=3$. Set
\[
 r\coloneq\frac{6}{3/q-1},\qquad m\coloneq\frac{6}{3/q+1}.
\]
Then $r,m \in (1,\infty)$, $
 \frac1r+\frac1m=\frac1q$
 and the Sobolev embedding $
 H^{t_2,q}\hookrightarrow L^r\cap W^{1,m}$ holds.
 
The chain rule gives
\begin{align*}
 \nabla[g(\cdot,u)-g(\cdot,v)]
 ={}&g_x(\cdot,u)-g_x(\cdot,v)
       +g_y(\cdot,u)\nabla(u-v)+[g_y(\cdot,u)-g_y(\cdot,v)]\nabla v.
\end{align*}
Hence, by Assumption~\ref{ass:noise_intermediate},
\[
 |g(x,u)-g(x,v)|_{\ell^2}
 +|\nabla[g(x,u)-g(x,v)]|_{\ell^2}
 \lesssim |u-v|+|\nabla(u-v)|+|u-v||\nabla v|.
\]
By H\"older's inequality with $1/r+1/m=1/q$, and the above embeddings,
\begin{align*}
 \|g(\cdot,u)-g(\cdot,v)\|_{H^{1,q}(\ell^2)}
 &\lesssim\|u-v\|_{H^{1,q}}
          +\|u-v\|_{L^r}\|\nabla v\|_{L^m}\\
 &\lesssim(1+\|u\|_{H^{t_2,q}}+\|v\|_{H^{t_2,q}})
                 \|u-v\|_{H^{t_2,q}}.
\end{align*}
Since $ {}_\nu H^{1,q}\hookrightarrow {}_\nu H^{2-s,q}=X_{1/2}^{s,q}$,
this proves \eqref{eq:final-noise-positive}. 

Finally,  \eqref{eq:final-noise-origin} follows from \eqref{eq:g0-assumption_intermediate}.

\end{proof}

We now choose the parameters for the local theory. Let $p\in(2,\infty)$ and
$q\in[2,\infty)$. Choose $s>0$ such that
\begin{equation}\label{eq:final-local-parameters}
\quad
 3-\frac dq<s<\min\left\{3,5-\frac dq-\frac4p\right\}.
\end{equation}
Such $s$ exists since $p>2$. Let
\begin{equation}\label{eq:qrange-critical-weight-s final}
\kappa = \frac{p(5 - s - d/q) - 4}{4}
\end{equation}
and note that 
$\kappa \in (0,p/2-1)$. Moreover, 
\begin{equation}\label{eq:qrange-s-admissibility final}
 X_{1-\frac{1+\kappa}{p},p}^{s,q}
 =B^{d/q-1}_{q,p}.
\end{equation}
For the endpoint $p=q=2$, we instead set
\begin{equation}
\label{eq:qrange-Hilbert-parameters final}
 s=3-\frac d2,\qquad 
 \kappa=0.
\end{equation}
Then $X_{1/2}^{s,2}=B_{2,2}^{d/2-1}=H^{d/2-1}$.

\begin{proposition}[Local well-posedness]
\label{thm:rough-final-local}
Suppose that Assumption~\ref{ass:noise_intermediate} holds. Let $d\in\{1,2,3\}$ and assume either 
\begin{enumerate}[(I)]
    \item   $q\in[2,\infty)$, $p \in (2,\infty)$ and $s,\kappa$ as in
\eqref{eq:final-local-parameters}--\eqref{eq:qrange-critical-weight-s final}; \label{item: I}

\vspace{2mm}

\item  $p=q=2$ and $s,\kappa$ as in 
\eqref{eq:qrange-Hilbert-parameters final}. \label{item: II}
\end{enumerate}
Then, for every 
$$u_0\in L^0_{\mathcal F_0}(\Omega;B^{d/q-1}_{q,p}),$$
there exists a unique maximal local solution $(u,\sigma)$ of equation \eqref{eq: LLB1} with $\sigma>0$ a.s., such that
\begin{equation}\label{eq:final-local-regularity}
 u\in L^p_{\mathrm{loc}}([0,\sigma),t^\kappa; {}_\nu H^{4-s,q})
       \cap C([0,\sigma);B^{d/q-1}_{q,p})\,\text{ a.s.}
\end{equation}
Moreover, if $p>2$, then almost surely,
\begin{equation}\label{eq:final-local-interior}
 u\in H^{\theta,r}_{\mathrm{loc}}((0,\sigma); {}_\nu H^{4-s-4\theta,q})
       \cap C((0,\sigma); {}_\nu B^{4-s-4/r}_{q,r}), \quad 0\le \theta<1/2, \quad  r \in [2,\infty).
\end{equation}
\end{proposition}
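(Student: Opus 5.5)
We will verify the hypotheses of the shifted local well-posedness theorem of \cite[Section~4.1]{AGV26shift} in the scale $(X_0^{s,q},X_1^{s,q})=({}_\nu H^{-s,q},{}_\nu H^{4-s,q})$, with weight $t^\kappa$ and integrability $p$. The conclusions \eqref{eq:final-local-regularity}--\eqref{eq:final-local-interior} will then follow directly from that abstract result.

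\textbf{Linear part and trace space.} Lemma~\ref{lemma:SMR} gives $A_{-s,q}\in\mathcal{SMR}^\bullet_{p,\kappa}$ for $p>2$ and $\kappa\in[0,p/2-1)$. The parameter choice \eqref{eq:final-local-parameters}--\eqref{eq:qrange-critical-weight-s final} guarantees that $\kappa$ lies in this range. In case \ref{item: II}, the same lemma gives $A_{-s,2}\in\mathcal{SMR}^\bullet_{2,0}$. The trace space is identified in \eqref{eq:qrange-s-admissibility final}: real interpolation gives
\[
X^{s,q}_{1-\frac{1+\kappa}{p},p}={}_\nu B^{4-s-4(1+\kappa)/p}_{q,p}={}_\nu B^{d/q-1}_{q,p}.
\]
Since $d/q-1<1+1/q$, no boundary condition enters, so this space equals $B^{d/q-1}_{q,p}$.

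\textbf{Nonlinearities.} We express the spaces in Proposition~\ref{prop:rough-final-Lipschitz} as interpolation spaces of the scale. We have ${}_\nu H^{t,q}=X^{s,q}_{\beta}$ with $\beta=(t+s)/4$. Similarly, $H^{4a-s,q}=X^{s,q}_a$ and $H^{-(s\wedge2),q}\supseteq X_0^{s,q}$.

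For $F_1,F_3$, take $\rho=2$ and $\beta_1=(t_1+s)/4$, choosing $a$ as large as \eqref{eq:final-cubic-range} allows. The shifted condition is
\[
\frac{1+\kappa}{p}\le\frac{3(1-\beta_1)+a}{2},
\qquad \frac{1+\kappa}{p}=\frac{5-s-d/q}{4}.
\]
A direct computation reduces it to $4a\le 4a$, so it holds with equality, i.e.\ criticality. This is consistent with the data being critical.

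For $F_2$, take $\rho=1$ and $\beta_2=(t_2+s)/4$. If $s\ge2$, the target is $H^{-2,q}=X^{s,q}_{a_2}$ with $a_2=(s-2)/4\ge0$. If $s<2$, the target is $H^{-1,q}=X^{s,q}_{(s-1)/4}$, so a positive shift is also available. In both cases we need $(1+\kappa)/p\le 2(1-\beta_2)+a_2$. Checking this is elementary arithmetic: for $s\ge 2$ it becomes $5-s-d/q\le 6-2t_2-s$, i.e.\ $2t_2\le 1+d/q$. This holds since $1+2d/(5q)\le 1+d/q$, with strict inequality for $q<\infty$.

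For the noise, Lemma~\ref{lemma:rough-final-noise-lipschitz} gives the estimate into $\gamma(\ell^2,X^{s,q}_{1/2})$. When $s\ge2$ the noise is globally Lipschitz from $L^q=X^{s,q}_{s/4}$, and $s/4<1-(1+\kappa)/p$ since $d/q-1<4-s-s\cdot 1$ up to checking. When $s<2$ it is quadratic with the same $\beta_2$ as $F_2$ and a positive shift. The bound \eqref{eq:final-noise-origin} makes $G(\cdot,0)$ an admissible additive term.

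\textbf{Main obstacle.} The hard step is the parameter bookkeeping. One must check that each $\beta_j$ lies in $(1-\frac{1+\kappa}{p},1)$, and that a single $a\ge0$ compatible with \eqref{eq:final-cubic-range} exists for every $q\in[2,\infty)$. The endpoints $s$ near $3-d/q$, where $2-s+4a<d/q$ is tight, need particular care, and I would first take $a=(s-2)_+/4$-type choices and adjust. In case \ref{item: II}, with $q=2$ and $s=3-d/2$, the same inequalities must be rechecked with $\kappa=0$.

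Once the hypotheses are verified, \cite[Section~4.1]{AGV26shift} yields the unique maximal solution and the regularity \eqref{eq:final-local-regularity}. For $p>2$ it also gives instantaneous regularization, as in \cite[Section~6]{AgVe22b}; combined with the trace embedding \cite[Proposition~2.1]{AgVe25}, this produces \eqref{eq:final-local-interior}.
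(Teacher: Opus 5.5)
Your route for case (I) is the paper's: you apply the shifted theorem \cite[Theorem~4.5]{AGV26shift} in the scale $({}_\nu H^{-s,q},{}_\nu H^{4-s,q})$ with $\mu=(1+\kappa)/p=(5-s-d/q)/4$ and $\beta_j=(s+t_j)/4$, using Proposition~\ref{prop:rough-final-Lipschitz} and Lemma~\ref{lemma:rough-final-noise-lipschitz}. Your criticality identity for $F_1,F_3$ and your $F_2$ computation for $s\ge2$ are exactly \eqref{eq:final-cubic-criticality} and \eqref{eq:final-F2-subcriticality}. The gap is that you defer the admissibility conditions, namely $\beta_j\in(1-\mu,1)$, $a<1-\mu$, and one shift serving all nonlinearities. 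These checks are the real content of the proof here, and the choices you do prescribe fail them.

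\emph{The shift.} Choosing $a$ ``as large as \eqref{eq:final-cubic-range} allows'' is wrong. Since $\beta_1=(s+1+d/q+2a)/6$, the condition $\beta_1<1$ is equivalent to $a<2\mu$, and this fails near the top of the range. For example, take $d=3$, $q=2$, $p=10$, $s=2.9$. The range permits $a$ close to $0.475$, but $2\mu=0.3$; then $t_1>4-s$ and $\beta_1>1$. The criticality identity does not depend on $a$, so you should take the smallest admissible shift $a=(s-2)_+/4$, as the paper does. Then $t_1=(2d/q+(2-s)_+)/3$, and $a<1-\mu<\beta_1<1$ follows from $t_1>d/q-1$, $t_1<4-s$ and $4(1-\mu-a)=s+d/q-1-(s-2)_+>0$. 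Likewise $\beta_2\in(1-\mu,1)$ follows from $d/q-1<t_2<4-s$. Your separate $F_2$-shift $(s-1)/4$ for $s<2$ is unnecessary, since with $a=0$ the margin is already $(2-s)/4$. It would also give $2-s+4a=1$, which is incompatible with \eqref{eq:final-cubic-range} if one common shift is used.

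\emph{The noise.} Your noise claim is false: $s/4<1-\mu=(s+d/q-1)/4$ holds if and only if $q<d$. Where it does hold, it places $\beta$ below the admissible range rather than inside it. The fix is $\beta_4=(s+d/(2q))/4$. Then $X^{s,q}_{\beta_4}={}_\nu H^{d/(2q),q}\hookrightarrow L^q$, and $\beta_4\in(1-\mu,1)$ because $q>d/2$ and $s<3$.

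\emph{The endpoint and the regularization.} For $p=q=2$ the paper does not reuse the $L^p$ shifted theorem. It invokes the Hilbert-space result \cite[Theorem~5.6]{BGV26shift}, with coercivity replaced by $A_{-s,2}\in\mathcal{SMR}^\bullet_{2,0}$. It then verifies $(1+\rho_j)(2\beta_j-1)\le 1+2a$ with $a=(s-2)_+/4$ and the explicit values
\[
\beta_1=\beta_3=\tfrac23+\tfrac{(2-d)_+}{24},\qquad
\beta_2=\beta_4=\tfrac78-\tfrac d{10}\ (d\le2),\quad \beta_2=\beta_4=\tfrac{11}{16}\ (d=3).
\]
You need either that reference or a check that your theorem covers $p=2$, $\kappa=0$; either way these inequalities still have to be verified, which you left undone. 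Finally, \eqref{eq:final-local-interior} must come from the shifted regularization result \cite[Theorem~4.8]{AGV26shift} together with the trace embedding. The unshifted bootstrap results of \cite[Section~6]{AgVe22b} need the nonlinearities to be (sub)critical into $X_0^{s,q}$, and this fails for $F_2$ whenever $q\ge 2d$ (Remark~\ref{remark:role of shift a}).
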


\begin{proof}
Suppose first that \ref{item: I} holds.
We apply \cite[Theorem 4.5]{AGV26shift}. 
Put 
$$\mu\coloneq \frac{1+\kappa}p=\frac{5-s-d/q}4$$ and note $1/p<\mu<1/2$. We apply  Proposition~\ref{prop:rough-final-Lipschitz} with 
$$a=\frac{(s-2)_+}{4} \in [0,1/4)$$
and $t_1,t_2$ as in \eqref{eq:final-spatial-exponents}.
Note
$$X_a^{s,q}= {}_\nu  H^{-(s\wedge 2),q}.$$
Moreover, 
$2-s+4a=(2-s)_+$ and thus 
\eqref{eq:final-cubic-range} holds. 
Set 
$$\beta_1\coloneq \frac{s+t_1}4=\frac{s+d/q+1+2a}{6}.$$
Note $t_1<4-s$ and thus $\beta_1 <1$. Indeed, 
when $s\ge2$ this follows from $t_1= 2d/(3q)\le1$ and $s<3$. When $s<2$, it follows from 
$s+t_1=(2s+2+2d/q)/3<3$.
By \eqref{eq:final-cubics}, $F_1$ and $F_3$ are locally Lipschitz from $X_{\beta_1}^{s,q}$ to $ X_{a}^{s,q}$ with $\rho_1=\rho_3=2$. Moreover,  
\begin{equation}\label{eq:final-cubic-criticality}
 \frac{3(1-\beta_1)+a}{2}=\frac{5-s-d/q}{4}=\mu
\end{equation}
and thus $F_1$ and $F_3$ are critical. Note $a<1-\mu<\beta_1<1$. The lower bound for $\beta_1$
is equivalent to $t_1>d/q-1$, which follows from
$3-d/q+(2-s)_+>0$. The bound for $a$ follows from
$4(1-\mu-a)=s+d/q-1-(s-2)_+>0$.

Similarly, with $\beta_2=(s+t_2)/4$, \eqref{eq:final-F2} implies that $F_2 \colon X_{\beta_2}^{s,q} \to X_{a}^{s,q}$ is locally Lipschitz with $\rho_2=1$. Moreover, 
\begin{equation}\label{eq:final-F2-subcriticality}
2(1-\beta_2)+a-\mu
=
\begin{cases}
\tfrac{2-s}{4}, & s<2,\\[2mm]
\tfrac{3d}{20q}, & s\ge 2.
\end{cases}\end{equation}
Since the right hand side is always positive, $F_2$ is strictly subcritical. Note that $\beta_2\in(1-\mu,1)$. Indeed,
$
\beta_2-(1-\mu)=\frac{t_2+1-d/q}{4}>0,
$
since $t_2>d/q-1$ in both cases.

For the noise $G$, set 
$$\beta_4 \coloneq 
\begin{cases}
    \dfrac{s+d/(2q)}{4}, & s \ge 2,
    \\[2mm]
    \dfrac{s+t_2}{4}, & s<2,
\end{cases}$$
where $t_2=(1+d/q)/2$. If $s\ge2$ then $X_{\beta_4}^{s,q}= {}_\nu H^{\frac{d}{2q},q} \hookrightarrow L^q$ and so \eqref{eq:final-noise-nonpositive} implies that $G \colon X_{\beta_4}^{s,q} \to \gamma(\ell^2,X_{1/2}^{s,q})$ is globally Lipschitz, and in particular strictly subcritical. If $s<2$ then $X_{\beta_4}^{s,q}= {}_\nu H^{t_2,q}$ and so \eqref{eq:final-noise-positive} implies that $G$ is locally Lipschitz with $\rho_4=1$, which is again strictly subcritical by \eqref{eq:final-F2-subcriticality}. The required $L^0(\Omega;L^p(0,T;w_\kappa;\gamma(\ell^2,X_{1/2}^{s,q})))$ bound for $G(t,\omega,0)$ follows from
\eqref{eq:final-noise-origin}.

The realization $A_{-s,q}$ of $A$ on $X_{0}^{s,q}$ 
has stochastic maximal regularity by Lemma \ref{lemma:SMR}. 
All hypotheses of 
\cite[Theorem~4.5]{AGV26shift} are satisfied.
That theorem implies the existence of a unique maximal solution $(u,\sigma)$ with $\sigma>0$ a.s. and the regularity
\eqref{eq:final-local-regularity}. Since $p>2$, \cite[Theorem~4.8]{AGV26shift}, the trace embedding of \cite[Proposition 2.1]{AgVe25} and the identity $X^{s,q}_{1-1/r,r}= {}_\nu B_{q,r}^{4-s-4/r}$ imply \eqref{eq:final-local-interior}. 

Suppose now that \ref{item: II} holds. Then 
$$s=3-d/2 
=
\begin{cases}
    5/2, &d=1
    \\
    2,&d=2
    \\
    3/2, &d=3.
\end{cases}
$$
We apply \cite[Theorem 5.6]{BGV26shift} and note that the linear coercivity assumption therein can be replaced by the stochastic maximal regularity result of Lemma \ref{lemma:SMR}.
By Proposition \ref{prop:rough-final-Lipschitz} with $a=(s-2)_+/4$, which is allowed since 
$2-s+4a \in\{0,1/2\}$, and by Lemma \ref{lemma:rough-final-noise-lipschitz}, the nonlinearities $F_j \colon X_{\beta_j}^{s,2} \to X_a^{s,2}$ and the noise $G\colon X_{\beta_4}^{s,2} \to \gamma(\ell^2,X_{1/2}^{s,2}) $ are locally Lipschitz, with 
\[
\beta_1=\beta_3
\coloneq 
\frac{2}{3}+\frac{(2-d)_+}{24}, \qquad 
\beta_2=\beta_4 \coloneq 
\begin{cases}
\dfrac{7}{8}-\dfrac{d}{10}, & d\in\{1,2\},\\[2mm]
\dfrac{11}{16}, & d=3.
\end{cases}
\]
Moreover, the sub-criticality condition
$$  (1+\rho_j)(2\beta_j-1)\le 1 +2a, \quad j \in \{1,2,3\}$$
is satisfied, where equality holds only for $j\in\{1,3\}$. Similarly, the noise $G$ remains strictly subcritical.  Therefore, all the assumptions of  \cite[Theorem 5.5]{BGV26shift} are satisfied and we are done. 

\end{proof}

\begin{remark}
Proposition \ref{thm:rough-final-local} remains valid
if \eqref{eq:g0-assumption_intermediate} is replaced by
the weaker assumption
\eqref{eq:intermediate-weaker-origin intro}.
Indeed, the only thing left to check is the required $L^0(\Omega;L^p(0,T;w_\kappa;\gamma(\ell^2,X_{1/2}^{s,q})))$ bound for $G(t,\omega,0)$, which follows from the identity $X_{1/2}^{s,q} =  H^{2-s,q}$ and the embedding $W^{1,4} \hookrightarrow H^{2-s,q}$. 
\end{remark}

\subsubsection{Global well-posedness}
We are now ready to present the proof of Theorem \ref{thm:rough-final-global}. We recall that $d\in \{1,2,3\}$ and  either one of the following is satisfied:
\begin{enumerate}[(I)]
    \item \label{item: I1} $q\in[2,\infty)$, $p \in (2,\infty)$ and $s,\kappa$ satisfy 
    $$
 3-\frac dq<s<\min\left\{3,5-\frac dq-\frac4p\right\}, \qquad 
\kappa = \frac{p(5 - s - d/q) - 4}{4};
$$

\item \label{item: I2}  $p=q=2$ and 
$$s=3-\frac d2 , \qquad \kappa=0.$$
\end{enumerate}
For the reader's convenience, we split the proof into two parts: first we consider the case \ref{item: I1}  and then \ref{item: I2}. Recall 
$$X_0^{s,q} =  {}_\nu H^{-s,q},\qquad X_1^{s,q}= {}_\nu H^{4-s,q}.$$

\begin{proof}[Proof of Theorem \ref{thm:rough-final-global} with assumption \ref{item: I1}] Let $(u,\sigma)$ be the unique maximal solution of
Proposition~\ref{thm:rough-final-local}. We divide the proof into three steps.

\medskip\noindent
\emph{Step 1:  parabolic regularization.} We show that
\begin{equation}\label{eq:rough-global-H2q}
 u\in H^{\theta,r}_{\mathrm{loc}}((0,\sigma); {}_\nu H^{2-4\theta,q})
\, \text{ a.s.}, \quad 0\le \theta <1/2, \quad r \in [2,\infty).
\end{equation}
Evidently, it suffices to prove \eqref{eq:rough-global-H2q} for sufficiently large $r$. Let
$r_0>\max\{6,4/(3-s)\}$ be fixed but arbitrary. Then the embedding $ {}_\nu B_{q,r_0}^{4-s-4/r_0} \hookrightarrow B^1_{q,r_0}$ and \eqref{eq:final-local-interior} yield 
\begin{equation}\label{eq:rough-global-first-regularity}
 u\in H^{\theta,r}_{\mathrm{loc}}((0,\sigma); {}_\nu H^{4-s-4\theta,q}) \cap  C((0,\sigma); B^1_{q,r_0}) 
 \quad\text{a.s.,} \quad 0\le \theta<1/2, \quad  r\in[r_0,\infty).
\end{equation}
If $s\le2$,  then \eqref{eq:rough-global-H2q} follows immediately from
\eqref{eq:rough-global-first-regularity}. 

Suppose that $s>2$. We prove \eqref{eq:rough-global-H2q} using a stochastic maximal regularity argument, similar to Lemma \ref{lemma:parab reg variational}. 
Proposition~\ref{prop:rough-final-Lipschitz}, with
$a=(s-2)/4$, and Lemma~\ref{lemma:rough-final-noise-lipschitz}
give, 
\begin{equation}\label{eq:rough-global-Hminus2-forcing}
 \begin{aligned}
 \|F(z)\|_{ {}_\nu H^{-2,q}}&\lesssim 1+\|z\|_{H^{4-s,q}}^3,\\
 \|G(t,\omega,z)\|_{\gamma(\ell^2,L^q)}
     &\lesssim_T 1+\|z\|_{H^{4-s,q}}.
 \end{aligned}
\end{equation}
Here, we also used that $t_1,t_2 < 4-s$. 
We consider the scale 
$Z_0= {}_\nu H^{-2,q}$, $Z_1= {}_\nu  H^{2,q}$. Then 
\begin{equation}\label{eq:trace space for Z in rough data}
    Z_{1-\frac{1+\kappa_0}{r_0},r_0}=Z_{3/4,r_0}=B^1_{q,r_0} ,\qquad \kappa_0\coloneq \frac{r_0}{4}-1.
\end{equation}
Fix $0<\alpha<\beta<T<\infty$ 
and let $(\sigma_n)$ be a localizing sequence for $(u,\sigma \wedge T)$ with $\sigma_n<\sigma \wedge T$ (cf.\,\cite[Proposition~5.4]{AgVe25}). 
Let $V_n\coloneq \{\sigma_n>\alpha\}$, $\xi_n \coloneq \mathbf{1}_{V_n} u(\alpha)$, and on $(\alpha,T)$ set
\begin{equation}\label{eq:fn and gn for rough data}
f_n\coloneq\mathbf1_{V_n}\mathbf1_{(\alpha,\sigma_n]}F(u),\quad  g_n\coloneq\mathbf1_{V_n}\mathbf1_{(\alpha,\sigma_n ]}G(\cdot,u).
\end{equation}
By \eqref{eq:rough-global-first-regularity}, with $r=3r_0$ and $\theta=0$, and
\eqref{eq:rough-global-Hminus2-forcing}, almost surely,
\[
 f_n\in L^{r_0}(\alpha,T , w_\alpha^{\kappa_0}; Z_0),\qquad
 g_n\in L^{r_0}(\alpha,T ,w_\alpha^{\kappa_0}; \gamma(\ell^2,Z_{1/2})),
\]
where $w_\alpha^{\kappa_0}(t)=(t-\alpha)^{\kappa_0}$. Here we also used that the weight $w_\alpha$ is bounded. 

Note that \eqref{eq:rough-global-first-regularity} and \eqref{eq:trace space for Z in rough data} give $\xi_n \in L^0_{\mathcal F_\alpha}(\Omega; Z_{1-(1+\kappa_0)/r_0,r_0} )$. 
By Lemma~\ref{lemma:SMR}, the realization $A_{-2,q}$ of $A$ on $Z_0$ has $\mathcal{SMR}^\bullet_{r_0,\kappa_0}$. By \cite[Proposition~3.11]{AgVe25} there exists a unique solution to 
$$
\begin{cases}
  \mathrm{d}z_n + A_{-2,q}z_n\,\mathrm{d}t = f_n\,\mathrm{d}t + g_n\,\mathrm{d}W_{\ell^2}, \quad t \in (\alpha, T] \\
  z_n(\alpha) = \xi_n.
\end{cases}
$$
with 
\begin{equation}\label{eq:regularity for zn in rough data}
    z_n\in H^{\theta,r_0}(\alpha,T, w_\alpha^{\kappa_0}; {}_\nu H^{2-4\theta,q})\, \text{ a.s.,} \quad 0\le \theta<1/2.
\end{equation}
Note that the family of operators $(A_{s,q})$ is consistent and that 
$Z_0 \hookrightarrow X_0^{s,q}$,
$Z_1\hookrightarrow X_1^{s,q}$. Hence uniqueness for the linear
equation in $X_0^{s,q}$ gives
\begin{equation}\label{eq:zn=u in rough data}
     z_n=\mathbf1_{V_n} u\quad\text{on }[\alpha,\sigma_n ] \, \text{ a.s. on $V_n$.}
\end{equation}
Since $\beta>\alpha$, the weight $w_\alpha$ is bounded below on $[\beta,T]$. 
Hence, \eqref{eq:regularity for zn in rough data} and \eqref{eq:zn=u in rough data} give 
$$ u \in H^{\theta,r_0}(\beta,\sigma_n, w_\alpha^{\kappa_0}; {}_\nu H^{2-4\theta,q})\, \text{ a.s. on $\{\sigma_n>\beta\}$,} \quad 0\le \theta<1/2.$$
Letting $n\to \infty$, and recalling that $\beta>0$ was arbitrary, 
proves \eqref{eq:rough-global-H2q}. 

\medskip\noindent
\emph{Step 2: identification with the variational solution.}
Consider the scale $ Y_0= {}_\nu H^{-1}$, $Y_1= {}_\nu H^3$ from the variational intermediate setting. 
The Lipschitz estimates from that setting (see
Section~\ref{sec: LWP in intermediate}) and the embedding $ {}_\nu H^{2,q}\hookrightarrow H^{3/2}$ give
\begin{equation}\label{eq:rough-global-Y-forcing}
 \begin{aligned}
  \|F(z)\|_{Y_0}&\lesssim 1+\|z\|_{H^{3/2}}^3 \lesssim 1+\|z\|_{H^{2,q}}^3 ,
\\
 \|G(t,\omega,z)\|_{\gamma(\ell^2,Y_{1/2})}
 &\lesssim_T1+\|z\|_{H^{3/2}} \lesssim 1+\|z\|_{H^{2,q}}.
 \end{aligned}
\end{equation}
Let $\alpha>0$. 
By \eqref{eq:rough-global-Y-forcing} and \eqref{eq:rough-global-H2q}, the nonlinearities $f_n$ and $g_n$ as defined in \eqref{eq:fn and gn for rough data}, satisfy
\[
 f_n \in L^2(\alpha,T;Y_0),\qquad
 g_n \in L^2
       (\alpha,T;\gamma(\ell^2,Y_{1/2})), \, \text{ a.s.}
\]
Moreover, $\xi_n \coloneq \mathbf{1}_{V_n} u(\alpha) \in L^0_{\mathcal F_\alpha}(\Omega;H^1)$. Indeed, this follows from \eqref{eq:final-local-interior} and the embedding $B_{q,r}^{4-s-4/r} \hookrightarrow H^1$, which holds for any $r>\max \{2,4/(3-s)\}$. 
Repeating the stochastic maximal regularity argument of
Step 1 in the couple $(Y_0,Y_1)$ yields 
\begin{equation}\label{eq:rough-global-local-variational}
 u\in L^2_{\mathrm{loc}}((0,\sigma); {}_\nu H^3)
       \cap C((0,\sigma);H^1)\quad\text{a.s.}
\end{equation}
Here we also used the embeddings
$  Y_0\hookrightarrow X_0^{s,q} $ and 
 $Y_1\hookrightarrow X_1^{s,q}$, which hold since $s>3-\frac dq>1+\frac d2-\frac dq
$. 

On the other hand, by
Theorem \ref{theorem:GWP_intermediate},  there is a unique global variational solution
$v_\alpha$ with initial data $\mathbf{1}_{\{\sigma>\alpha\}} u(\alpha)$ at time $\alpha$.
By \eqref{eq:rough-global-local-variational}, the restriction of $u$ to $[\alpha,\sigma)\times \{\sigma>\alpha\}$ is a local
variational solution with the same initial value. 
Uniqueness in the intermediate setting implies
\begin{equation}\label{eq:rough-global-identification}
 u=v_\alpha \quad\text{on }[\alpha,\sigma)\, \text{ a.s. on } \{\sigma>\alpha\}.
\end{equation}

\medskip\noindent
\emph{Step 3: global well-posedness and regularity.} 
Fix $0<\alpha<\beta<T<\infty$. 
By the blow-up criterion of 
\cite[Theorem~4.7]{AGV26shift} with 
$
\gamma=4-s-\frac{4\kappa}{p}
$, 
\begin{equation}\label{eq:rough-global-critical-blowup}
 \mathbb P\Bigl(
 \beta<\sigma<T,\ 
 \sup_{t \in {[\beta,\sigma)}}\|u(t)\|_{B^{d/q-1}_{q,p}}
 +\|u\|_{L^p(\beta,\sigma; {}_\nu H^{\gamma,q})}<\infty
 \Bigr)=0.
\end{equation}
Indeed, this follows from  $X_{1-(1+\kappa)/p,p}=B^{d/q-1}_{q,p}$ and $X_{1-\kappa/p}= {}_\nu H^{\gamma,q}$. 
Note $\gamma<4-s<3$, since $s>1$.
Therefore, $H^{3,q} \hookrightarrow H^{\gamma,q}$ and thus 
\eqref{eq:parabolic reg of intermediate sol} gives
\[
 v_\alpha\in L^p(\beta,T; {}_\nu H^{\gamma,q})\quad\text{a.s.}
\]
Sobolev's embedding $
 H^1\hookrightarrow B^{d/q-1}_{q,p},
 $
\eqref{eq:rough-global-identification}  and the global $H^1$ continuity of $v_\alpha$ imply that, on $\{\beta<\sigma<T\}$,
\[
 \begin{aligned}
 &\sup_{t\in[\beta,\sigma)}\|u(t)\|_{B^{d/q-1}_{q,p}}
       +\|u\|_{L^p(\beta,\sigma;H^{\gamma,q})}
\lesssim
 \sup_{t\in [\beta, T]}\|v_\alpha(t)\|_{H^1}
       +\|v_\alpha\|_{L^p(\beta,T;H^{\gamma,q})}<\infty
 \quad\text{a.s.}
 \end{aligned}
\]
 This, together with \eqref{eq:rough-global-critical-blowup}, yields
$\mathbb P(\beta<\sigma<T)=0$. Since $\beta$ and $T$ were arbitrary, 
$\sigma=\infty$ almost surely.

Uniqueness and the regularity \eqref{eq:rough-global-critical-regularity} follow directly from 
Proposition \ref{thm:rough-final-local}.
The parabolic regularization \eqref{eq:rough-global-smoothing} follows from \eqref{eq:rough-global-identification}  and \eqref{eq:parabolic reg of intermediate sol}. 

\end{proof}

We now present the proof for the endpoint case $p=q=2$. 
\begin{proof}[Proof of Theorem \ref{thm:rough-final-global} with assumption \ref{item: I2}] Let $(u,\sigma)$ be the unique maximal solution of
Proposition~\ref{thm:rough-final-local}. 
In this case,
\[
 X_0^{s,2}= {}_\nu H^{d/2-3},\qquad X_1^{s,2}= {}_\nu H^{1+d/2},\qquad
 X_{1/2}^{s,2}=H^{d/2-1}.
\]
Since $u\in L^2_{\mathrm{loc}}([0,\sigma);X_1^{s,2})$ and
$X_1^{s,2}\hookrightarrow H^1$, Fubini's theorem gives
\[
 \xi_\alpha\coloneq \mathbf1_{\{\sigma>\alpha\}}u(\alpha)
          \in L^0_{\mathcal F_{\alpha}}(\Omega;H^1), \quad \text{for a.a. $\alpha>0$}
\]
Let $Y_0= {}_\nu H^{-1}$ and $Y_1= {}_\nu H^{3}$. 
Let $v_\alpha$  be the global variational solution with initial data $\mathbf{1}_{\{\sigma>\alpha\}} u(\alpha)$ at time $\alpha$, given by Theorem \ref{theorem:GWP_intermediate}. For every $T>\alpha$, almost surely,
\[
 v_\alpha\in C([\alpha,T];H^1)\cap L^2(\alpha,T; {}_\nu H^3).
\]
Note 
$
 Y_0\hookrightarrow X_0^{s,2},$ 
 $Y_{1/2} \hookrightarrow X_{1/2}^{s,2}$ and 
 $Y_1 \hookrightarrow X_1^{s,2}
 $. Moreover, $
 \|z\|_{H^{3/2}}
       \lesssim\|z\|_{H^1}^{3/4}\|z\|_{H^3}^{1/4}
$       
and thus  $v_\alpha \in L^8(\alpha,T;H^{3/2})$. The above, together with \eqref{eq:rough-global-Y-forcing}, 
imply   that $F(v_\alpha)\in L^2(\alpha,T;X_0^{s,2})$ and
$G(\cdot,v_\alpha)\in L^2(\alpha,T;\gamma(\ell^2,X_{1/2}^{s,2}))$ almost surely.
Hence $v_\alpha$ is also a local solution in the $(X_0^{s,2},X_1^{s,2})$-scale, with initial value $\mathbf{1}_{\{\sigma>\alpha\}} u(\alpha)$ at time $\alpha$. 
Uniqueness from Proposition~\ref{thm:rough-final-local} implies 
\begin{equation}\label{va:u=va rough p=q=2}
    u=v_\alpha \quad\text{on }[\alpha,\sigma)
       \quad\text{a.s. on }\{\sigma>\alpha\}.
\end{equation}
Hence, by the blow-up criterion of \cite[Proposition 6.2]{BGV26shift} we conclude that $\sigma=\infty$ a.s.

Uniqueness and the regularity \eqref{eq:rough-global-critical-regularity} follow directly from 
Proposition \ref{thm:rough-final-local}.
The parabolic regularization \eqref{eq:rough-global-smoothing} follows from \eqref{va:u=va rough p=q=2} and \eqref{eq:parabolic reg of intermediate sol}. 

\end{proof}

\begin{remark}[The role of the positive spatial shift $a>0$] \label{remark:role of shift a}
Suppose that $p>2$ and that we apply the unshifted local theory of \cite{AgVe25},
estimating the nonlinearities in $X_0^{s,q}$. To obtain the trace space
$B^{d/q-1}_{q,p}$, we must choose
\[
\mu\coloneq\frac{1+\kappa}{p}
=\frac{5-s-d/q}{4}.
\]
The restriction $\mu<1/2$ then requires
\[
s>3-\frac dq.
\]
Let $t_2$ be as in \eqref{eq:final-spatial-exponents} and let $\beta_2\coloneq (s+t_2)/4$. The Lipschitz estimate \eqref{eq:final-F2} for $F_2 \colon X_{\beta_2}^{s,q} \to X_0^{s,q}$ holds with  growth exponent $\rho_2=1$. The unshifted subcriticality condition
becomes
\[
\mu\le2(1-\beta_2)
\quad\Longleftrightarrow\quad
s+2t_2\le3+\frac dq.
\]
Since $t_2\ge1/2$, this implies $s\le2+d/q$. Consequently, the
parameters can be admissible only if
\[
3-\frac dq<2+\frac dq
\quad\Longleftrightarrow\quad q<2d.
\]
In particular, this also forces $d>1$. 
 Thus, for $q\ge2d$, the
precession term is supercritical for every admissible $s$. The positive shift $a>0$ removes this obstruction:
$F_2$ is strictly subcritical in the shifted theory; see
\eqref{eq:final-F2-subcriticality}.
\end{remark}

\begingroup
\renewcommand{\addcontentsline}[3]{}
\section*{Acknowledgements}
\endgroup
The authors wish to thank Antonio Agresti and Mark Veraar for organizing the Oberwolfach Seminar “Stochastic Partial Differential Equations in Critical Spaces” from which this project
originated based on their proposal.

\newpage
\bibliographystyle{alpha}
\bibliography{references}

\end{document}